\newtheorem{thm}{Theorem}[section]
\newtheorem{lemma}[thm]{Lemma}
\newtheorem{cor}[thm]{Corollary}
\newtheorem{prop}[thm]{Proposition}
\theoremstyle{definition}
\newtheorem{definition}[thm]{Definition}
\newtheorem{example}[thm]{Example}
\newtheorem{ex}[thm]{Example}
\newtheorem*{ack}{Acknowledgments}
\theoremstyle{remark}
\newtheorem{rmk}[thm]{Remark}
\newtheorem{com}[thm]{Comment}
\newtheorem*{notation}{Notation}
\numberwithin{equation}{section}
\newcommand{\id}{\mathrm{id}}
\newcommand{\inv}{^{-1}}
\newcommand{\R}{\mathbb{R}}
\renewcommand{\phi}{\varphi}
\newcommand{\eps}{\varepsilon}
\newcommand{\bij}{\xrightarrow{\sim}}
\newcommand{\Grp}{\mathsf{Grp}}
\newcommand{\Set}{\mathsf{Set}}
\newcommand{\Rack}{{\mathsf{Rack}_\ast}}
\newcommand{\Corack}{{\mathsf{Corack}_\ast}}
\newcommand{\Alg}{{\mathsf{CAlg}_k}}
\newcommand{\Algs}{{\mathsf{CAlg}_S}}
\newcommand{\Sch}{{\mathsf{Sch}_k}}
\newcommand{\Schs}{{\mathsf{Sch}_S}}
\newcommand{\Aff}{{\mathsf{Aff}_k}}
\newcommand{\Affs}{{\mathsf{Aff}_S}}
\newcommand{\leib}{{\mathsf{Leib}_k}}
\newcommand{\Rleib}{{\mathsf{RLeib}_k}}
\newcommand{\op}{^{\mathrm{op}}}
\newcommand{\tr}{\triangleright}
\DeclareMathOperator{\OL}{OL}
\newcommand{\ol}{\mathfrak{ol}}
\DeclareMathOperator{\GL}{GL}
\DeclareMathOperator{\Conj}{Conj}
\renewcommand{\O}{\mathcal{O}}
\DeclareMathOperator{\Der}{Der}
\DeclareMathOperator{\Ad}{Ad}
\DeclareMathOperator{\ad}{ad}
\DeclareMathOperator{\Spec}{Spec}
\newcommand{\C}{\mathcal{C}}
\newcommand{\D}{\mathcal{D}}
\newcommand{\fo}{f_{(1)}}
\newcommand{\ft}{f_{(2)}}
\newcommand{\go}{g_{(1)}}
\newcommand{\gt}{g_{(2)}}
\newcommand{\uo}{u_{(1)}}
\newcommand{\ut}{u_{(2)}}
\newcommand{\vo}{v_{(1)}}
\newcommand{\vt}{v_{(2)}}
\renewcommand{\q}{\mathfrak{q}}
\newcommand{\YB}{\mathsf{YB}}
\newcommand{\nd}{\mathsf{YB}^{\mathrm{nd}}}
\newcommand{\Racks}{\mathsf{Rack}}
\begin{document}
	
	\title[
    From affine algebraic racks to Leibniz algebras
    ]{
    From affine algebraic racks to Leibniz algebras and Yang--Baxter operators
    }
	\author{L\d\uhorn c Ta \orcidlink{0009-0008-6824-186X}}	
	
	\address{Department of Mathematics, University of Pittsburgh, Pittsburgh, Pennsylvania 15260}
	\email{ldt37@pitt.edu}
	
	\subjclass[2020]{Primary 14L99; Secondary 16T25, 17A32, 17B45, 18C40, 57K12}
	
	\keywords{Affine algebraic rack, algebraic group, braid equation, conjugation quandle, Leibniz algebra, nondegenerate Yang--Baxter solution, pointed rack object}
	
	\begin{abstract}
We introduce analogues of algebraic groups called \emph{algebraic racks}, which are pointed rack objects in the category of schemes over a ground field. 
Addressing a problem of Loday \cite{loday}, we construct functors assigning left and right Leibniz algebras to affine algebraic racks. These functors are compatible with closed subracks and ideals, and they recover the Lie algebras of linear algebraic groups (via conjugation quandles) and the Leibniz algebras of algebraic Lie racks. We also study properties of coordinate algebras and Leibniz algebras of affine algebraic racks. 
Finally, we use rack schemes to functorially construct (co-)nondegenerate Yang--Baxter operators in various categories. 
	\end{abstract}
	\maketitle
	
\section{Introduction}
This paper introduces analogues of algebraic groups called \emph{rack schemes} (resp.\ \emph{algebraic racks}), which are racks (resp.\ pointed racks) internal to the category of schemes of finite type over a ground field $k$. We show that affine algebraic racks provide a solution to a version of the well-known \emph{coquecigrue} problem for left and right Leibniz algebras over arbitrary fields $k$ (see Theorem \ref{thm:main}). We also use rack schemes to construct solutions to the Yang--Baxter equation in the categories of schemes, commutative $k$-algebras, and sets (see Sections \ref{sec:from3} and \ref{subsec:yb-ol}).

\subsubsection{}
Loday's \cite{loday} original version of the \emph{coquecigrue} problem from 1993 seeks algebraic structures on smooth manifolds that differentiate to Leibniz algebras in a way that recovers the Lie group--Lie algebra correspondence. 
Bloh \cite{bloh} introduced Leibniz algebras in 1965 to generalize Lie algebras.

To address the \emph{coquecigrue} problem over $\R$ and $\mathbb{C}$, Kinyon \cite{kinyon} in 2007 introduced \emph{Lie racks}, which are pointed rack objects (see Section \ref{sec:ptd-racks}) in the category of smooth manifolds; see the survey \cite{ongay} for further discussion. \emph{Racks} \cite{fenn} and related structures called \emph{quandles} \citelist{\cite{takasaki}\cite{joyce}\cite{matveev}} were originally introduced as invariants of knots and 3-manifolds; see \citelist{\cite{book}\cite{quandlebook}} for introductions to the theory. 

\subsection{Main results}
Kinyon \cite{kinyon} posed an analogue of the \emph{coquecigrue} problem over arbitrary fields $k$ that replaces Lie groups with algebraic groups. Save for a preliminary report of Ba\v{s}i\'{c} and \v{S}koda \cite{skoda} from 2008, the author is unaware of any work in this direction. 
To address this, we prove the following theorem. Working over a ground field $k$, let $\Rack(\Aff)$, $\leib$, and $\Rleib$ denote the categories of affine algebraic racks, left Leibniz algebras, and right Leibniz algebras, respectively.

\begin{thm}\label{thm:main}
    Let $Q=\Spec(A)$ be an affine algebraic rack with unit $e\in Q$, and let $\q\coloneq \Der_k(A,k)$ be the vector space of $k$-derivations of $A$. Define two Leibniz brackets $[\cdot,\cdot],\{\cdot,\cdot\}\colon\q^2\to\q$ by convolution with respect to the corack operations $(\nabla,\nabla\inv)\coloneq(\tr^\sharp,(\tr\inv)^\sharp)$ of $A$ (see Section \ref{sec:nabla}):
    \[
    [D,E]\coloneq (D\otimes E)\circ\nabla,\qquad \{D,E\}\coloneq (D\otimes E)\circ\nabla\inv.
    \]
    Then the assignments $Q\mapsto(\q,[\cdot,\cdot])$ (resp.\ $Q\mapsto(\q,\{\cdot,\cdot\})$) and $\phi\mapsto (d\phi)_e$ define a covariant functor $\Rack(\Aff)\to\leib$ (resp.\ $\Rack(\Aff)\to\Rleib$).
    
    Furthermore, if $Q=\Conj(G)$ is the conjugation quandle scheme of an affine algebraic group $G$, then $(\q,[\cdot,\cdot])$ is the Lie algebra of $G$. On the other hand, if $k=\R$ or $\mathbb{C}$ and $Q(k)$ is a Lie rack, then $(\q,[\cdot,\cdot])$ is the Leibniz algebra of $Q(k)$ as a Lie rack.
\end{thm}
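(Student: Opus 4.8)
The plan is to push everything through identities satisfied by the corack operation $\nabla\colon A\to A\otimes A$, which is built from the comorphism $\tr^\#\colon A\to A\otimes A$ of the rack operation (a $k$-algebra homomorphism), and then to probe those identities with tuples of derivations at $e$. Write $\epsilon\colon A\to k$ for the augmentation attached to the unit $e$, so that $\q=\Der_k(A,k)$ is exactly the space of $\epsilon$-derivations, i.e. the tangent space $T_eQ$; by the construction of $\nabla$ in Section~\ref{sec:nabla} one has $(D\otimes E)\nabla=(D\otimes E)\tr^\#$ for all $D,E\in\q$, so I may compute with $\tr^\#$ throughout. The two pointed-rack axioms $e\tr x=x$ and $x\tr e=e$ dualize to $(\epsilon\otimes\id)\tr^\#=\id$ and $(\id\otimes\epsilon)\tr^\#=\epsilon(\cdot)\,1_A$, and these are the only structural facts I need. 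To see that $[D,E]=(D\otimes E)\circ\nabla$ lands in $\q$, I would expand $[D,E](fg)$ using that $\tr^\#$ is an algebra map and that $D,E$ are $\epsilon$-derivations; this yields four terms, of which two assemble into the Leibniz rule $[D,E](f)\,\epsilon(g)+\epsilon(f)\,[D,E](g)$, while the two remaining cross terms vanish because one leg of each is forced to be $D(1)=0$ or $E(1)=0$ by the two dualized axioms.

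The Leibniz identity is the computational heart. I would dualize self-distributivity $a\tr(b\tr c)=(a\tr b)\tr(a\tr c)$ into the equality of the two comorphisms $A\to A^{\otimes3}$, namely $(\id\otimes\tr^\#)\tr^\#=m_{13}\,(\tr^\#\otimes\tr^\#)\,\tr^\#$, where $m_{13}$ multiplies the two copies of the first leg (the duplicated $a$) and reorders the result to $(a,b,c)$. Applying $D\otimes E\otimes F$ to the left-hand side and contracting the last two legs first gives $(D\otimes[E,F])\tr^\#=[D,[E,F]]$. Applying it to the right-hand side, the $\epsilon$-legs produced both by the derivation rule on the merged first leg and by the duplication collapse through the same two pointed-rack identities, leaving precisely $([D,E]\otimes F)\tr^\#+(E\otimes[D,F])\tr^\#=[[D,E],F]+[E,[D,F]]$. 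Equating the two sides is the Leibniz identity. I expect the main obstacle to be exactly this bookkeeping: tracking which comultiplication leg is hit by which derivation and confirming that every $\epsilon$-leg collapses through one of the two axioms in the way indicated.

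Functoriality is then formal. A morphism of pointed racks $\phi\colon Q\to R$ with $R=\Spec(B)$ dualizes to a morphism $\phi^\#\colon B\to A$ of corack algebras, so it intertwines the corack operations, $\nabla_A\circ\phi^\#=(\phi^\#\otimes\phi^\#)\circ\nabla_B$, and preserves augmentations; hence $(d\phi)_e(D)=D\circ\phi^\#$ carries $\epsilon_A$-derivations to $\epsilon_B$-derivations. The chain $(d\phi)_e[D,E]=(D\otimes E)\nabla_A\phi^\#=(D\otimes E)(\phi^\#\otimes\phi^\#)\nabla_B=[(d\phi)_eD,(d\phi)_eE]$ shows it is a morphism of Leibniz algebras, and $(d\,\id)_e=\id$ together with $(d(\psi\phi))_e=(d\psi)_e(d\phi)_e$ is immediate, yielding the covariant functor $\Rack(\Aff)\to\leib$.

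For the two compatibility claims I would argue separately. If $G$ is affine algebraic with Hopf algebra $A=\O(G)$ and antipode $S$, and $Q=\Conj(G)$, then the conjugation comorphism is $c^\#(f)=\sum\fo\,S(f_{(3)})\otimes\ft$, so for $X,Y\in\q$ a direct computation gives $(X\otimes Y)c^\#=X*Y-Y*X$, the commutator for the convolution product dual to $\Delta$; the antipode leg supplies the correct sign through the identity $X\circ S=-X$ on the augmentation ideal, and this commutator is the Lie bracket on $\operatorname{Lie}(G)=T_eG$. For the second claim, over $k=\R$ or $\mathbb{C}$ with $Q(k)$ a Lie rack, both the algebraic bracket $(D\otimes E)\nabla$ and Kinyon's analytic bracket are realized as the mixed second derivative $\partial_a\partial_b\big|_{(e,e)}(a\tr b)$ of the rack operation; the remaining task is to identify the algebraic tangent space $\Der_k(A,k)$ with its $\nabla$-bracket and the smooth tangent space $T_eQ(k)$ with its differentiated bracket. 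Comparing the algebraic and analytic differentials of one and the same morphism of varieties is where I expect this last part to demand the most care.
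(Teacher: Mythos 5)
Your proposal is correct, and its computational core --- deriving the (left) Leibniz identity by applying $X\otimes Y\otimes Z$ to both sides of the co-left-distributivity identity \ref{ax:cold} and collapsing the $\eps$-legs via \ref{ax:co-fixing} and \ref{ax:co-fixed} --- is exactly the paper's argument (Lemmas \ref{lem:sums} and \ref{lem:s1} and Proposition \ref{prop:jacobi}), as is your treatment of functoriality (Proposition \ref{prop:functor}). Where you genuinely diverge is in bypassing the adjoint representation: the paper devotes Section \ref{sec:convo} to lifting a derivation $D$ to a $k[\delta]$-point $\partial D$, computing $L_{\partial D}^\sharp$ on $A[\delta]$, and identifying $[D,E]$ with $\ad_D(E)$ (Proposition \ref{prop:adj}); closure of $\q$ under $[\cdot,\cdot]$ then falls out because $\ad_D\in\operatorname{End}(\q)$, and the $\Conj(G)$ recovery follows because $\Ad$ and $\ad$ for $\Conj(G)$ coincide with those of $G$. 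You instead verify closure by a direct Leibniz-rule expansion of $[D,E](fg)$ (using that $\nabla$ is an algebra map, with the cross terms killed by co-fixedness and $D(1)=0$), and you prove the $\Conj(G)$ case by the direct Hopf-algebra computation $\nabla f=\sum \fo\, S(f_{(3)})\otimes \ft$ and $X\circ S=-X$, which is precisely the ``alternative, tedious'' route the paper relegates to a footnote of Remark \ref{rmk:recover}. Your route is more elementary and self-contained but forfeits the explicit formula $\ad_D=[D,\cdot]$, which the paper reuses later (e.g.\ Propositions \ref{prop:ideal} and \ref{prop:inv} and Corollary \ref{cor:center}); note also that your final step --- matching the algebraic and analytic differentials for the Lie-rack comparison, which you rightly flag as delicate --- is handled no more explicitly in the paper itself, which simply observes that its $\Ad$ and $\ad$ coincide with Kinyon's, so you are not missing anything the paper supplies.
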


\begin{proof}
    For $[\cdot,\cdot]$, we prove the theorem piecemeal in Propositions \ref{prop:adj}, \ref{prop:jacobi}, and \ref{prop:functor} and Remark \ref{rmk:recover}. 
    The proof for $\{\cdot,\cdot\}$ is similar.
\end{proof}

\subsubsection{}
Theorem \ref{thm:main} is an algebraic analogue of a result of Kinyon's result for Lie racks in \cite{kinyon}*{Thm.\ 3.4}. 
Comparatively, two advantages of Theorem \ref{thm:main} are that it holds for all fields $k$, equips $\q$ with both left and right Leibniz brackets, and gives explicit formulas for both brackets. Like in Kinyon's construction, the left Leibniz bracket $[\cdot,\cdot]$ agrees with the derivative of the \emph{adjoint representation} of $Q$ at the identity in the sense of Proposition \ref{prop:adj}. 

The last part of Theorem \ref{thm:main} states that the usual functors sending affine algebraic groups to their Lie algebras (see \cite{waterhouse}*{Sec.\ 12.2}) and affine algebraic Lie racks over $\R$ or $\mathbb{C}$ to their (left) Leibniz algebras (see \cite{kinyon}) factor through the functor $\Rack(\Aff)\to\leib$. As racks and quandles have enjoyed increasing interest in quantum algebra and algebraic geometry in the past decade, this paper lays the foundations for what we expect will be a very rich area of study; cf.\ Section \ref{sec:open}.

\subsubsection{}
As an application of rack schemes over commutative rings $S$, we also construct solutions to the Yang--Baxter equations in various categories; see Sections \ref{sec:from3} and \ref{subsec:yb-ol}. Our constructions employ a fully faithful functor from the category of rack schemes to the category of \emph{Yang--Baxter schemes} (that is, braided objects in the category of schemes over $S$); see Theorem \ref{thm:yb}. 

This functor's action on objects originally appeared in \cite{crans} and is a more user-friendly specialization of a construction from \citelist{\cite{carter}\cite{lebed}\cite{guccione}}. See Comment \ref{com:yb-motivations} for further discussion and references.

\subsection{Literature review}
It appears that the only existing work on the algebraic \emph{coquecigrue} problem over arbitrary fields $k$ is a preliminary report of unpublished work of Ba\v{s}i\'{c} and \v{S}koda \cite{skoda} from 2008 in the case that $\operatorname{char}(k)=0$. On the other hand, there is a rich literature on the \emph{coquecigrue} problem for smooth manifolds over $\R$ and $\mathbb{C}$. See \cite{ongay} for a discussion of the literature up to 2014.

Although the first work on the \emph{coquecigrue} problem is due to Datuashvili \cite{Datuashvili} in 2004, most modern approaches to the problem employ Lie racks, which Kinyon \cite{kinyon} introduced in 2007 in response to the problem. 
For example, Covez \cite{covez} in 2013 used \emph{local augmented Lie racks} to provide an analogue of Lie's third theorem for Leibniz algebras over $\R$ with the caveat that a rack structure is only defined in a neighborhood of the identity. By contrast, Bordemann and Wagemann \cite{bordemann} in 2017 introduced a non-functorial way to integrate Leibniz algebras over $\R$ into Lie racks. 

Within the last decade, various authors have classified special types of Lie racks \citelist{\cite{abchir2}\cite{benayadi}\cite{dibartolo}}. These include Lie racks corresponding to certain nilpotent Leibniz algebras \citelist{\cite{larosa}\cite{larosa2}} and symmetric Leibniz algebras \cite{abchir}, as well as certain Lie racks that integrate to Lie algebras \cite{larosa}*{Thm.\ 4.6}. For further work on the \emph{coquecigrue} problem, see \citelist{\cite{alexandre}\cite{dherin}}. For non-rack-theoretic approaches, see \citelist{\cite{smith}\cite{rodriguez}\cite{uslu}}.

\subsubsection{}
Although there does not appear to be existing literature on rack schemes, we note that quandle varieties and abstract racks have enjoyed increasing attention in algebraic geometry in the past decade. In particular, quandle varieties provide algebro-geometric analogues of Riemannian symmetric spaces \citelist{\cite{takahashi}\cite{takahashi2}} with applications to knot theory \cite{agl} and geometric invariant theory \cite{fan}. 

Abstract racks and quandles provide invariants of not only knots and 3-manifolds but also various objects of study in arithmetic algebraic geometry. For example, quandles provide invariants of arithmetic schemes \cite{takahashi1} and categorical constructions of Hurwitz spaces \citelist{\cite{bianchi}\cite{bianchi2}}. Racks have similar applications in the theory of Hurwitz spaces and algebraic function fields \citelist{\cite{randal}\cite{shusterman}\cite{landesman}\cite{ellenberg}}. The increasing attention racks and quandles have received in algebraic geometry provides another motivation for developing a modern scheme-theoretic treatment of algebraic racks. 

\subsubsection{}
Rack schemes functorially induce rich families of solutions to the Yang--Baxter equations. 
The first algebro-geometric approaches to Yang--Baxter operators are due to Etingof \cite{etingof2} in 2003, who studied nondegenerate braided sets induced by complex Yang--Baxter varieties with birational morphisms. These were later studied in \citelist{\cite{adler}\cite{suris}\cite{frieden}}. Further connections between algebraic geometry and the Yang--Baxter equations  were studied in \citelist{\cite{gorbunov}\cite{inoue}}. 
After this paper was posted to arXiv, Ma, Zhang, and Liu \cite{braces} gave examples of affine Yang--Baxter schemes induced by affine skew braces.

\subsection{Notation and conventions}
    Let $k$ be a field, and let $S$ be a commutative ring.
    By a \emph{scheme}, we mean a scheme of finite type over $k$ or $S$. Unless we are specifically discussing Leibniz algebras, by \emph{$k$-algebras} we mean finitely generated commutative associative algebras over $k$, and similarly for \emph{$S$-algebras}. Denote the $k$-algebra of \emph{dual numbers} by $k[\delta]\coloneq k[\delta]/(\delta^2)$. Given a morphism of affine schemes $\phi\colon\Spec(B)\to\Spec(A)$, let $\phi^\sharp\colon A\to B$ denote the induced ring homomorphism. 
    
    Let $\Schs$, $\Affs$, $\Algs$, $\leib$, $\Rleib$, $\Grp$, $\Grp(\Sch)$, $\Set$, $\Racks$, and $\Rack$ denote the categories of schemes over $S$, affine schemes over $S$, $S$-algebras, left Leibniz algebras over $k$, right Leibniz algebras over $k$, groups, algebraic groups over $k$, sets, racks, and pointed racks, respectively. 
    
    Given an object $X$ in a monoidal category $(\C,\otimes)$, let $X^n\coloneq \bigotimes^n_{i=1}X$ denote the $n$-fold tensor product of $X$. If $(\C,\otimes)$ is braided monoidal, let $\tau$ denote the braiding $\tau\colon X^2\bij X^2$. If $(\C,\times)$ is cartesian monoidal, then let $\pi_1,\pi_2\colon X^2\to X$ denote the projections (so, for example, $\tau=(\pi_2,\pi_1)$).

\subsection{Organization of the paper} The first few sections of this paper are dedicated to preliminaries.
In Section \ref{sec:prelims}, we recall the definitions of Leibniz algebras and pointed racks and give several important examples.

In Section \ref{sec:ptd-racks}, we introduce pointed rack objects internal to cartesian monoidal categories.

In Section \ref{sec:alg-racks}, we introduce the categories of algebraic racks $\Rack(\Sch)$ and affine algebraic racks $\Rack(\Aff)$. We also discuss the functor of points approach for algebraic racks.

In Section \ref{sec:corack}, we introduce the category of (commutative) corack algebras $\Corack(\Alg)$.

\subsubsection{} 
Looking toward Theorem \ref{thm:main}, we specialize our attention to the category of affine algebraic racks $\Rack(\Aff)$ over a ground field $k$.
In Section \ref{sec:affine}, we discuss the anti-equivalence between the categories $\Rack(\Aff)$ and $\Corack(\Alg)$. As examples, we compute the corack algebras corresponding to three classes of affine algebraic racks. We also show that, unlike Hopf algebras of algebraic groups, corack algebras are coassociative or cocommutative only in trivial cases.

In Section \ref{sec:pre-pf}, we discuss our approach to proving Theorem \ref{thm:main}. In particular, we define adjoint representations $\Ad$, $\ad$ and dual adjoint representations $\Ad\inv,$ $\ad\inv$ of affine algebraic racks and their tangent spaces. We also recall various facts about $k$-derivations and tangent spaces.

In Section \ref{sec:pf}, we prove Theorem \ref{thm:main}. A major step in the proof is to show that, in a certain sense, the Leibniz brackets $[\cdot,\cdot]$ and $\{\cdot,\cdot\}$ respectively agree with the maps $\ad$ and $\ad\inv$.

In Section \ref{sec:from}, we provide several results relating the structure of an affine algebraic rack to that of its Leibniz algebra, including sufficient conditions for the latter to be abelian. These results recover the analogous results for Lie algebras of affine algebraic groups.

\subsubsection{}
Next, we discuss how rack schemes $\Racks(\Schs)$ over a commutative ring $S$ induce solutions to the Yang--Baxter equation in $(\Schs,\times_S)$, $(\Set,\times)$, and $(\Algs,\otimes_S)$.
In Section \ref{sec:ybo}, we recall definitions relating to Yang--Baxter operators $\YB(\C)$ internal to a tensor category $(\C,\otimes)$.

In Section \ref{sec:from2}, we show that for every cartesian monoidal category $(\C,\times)$, a certain construction in \citelist{\cite{lebed}\cite{guccione}} defines a fully faithful functor $F\colon \Racks(\C)\to\YB(\C)$. (See Theorem \ref{thm:yb}.)

In Section \ref{sec:from3}, we apply the results of Sections \ref{sec:ybo} and \ref{sec:from2} to $\Schs$ and $\Affs$. In particular, we discuss how rack schemes induce nondegenerate Yang--Baxter operators in $\Schs$ and $\Set$ and co-nondegenerate Yang--Baxter operators in $\Algs$.

\subsubsection{}
In Section \ref{sec:ol}, we illustrate Theorem \ref{thm:main} with a class of affine algebraic racks $\OL_n$ whose Leibniz algebras $\ol_n(k)$ are not generally Lie algebras. To illustrate the constructions in Section \ref{sec:from3}, we also compute the induced Yang--Baxter operators in $\Aff$ and $\Alg$.

In Section \ref{sec:open}, we propose directions for future research.

\begin{ack}
    An early draft of this paper was submitted as the final project for a course on affine algebraic groups taught by Prof.\ Bogdan Ion at the University of Pittsburgh in the fall 2025 semester.
    I thank Prof.\ Ion for helpful comments and suggestions. I also thank Gianmarco La Rosa and Manuel Mancini for helpful clarifications about Lie racks.
    
    This work is supported by the K.\ Leroy Irvis Fellowship at the University of Pittsburgh.
\end{ack}

\section{Preliminaries}\label{sec:prelims}
In this section, we recall the definitions of Leibniz algebras and pointed racks. 
See \cite{leibniz} for an introduction to Leibniz algebras, and see \citelist{\cite{book}\cite{quandlebook}} for general references on racks and quandles. 

\subsection{Leibniz algebras}
We recall the definitions of left and right Leibniz algebras over $k$.

Let $\q$ be a vector space over $k$, and let $[\cdot,\cdot],\{\cdot,\cdot\}\colon \q^2\to\q$ be bilinear mappings. We call $[\cdot,\cdot]$ a \emph{(left) Leibniz bracket} and $(\q,[\cdot,\cdot])$ a \emph{left Leibniz algebra} if, for all $X\in\q$, the map $\ad_X\colon\q\to\q$ defined by $Y\mapsto[X,Y]$ is a derivation of $[\cdot,\cdot]$. That is, the left derivation form of the Jacobi identity
    \[
    [X,[Y,Z]]=[[X,Y],Z]+[Y,[X,Z]]
    \]
holds for all $X,Y,Z\in\q$. 

Similarly, we call $\{\cdot,\cdot\}$ a \emph{(right) Leibniz bracket} and $(\q,\{\cdot,\cdot\})$ a \emph{right Leibniz algebra} if for all $X\in\q$, the map $\ad_X\inv\colon\q\to\q$ defined by $Y\mapsto\{Y,X\}$ is a derivation of~$\{\cdot,\cdot\}$:
\[
\{\{X,Y\},Z\}=\{\{X,Z\},Y\}+\{X,\{Y,Z\}\}.
\]

If $(\q,[\cdot,\cdot]_\q)$ and $(\mathfrak{r},[\cdot,\cdot]_\mathfrak{r})$ are left Leibniz algebras, then a $k$-linear map $\phi\colon \q\to\mathfrak{r}$ is called a \emph{(left) Leibniz algebra homomorphism} if it preserves the respective Leibniz brackets. That is,
\[
\phi([X,Y]_\q)=[\phi(X),\phi(Y)]_\mathfrak{r}
\]
for all $X,Y\in\q$. \emph{Right Leibniz algebra homomorphisms} are defined identically.

\subsubsection{}
For example, a Lie algebra is precisely a left or right Leibniz algebra whose Leibniz bracket is alternating; that is, $[X,X]=0$ for all $X\in\q$.

There are many examples of left Leibniz algebras that are not Lie algebras. Of particular interest in this paper is the following construction of Weinstein \cite{weinstein} in 2000; see \cite{omni}. 
    Given a vector space $V$, consider the vector space $\mathfrak{gl}(V)\oplus V$. (This is the underlying vector space of the Lie algebra $\mathfrak{aff}(V)$ of the linear algebraic group $\mathrm{Aff}(V)$ of affine transformations of $V$.) 
    
    Define a left Leibniz bracket on $\mathfrak{gl}(V)\oplus V$ by
    \begin{equation}\label{eq:olnk}
        [(X,v),(Y,w)]\coloneq (XY-YX,Xw).
    \end{equation}
    Then $\ol(V)\coloneq(\mathfrak{gl}(V)\oplus V,[\cdot,\cdot])$ is a hemisemidirect product left Leibniz algebra (see \cite{kinyon2}) called an \emph{omni-Lie algebra}. Despite its name, $\ol(V)$ is a Lie algebra if and only if $\dim (V)=0$. (See \cite{weinstein} for the original motivation for this name; cf.\ \cite{omni}*{Prop.\ 3.1}.) If $V=k^n$, then we denote $\ol_n(k)\coloneq\ol(k^n)$.

\subsection{Pointed racks}

To motivate the definition of a pointed rack object, we begin in the set-theoretic setting of ordinary rack and quandle theory. Recall (from, say, \cite{kinyon}) that a \emph{pointed rack} consists of a set $Q$, a distinguished element $e\in Q$ called the \emph{unit} or \emph{identity}, and two binary operations $\tr,\tr\inv\colon Q^2\to Q$ satisfying the following five axioms\footnote{Some authors swap the roles of $\tr$ and $\tr\inv$ in the definition of a rack. Of course, this choice is purely notational. This paper adopts the convention that $\tr$ is left-distributive because this seems to be the more common convention in the literature on Lie racks and quandle varieties.} for all elements $x,y,z\in Q$:
    \begin{enumerate}[(Q1)]
        \item (Invertability) $(x\tr y)\tr\inv x=y=x\tr(y\tr\inv x)$.
        \item\label{ax:left} (Left-distributivity) $x\tr(y\tr z)=(x\tr y)\tr (x\tr z)$.
        \item (Right-distributivity) $(x\tr\inv y)\tr\inv z=(x\tr\inv z)\tr\inv(y\tr\inv z)$.
        \item\label{ax:fixing} (Fixing) $e\tr x=x$.
        \item\label{ax:fixed} (Fixedness) $x\tr e=e$.
    \end{enumerate}
If $x\tr(x\tr y)=y$ (equivalently, $x\tr y= y\tr\inv x$) for all $x,y\in Q$, then we say that $Q$ is \emph{involutory}. If $x\tr x=x$ for all $x\in Q$, then we say that $Q$ is \emph{idempotent} or a \emph{(pointed) quandle}.

Given pointed racks $Q$ and $R$, a function $\phi\colon Q\to R$ is called a \emph{pointed rack homomorphism} if
    \[
    \phi(x\tr_Q y)=\phi(x)\tr_R\phi(y),\qquad \phi(e_Q)=e_R
    \]
    for all $x,y\in Q$. (In this case, $\phi$ also preserves the right-distributive operations $\tr_Q\inv$ and $\tr_R\inv$. Verifying this fact is easiest when using the notation $L_x^{\pm 1}$ in Section \ref{subsec:lx}.) We can similarly define \emph{racks} and \emph{rack homomorphisms} by omitting every axiom that involves the unit $e$; cf.\ \cite{fenn}.

Note that every pointed rack $Q$ also satisfies the following identities for all $x\in Q$:
\begin{enumerate}[(Q1')]
\setcounter{enumi}{3}
        \item\label{ax:inv-fixing} (Inverse fixing) $x\tr\inv e=x$.
        \item\label{ax:inv-fixed} (Inverse fixedness) $e\tr\inv x=e$.
    \end{enumerate}

\subsubsection{}
For example, every nonempty set $Q$ and choice of unit $e\in Q$ can be made into a \emph{trivial quandle} by defining $x\tr y\coloneq y\eqcolon y\tr\inv x$ for all $x,y\in Q$. A nontrivial example of a pointed quandle, and indeed the original motivating example in \citelist{\cite{joyce}\cite{matveev}}, is the \emph{conjugation quandle} $\Conj(G)$ of a group $G$. The unit of $\Conj(G)$ is the identity of $G$, and the binary operations of $\Conj(G)$ are defined by
\[
g\tr h\coloneq  ghg\inv,\qquad h\tr\inv g\coloneq g\inv hg
\]
for all elements $g,h\in G$. Note that $\Conj(G)$ is involutory if and only if $g^2\in Z(G)$ for all $g\in G$.

\subsubsection{}\label{subsec:lx}
Before we give an example of a pointed rack that is not a quandle, we note that in order to construct a pointed rack $Q$ with identity element $e$, it is enough to define a binary operation $\tr\colon Q^2\to Q$ satisfying axioms \ref{ax:left}, \ref{ax:fixing}, and \ref{ax:fixed} such that for all $x\in Q$, the function $L_x\colon Q\to Q$ defined by $y\mapsto x\tr y$ is bijective. This data completely determines the right-distributive binary operation $\tr\inv\colon Q^2\to Q$. 

In fact, for all elements $x\in Q$, the function $L_x$ is a rack automorphism of $Q$ called \emph{left multiplication by $x$}. The inverse function $L_x\inv$ is given by $y\mapsto y\tr\inv x$. Since $L_x$ is a conjugation map in the case that $Q=\Conj(G)$ is a conjugation quandle, some authors refer to $L_x$ as an \emph{inner automorphism} of $Q$. See \cite{fenn} for further discussion.

\subsubsection{}
The following example is due to Kinyon \cite{kinyon}, though the notation is our own. Let $G$ be a group with identity element $e_G$, let $V$ be a $G$-module, and let $e\coloneq (e_G,\mathbf{0})\in G\times V$. Define a pointed rack structure on the set $\OL(G,V)\coloneq G\times V$ by
    \[
    (A,v)\tr(B,w)\coloneq (ABA\inv, Aw).
    \]
By the above discussion, $\OL(G,V)$ is a pointed rack called a \emph{linear rack}. Note that although $\OL(G,V)$ is not generally a quandle, $\Conj(G)$ embeds into $\OL(G,V)$ as a subquandle.

In particular, the underlying set of $\OL_n(k)\coloneq\OL(\GL_n(k),k^n)$ is $\mathrm{Aff}_n(k)$, the group of affine transformations of $k^n$. To distinguish $\OL_n(k)$ from the conjugation quandle $\Conj(\mathrm{Aff}_n(k))$, we dub $\OL_n(k)$ an \emph{omni-linear rack}. (See Proposition \ref{prop:leib-of-ol} for the motivation behind this specific name and notation.) Note that $\OL_n(k)$ is a quandle if and only if $n=0$.

\subsubsection{}
Finally, recall from \cite{ryder} that if $Q$ is a pointed rack and $ R\subseteq Q$ is a subset containing $e$, then $R$ is called a \emph{(pointed) subrack} if $L_r(R)=R$ for all $r\in R$ and a \emph{left ideal}\footnote{One may similarly define a \emph{right ideal} of $Q$ by requiring that $r\tr x\in R$ for all $r\in R$ and $x\in Q$. However, as observed in \cite{elhamdadi}*{Prop.\ 4.6}, this condition implies that $R=Q$. Indeed, $e\in R$, so $x=e\tr x \in R$ for all $x\in Q$.} or \emph{normal subrack} of $Q$ if $L_x(R)=R$ for all $x\in Q$.\footnote{Some papers, such as \cite{elhamdadi}, call our notion of a left ideal a ``right ideal.'' We deviate from this terminology to more closely analogize the definition of left ideals in rings.

Moreover, \cite{elhamdadi} and some other papers only require that $L_r(R)\subseteq R$ and $L_x(R)\subseteq R$ in their definitions of subracks and left ideals. These weaker conditions do not always guarantee that $R$ a rack; see \cite{kamada}*{Sec.\ 2} for counterexamples.} In particular, all left ideals are pointed subracks; the converse is not true in general. 

For example, the pointed subquandles of a conjugation quandle $\Conj(G)$ are precisely the subsets $\{e\}\subseteq R\subseteq Q$ that are closed under the action of the subgroup $\langle R\rangle\leq G$ by conjugation, while the left ideals of $\Conj(G)$ are precisely unions of $\{e\}$ with other conjugacy classes of $G$. In particular, all subgroups of $G$ are pointed subquandles of $\Conj(G)$, and all normal subgroups are left ideals.

\section{Pointed rack objects}\label{sec:ptd-racks}
In Section \ref{sec:alg-racks}, we will define algebraic racks as \emph{pointed rack objects} internal to $\Sch$. To that end, this section introduces pointed rack objects in cartesian monoidal categories, following the construction of rack objects in \cite{rack-roll} (cf.\ \citelist{\cite{carter}\cite{crans}\cite{lebed}\cite{guccione}}). 

\subsection{}
Since pointed racks are an algebraic theory in the sense of universal algebra, we can define \emph{pointed rack objects} or \emph{internal pointed racks} in cartesian monoidal categories in the same way that one defines internal groups or internal monoids, for example. 

Our notion of pointed rack objects is based on \emph{unital rack objects}, which Grøsfjeld \cite{rack-roll} introduced in 2021. The difference is that we require a specific choice of unit $e$ and require that morphisms preserve these choices, while unital rack objects do not single out any one candidate for $e$. Also, we use the word ``pointed'' over ``unital'' to align with the convention in the literature on Lie racks.

\subsection{}
In the following, recall that $\pi_1,\pi_2\colon Q^2\to Q$ denote the projection morphisms, and recall that $\tau\colon Q^2\bij Q^2$ denotes the transposition $\tau=(\pi_2,\pi_1)$.

\begin{definition}[Cf.\ \cite{rack-roll}]\label{def:rack-ob}
    Let $\C$ be a cartesian monoidal category with terminal object $*$. A \emph{pointed rack object} or \emph{internal pointed rack} in $\C$ is a quadruple $(Q,e,\tr,\tr\inv)$ (sometimes denoted only by $(Q,\tr)$ or $Q$) where
    \begin{itemize}
        \item $Q$ is an object in $\C$,
        \item $\tr$ and $\tr\inv$ are morphisms $Q^2\to Q$ called the \emph{rack operations} of $Q$, and
        \item $e\colon *\to Q$ is a morphism called the \emph{unit} or \emph{identity} of $Q$
    \end{itemize}
    such that the following five diagrams commute:
% https://q.uiver.app/#q=WzAsNCxbMCwwLCJRXjIiXSxbMSwwLCJRXjIiXSxbMCwxLCJRXjIiXSxbMSwxLCJRXjIiXSxbMCwyLCIoXFx0cixcXHBpXzEpIiwyXSxbMCwzLCJcXGlkIl0sWzIsMywiKFxccGlfMixcXHRyXFxpbnYpIiwyXSxbMSwzLCIoXFx0cixcXHBpXzEpIl0sWzAsMSwiKFxccGlfMixcXHRyXFxpbnYpIl1d
\begin{equation}\label{ax:inv}
    \begin{tikzcd}
	{Q^2} & {Q^2} \\
	{Q^2} & {Q^2}
	\arrow["{(\pi_2,\,\tr\inv)}", from=1-1, to=1-2]
	\arrow["{(\tr,\pi_1)}"', from=1-1, to=2-1]
	\arrow["\id", from=1-1, to=2-2]
	\arrow["{(\tr,\pi_1)}", from=1-2, to=2-2]
	\arrow["{(\pi_2,\,\tr\inv)}"', from=2-1, to=2-2]
\end{tikzcd}
\end{equation}

% https://q.uiver.app/#q=WzAsNSxbMCwwLCJRXjMiXSxbMCwxLCJRXjIiXSxbMiwwLCJRXjMiXSxbMiwxLCJRXjIiXSxbMSwxLCJRIl0sWzAsMSwiXFxpZFxcdGltZXNcXHRyIiwyXSxbMSw0LCJcXHRyIl0sWzMsNCwiXFx0ciIsMl0sWzAsMiwiKFxcdHIsXFxwaV8xKVxcdGltZXNcXGlkIl0sWzIsMywiXFxpZFxcdGltZXNcXHRyIl1d
\begin{equation}\label{ax:ld}
    \begin{tikzcd}
	{Q^3} && {Q^3} \\
	{Q^2} & Q & {Q^2}
	\arrow["{(\tr,\,\pi_1)\times\id}", from=1-1, to=1-3]
	\arrow["{\id\times\tr}"', from=1-1, to=2-1]
	\arrow["{\id\times\tr}", from=1-3, to=2-3]
	\arrow["\tr", from=2-1, to=2-2]
	\arrow["\tr"', from=2-3, to=2-2]
\end{tikzcd}
\qquad
% https://q.uiver.app/#q=WzAsNSxbMCwwLCJRXjMiXSxbMCwxLCJRXjIiXSxbMiwwLCJRXjMiXSxbMiwxLCJRXjIiXSxbMSwxLCJRIl0sWzAsMSwiXFx0clxcaW52XFx0aW1lc1xcaWQiLDJdLFsxLDQsIlxcdHJcXGludiJdLFszLDQsIlxcdHJcXGludiIsMl0sWzAsMiwiXFxpZFxcdGltZXMoXFxwaV8yLFxcdHJcXGludikiXSxbMiwzLCJcXHRyXFxpbnZcXHRpbWVzXFxpZCJdXQ==
\begin{tikzcd}
	{Q^3} && {Q^3} \\
	{Q^2} & Q & {Q^2}
	\arrow["{\id\times(\pi_2,\,\tr\inv)}", from=1-1, to=1-3]
	\arrow["{\tr\inv\times\id}"', from=1-1, to=2-1]
	\arrow["{\tr\inv\times\id}", from=1-3, to=2-3]
	\arrow["{\tr\inv}", from=2-1, to=2-2]
	\arrow["{\tr\inv}"', from=2-3, to=2-2]
\end{tikzcd}
\end{equation}
\begin{equation}\label{ax:unit}
    % https://q.uiver.app/#q=WzAsMyxbMCwwLCJcXGFzdFxcdGltZXMgUSJdLFsxLDAsIlFcXHRpbWVzIFEiXSxbMSwxLCJRIl0sWzEsMiwiXFx0ciIsMl0sWzAsMiwiXFxwaV8yIiwyXSxbMCwxLCJlXFx0aW1lc1xcaWQiXV0=
\begin{tikzcd}
	{\ast\times Q} & {Q^2} \\
	& Q
	\arrow["{e\times\id}", from=1-1, to=1-2]
	\arrow["{\pi_2}"', from=1-1, to=2-2]
	\arrow["\tr", from=1-2, to=2-2]
\end{tikzcd}\qquad
% https://q.uiver.app/#q=WzAsNCxbMCwwLCJRXFx0aW1lc1xcYXN0Il0sWzEsMCwiUVxcdGltZXMgUSJdLFsxLDEsIlEiXSxbMCwxLCIqIl0sWzEsMiwiXFx0ciJdLFswLDEsIlxcaWRcXHRpbWVzIGUiXSxbMywyLCJlIiwyXSxbMCwzLCJcXHBpXzIiLDJdXQ==
\begin{tikzcd}
	{Q\times\ast} & {Q\times Q} \\
	{*} & Q
	\arrow["{\id\times e}", from=1-1, to=1-2]
	\arrow["{\pi_2}"', from=1-1, to=2-1]
	\arrow["\tr", from=1-2, to=2-2]
	\arrow["e", from=2-1, to=2-2]
\end{tikzcd}
\end{equation}
A pointed rack object is called
\begin{itemize}
    \item \emph{involutory} if $\tr\circ \tau=\tr\inv$, and
    \item \emph{idempotent} or a \emph{quandle} if $\tr\circ\Delta=\id=\tr\inv\circ\Delta$, where $\Delta\colon Q\to Q^2$ denotes the diagonal morphism $\Delta=(\id,\id)$.
\end{itemize}
\end{definition}

In analogy with the definition of pointed racks, the diagram in \eqref{ax:inv} may be called the \emph{invertability axiom}, the diagrams in \eqref{ax:ld} may respectively be called the \emph{left-distributivity} and \emph{right-distributivity} axioms, and the diagrams in \eqref{ax:unit} may respectively be called the \emph{fixing} and \emph{fixedness} axioms. 

Note that every pointed rack also satisfies diagrams similar to those in \eqref{ax:unit} that generalize the inverse fixing and inverse fixedness identities \ref{ax:inv-fixing} and \ref{ax:inv-fixed}.

\begin{definition}[Cf.\ \cite{rack-roll}]\label{def:rack-mor}
    Let $Q$ and $R$ be pointed rack objects in $\C$. A morphism $\phi\colon Q\to R$ is called a \emph{morphism of pointed rack objects} if the following two diagrams commute:
    % https://q.uiver.app/#q=WzAsNixbMSwwLCJRXFx0aW1lcyBRIl0sWzEsMSwiUlxcdGltZXMgUiJdLFswLDAsIlEiXSxbMCwxLCJSIl0sWzIsMCwiUSJdLFsyLDEsIlIiXSxbMCwxLCJcXHBoaVxcdGltZXNcXHBoaSIsMl0sWzIsMywiXFxwaGkiLDJdLFs0LDUsIlxccGhpIiwyXSxbMCwyLCJcXHRyX1EiLDJdLFsxLDMsIlxcdHJfUiJdLFsxLDUsIlxcdHJfUlxcaW52IiwyXSxbMCw0LCJcXHRyX1FcXGludiJdXQ==
\[\begin{tikzcd}
	Q & {Q^2} & Q \\
	R & {R^2} & R
	\arrow["\phi"', from=1-1, to=2-1]
	\arrow["{\tr_Q}"', from=1-2, to=1-1]
	\arrow["{\tr_Q\inv}", from=1-2, to=1-3]
	\arrow["{\phi\times\phi}"', from=1-2, to=2-2]
	\arrow["\phi", from=1-3, to=2-3]
	\arrow["{\tr_R}"', from=2-2, to=2-1]
	\arrow["{\tr_R\inv}", from=2-2, to=2-3]
\end{tikzcd}\qquad
% https://q.uiver.app/#q=WzAsMyxbMCwwLCJcXGFzdCJdLFsxLDAsIlEiXSxbMSwxLCJSIl0sWzEsMiwiXFxwaGkiXSxbMCwyLCJlX1IiLDJdLFswLDEsImVfUSJdXQ==
\begin{tikzcd}
	\ast & Q \\
	& R
	\arrow["{e_Q}", from=1-1, to=1-2]
	\arrow["{e_R}"', from=1-1, to=2-2]
	\arrow["\phi", from=1-2, to=2-2]
\end{tikzcd}\]
    Let $\Rack(\C)$ denote the category of pointed rack objects and their morphisms in $\C$.
\end{definition}

\begin{ex}
    Pointed racks are pointed rack objects in $\Set$. Morphisms of pointed racks are just pointed rack homomorphisms. That is, $\Rack=\Rack(\Set)$.
\end{ex}

\begin{ex}[\cite{kinyon}]
    \emph{Lie racks} are pointed rack objects in $\mathsf{Diff}$, the category of smooth manifolds. Morphisms of Lie racks are smooth maps whose underlying set-theoretic functions are pointed rack homomorphisms. That is, the category of Lie racks is $\Rack(\mathsf{Diff})$.
\end{ex}

\begin{ex}[\citelist{\cite{elhamdadi}\cite{rack-roll}}]
    \emph{Pointed topological racks} (called ``topological racks with units'' in \cite{elhamdadi}) are pointed rack objects in $\mathsf{Top}$, the category of topological spaces. Morphisms in $\Rack(\mathsf{Top})$ are continuous maps whose underlying set-theoretic functions are pointed rack homomorphisms.
\end{ex}

\subsection{}
As in \cite{rack-roll}, we can also define \emph{rack objects} and \emph{morphisms of rack objects} in cartesian monoidal categories $\C$ by omitting all axioms relating to units $e$ in Definitions \ref{def:rack-ob} and \ref{def:rack-mor}. Let $\Racks(\C)$ denote the category of rack objects in $\C$.

Similarly to Kinyon \cite{kinyon}, we are mainly interested in pointed rack objects $\Rack(\Aff)$ in the category of affine schemes instead of rack objects $\Racks(\Aff)$. This is because the unit $e$ gives us a canonical choice of tangent space $\q\coloneq T_eQ$ on which to define a Leibniz algebra structure.

\subsection{}
As an important consequence of Definitions \ref{def:rack-ob} and \ref{def:rack-mor}, we obtain automorphisms of rack objects that generalize the inner automorphisms of set-theoretic racks. Namely, let $Q$ be a rack object in $\C$, and identify $*\times Q\cong Q\cong Q\times *$. Given a \emph{global element} $x$ of $Q$ (that is, a morphism $x\colon \ast\to Q$), we have automorphisms $L_x,L_x\inv\colon Q\to Q$ defined by
    \begin{equation}\label{eq:lx}
        L_x\coloneq \tr\circ (x\times \id),\qquad L_x\inv\coloneq \tr\inv\circ(\mathord{\id}\times x).
    \end{equation}
Indeed, $L_x$ and $L_x\inv$ are mutually inverse. The former is called \emph{left multiplication by $x$}; cf.\ \cite{fenn}.

\subsection{}
Finally, we generalize the definitions of subracks and left ideals to arbitrary cartesian monoidal categories.

\begin{definition}\label{def:ideal2}
Let $Q$ and $R$ be pointed rack objects in $\C$. We call $R$ a \emph{subrack} of $Q$ if there exists a monomorphism of pointed rack objects $\iota\colon R\hookrightarrow Q$. We call $R$ a \emph{left ideal} of $Q$ if there also exist morphisms $\widetilde{\tr}\colon Q\times R\to R$ and $\widetilde{\tr}\inv\colon R\times Q\to R$ such that the following two diagrams commute:
% https://q.uiver.app/#q=WzAsNSxbMCwwLCJSXFx0aW1lcyBSIl0sWzIsMCwiUVxcdGltZXMgUSJdLFsxLDAsIlFcXHRpbWVzIFIiXSxbMSwxLCJSIl0sWzIsMSwiUSJdLFswLDIsIlxcaW90YVxcdGltZXNcXGlkIiwwLHsic3R5bGUiOnsidGFpbCI6eyJuYW1lIjoiaG9vayIsInNpZGUiOiJ0b3AifX19XSxbMiwxLCJcXGlkXFx0aW1lc1xcaW90YSIsMCx7InN0eWxlIjp7InRhaWwiOnsibmFtZSI6Imhvb2siLCJzaWRlIjoidG9wIn19fV0sWzAsMywiXFx0cl9SIiwyXSxbMiwzLCJcXHdpZGV0aWxkZXtcXHRyfSJdLFsxLDQsIlxcdHJfUSJdLFszLDQsIlxcaW90YSIsMCx7InN0eWxlIjp7InRhaWwiOnsibmFtZSI6Imhvb2siLCJzaWRlIjoidG9wIn19fV1d
\[\begin{tikzcd}
	{R^2} & {Q\times R} & {Q^2} \\
	& R & Q
	\arrow["{\iota\times\id}", hook, from=1-1, to=1-2]
	\arrow["{\tr_R}"', from=1-1, to=2-2]
	\arrow["{\id\times\iota}", hook, from=1-2, to=1-3]
	\arrow["{\widetilde{\tr}}", from=1-2, to=2-2]
	\arrow["{\tr_Q}", from=1-3, to=2-3]
	\arrow["\iota", hook, from=2-2, to=2-3]
\end{tikzcd} \qquad
% https://q.uiver.app/#q=WzAsNSxbMCwwLCJSXFx0aW1lcyBSIl0sWzIsMCwiUVxcdGltZXMgUSJdLFsxLDAsIlJcXHRpbWVzIFEiXSxbMSwxLCJSIl0sWzIsMSwiUSJdLFswLDIsIlxcaWRcXHRpbWVzXFxpb3RhIiwwLHsic3R5bGUiOnsidGFpbCI6eyJuYW1lIjoiaG9vayIsInNpZGUiOiJ0b3AifX19XSxbMiwxLCJcXGlvdGFcXHRpbWVzXFxpZCIsMCx7InN0eWxlIjp7InRhaWwiOnsibmFtZSI6Imhvb2siLCJzaWRlIjoidG9wIn19fV0sWzAsMywiXFx0cl9SXFxpbnYiLDJdLFsyLDMsIlxcd2lkZXRpbGRle1xcdHJ9XFxpbnYiXSxbMSw0LCJcXHRyX1FcXGludiJdLFszLDQsIlxcaW90YSIsMCx7InN0eWxlIjp7InRhaWwiOnsibmFtZSI6Imhvb2siLCJzaWRlIjoidG9wIn19fV1d
\begin{tikzcd}
	{R^2} & {R\times Q} & {Q^2} \\
	& R & Q
	\arrow["{\id\times\iota}", hook, from=1-1, to=1-2]
	\arrow["{\tr_R\inv}"', from=1-1, to=2-2]
	\arrow["{\iota\times\id}", hook, from=1-2, to=1-3]
	\arrow["{\widetilde{\tr}\inv}", from=1-2, to=2-2]
	\arrow["{\tr_Q\inv}", from=1-3, to=2-3]
	\arrow["\iota", hook, from=2-2, to=2-3]
\end{tikzcd}\]\end{definition}

\section{Algebraic racks}\label{sec:alg-racks}
In this section, we introduce the categories of algebraic racks $\Rack(\Sch)$ and affine algebraic racks $\Rack(\Aff)$. These are defined similarly to how the categories of algebraic groups $\Grp(\Sch)$ and affine algebraic groups $\Grp(\Aff)$ are defined. 
To that end, we follow the approach taken to define the latter categories in \citelist{\cite{milne}\cite{waterhouse}}; see these and \cite{borel} for general references on algebraic groups.

\subsection{}
Just as algebraic groups are defined to be group objects in $\Sch$, we define algebraic racks to be pointed rack objects in $\Sch$. 
For the purposes of proving Theorem \ref{thm:main}, we are primarily interested in working over a ground field $k$, but the definitions work when $k$ is replaced with an arbitrary commutative ring $S$.

\begin{definition}
    The categories of \emph{algebraic racks} and \emph{affine algebraic racks} are $\Rack(\Sch)$ and $\Rack(\Aff)$, respectively.
\end{definition}

\begin{definition}
    The categories of \emph{rack schemes} and \emph{affine rack schemes} are $\Racks(\Sch)$ and $\Racks(\Aff)$, respectively.
\end{definition}

More explicitly, an algebraic rack consists of a scheme $(Q,\O_Q)$ and three morphisms of schemes $\tr,\tr\inv\colon Q^2\to Q$ and $e\colon \Spec(k)\to Q$ that make the diagrams in Definition \ref{def:rack-ob} commute. 

\begin{definition}
    Let $Q$ be an algebraic rack. An algebraic subrack $R$ of $Q$ (in the sense of Definition \ref{def:ideal2}) is called \emph{closed} if the monomorphism $\iota\colon R\hookrightarrow Q$ is a closed embedding.
\end{definition}

\subsection{}
In the same way that we can consider every (affine) algebraic group as a (representable) functor $ \Alg\to\Set$ that factors through $\Grp$, we can also consider every (affine) algebraic rack as a (representable) functor $\Alg\to \Set$ that factors through $\Rack$. This is the so-called \emph{functor of points approach} that Grothendieck \cite{grothendieck} introduced in 1973; see \cite{milne}*{Sec.\ 1a} or any other modern algebraic geometry text for details.

\begin{ex}[Cf.\ \cite{takahashi}*{Ex.\ 3.2}]\label{ex:conj}
    Let $G$ be an algebraic group, so for all $k$-algebras $R$, the set of $R$-points $G(R)$ inherits a group structure from $G$. The \emph{conjugation quandle scheme} of $G$ is the algebraic quandle $\Conj(G)\colon\Alg\to\Set$ that sends $R$ to the conjugation quandle $\Conj(G(R))$. 
    
    Since $\Conj(G)$ factors as $\Alg\to\Grp\to\Rack\to\Set$, it follows that $G$ and $\Conj(G)$ share the same underlying scheme; the distinction lies in the morphisms $(m,i,e)$ and $(\tr,\tr\inv,e)$ attached to this scheme. As before, $\Conj(G)$ is involutory if and only if $g^2\in Z(G)$ for all $g\in G$.
\end{ex}

\begin{ex}\label{ex:OLn}
Let $G$ be an algebraic group acting on a scheme $V$. The \emph{linear rack scheme} is the functor $\OL(G,V)\colon \Alg\to\Set$ that sends a $k$-algebra $R$ to the linear rack $\OL(G(R),V(R))$.

    In particular, we call $\OL_n\coloneq \OL(\GL_n,\mathbb{A}^n)$ the \emph{omni-linear rack variety}. (See Proposition \ref{prop:leib-of-ol} for the motivation behind this name and notation.) As a functor, $\OL_n$ sends every $k$-algebra $R$ to the omni-linear rack $\OL(\GL_n(R),R^n)$. Note that $\OL_n$ is an affine algebraic rack variety because $\GL_n$ and $\mathbb{A}^n$ are affine varieties. 
    Indeed, $\OL_n$ shares the same underlying scheme but not the same morphisms as $\mathrm{Aff}_n$, the linear algebraic group of affine transformations of $\mathbb{A}^n$.
\end{ex}

\subsection{}
Although the next example is uninteresting on its own, it is an important obstruction to adapting certain results about algebraic groups for algebraic racks.

\begin{ex}\label{ex:triv-1}
    Let $Q$ be a nonempty scheme. For every $k$-algebra $R$, view the set of $R$-points $Q(R)$ as a trivial quandle. Then $Q$ is an involutory algebraic quandle called a \emph{trivial quandle scheme}.
\end{ex}

\begin{rmk}\label{rmk:triv}
    A well-known theorem of Cartier (see, for example, \cite{milne}*{Thm.\ 3.23}) states that if $\operatorname{char}(k)=0$, then all affine algebraic groups over $k$ are reduced. Another classical result (see, for example, \cite{milne}*{Cor.\ 4.10}) states that every affine algebraic group is isomorphic to an algebraic subgroup of $\GL_n$ for some $n$. 
    
    The analogous claims for algebraic racks are far from true; in fact, Example \ref{ex:triv-1} shows that \emph{every} nonempty scheme can be made into an algebraic rack. Roughly speaking, this suggests that the condition of being an algebraic rack is much more lenient than that of being an algebraic group.
\end{rmk}

If we turn our attention to rack schemes $\Racks(\Sch)$ rather than algebraic racks $\Rack(\Sch)$, then Example \ref{ex:triv-1} generalizes to an even broader class of rack schemes.

\begin{ex}\label{ex:perm}
    Let $Q$ be any scheme, and let $\phi$ be an automorphism of $Q$. Define morphisms $\tr,\tr\inv\colon Q^2\to Q$ by
    \[
    \tr \coloneq \phi\circ\pi_2,\qquad \tr\inv \coloneq \phi\inv\circ\pi_1,
    \]
    where $\pi_1,\pi_2\colon Q^2\to Q$ are the projection morphisms. Then $Q$ is a rack scheme called a \emph{permutation rack scheme}. If $Q$ is nonempty and $\phi=\id_Q$, then we recover Example \ref{ex:triv-1}.
\end{ex}

\section{Corack algebras}\label{sec:corack}

Examples \ref{ex:conj} and \ref{ex:OLn} suggest that in order to understand the coordinate algebra $A$ of an affine algebraic rack $Q=\Spec(A)$, we have to study the pullbacks of the algebraic rack morphisms $\tr$, $\tr\inv$, and $e$. 
To that end, this section introduces \emph{(commutative) corack algebras}, which are to affine algebraic racks as commutative Hopf algebras are to affine algebraic groups (see Proposition \ref{prop:aff-racks}). Accordingly, corack algebras allow us to both state and prove Theorem \ref{thm:main}.

\subsection{}
Just as cogroup objects in cocartesian monoidal categories are formally dual to group objects in cartesian monoidal categories, we can also define a formal dual of pointed rack objects.

\begin{definition}
    Let $\C$ be a cocartesian monoidal category. A \emph{copointed corack object} or \emph{internal copointed corack} in $\C$ is a pointed rack object in the opposite category $\C\op$.
\end{definition}

Morphisms of copointed corack objects in $\C$ are also defined dually to those of pointed rack objects. Let $\Corack(\C)$ denote the category of copointed corack objects in $\C$.

\begin{definition}\label{def:corack}
    The category of \emph{(commutative) corack algebras} is $\Corack(\Alg)$. Its objects and morphisms are called \emph{corack algebras} and \emph{corack algebra homomorphisms}, respectively.
\end{definition}

For examples of corack algebras, see Examples \ref{ex:triv-2}--\ref{ex:ol-corack}.

\subsection{}
To make Definition \ref{def:corack} more concrete, let $A$ be a $k$-algebra with (commutative and associative) multiplication $\mu\colon A\otimes A\to A$, and recall that $\tau$ denotes the transposition $a\otimes b\mapsto b\otimes a$. 

A \emph{corack algebra structure} on $A$ consists of two $k$-algebra homomorphisms $\nabla,\nabla\inv\colon A\to A\otimes A$ called the \emph{corack operations} and a $k$-algebra homomorphism $\eps\colon A\to k$ called the \emph{counit}, \emph{coidentity}, or \emph{augmentation} that satisfy the following five identities induced by the diagrams in \eqref{ax:inv}--\eqref{ax:unit}:
\begin{enumerate}[(C1)]
    \item (Co-invertability) $(\mathord{\id}\otimes \mu)\circ(\tau\otimes\mu)\circ(\mathrm{\id}\otimes\nabla\inv\otimes\id)\circ(\nabla\circ\id)=(\mathord{\id}\otimes \id)=(\mu\otimes\id)\circ(\mu\otimes\tau)\circ(\mathord{\id}\otimes\nabla\otimes\id)\circ(\mathord{\id}\otimes\nabla\inv)$.
    \item\label{ax:cold} (Co-left distributivity) $(\mathord{\id}\otimes\nabla)\circ\nabla=(\mu\otimes\mathord{\id}\otimes\id)\circ(\mathord{\id}\otimes\tau\otimes\id)\circ(\nabla\otimes\nabla)\circ\nabla$.
    \item\label{ax:cord} (Co-right distributivity) $(\nabla\inv\otimes\id)\circ\nabla\inv=(\mathord{\id}\otimes\mathord{\id}\otimes\mu)\circ(\mathord{\id}\otimes\tau\otimes\id)\circ(\nabla\inv\otimes\nabla\inv)\circ\nabla\inv$.
    \item\label{ax:co-fixing} (Co-fixing) $(\eps\otimes \id)\circ\nabla=\id$.
    \item\label{ax:co-fixed} (Co-fixedness) $(\mathord{\id}\otimes \eps)\circ\nabla=\eps$.
\end{enumerate}
We also obtain the following two identities:
\begin{enumerate}[(C1')]
\setcounter{enumi}{3}
    \item\label{ax:co-inv-fixing} (Co-inverse fixing) $(\id\otimes\eps)\circ\nabla\inv=\id$.
    \item\label{ax:co-inv-fixed} (Co-inverse fixedness) $(\eps\otimes\id)\circ\nabla\inv=\eps$.
\end{enumerate}
We identify $k\otimes A\cong A\cong A\otimes k$ in identities \ref{ax:co-fixing}, \ref{ax:co-fixed}, \ref{ax:co-inv-fixing}, and \ref{ax:co-inv-fixed}. We identify $k$ as a $k$-subalgebra of $A$ in identities \ref{ax:co-fixed} and \ref{ax:co-inv-fixed}. 

\subsection{}
Similarly, corack algebra homomorphisms are $k$-algebra homomorphisms that are compatible with the structure morphisms $\nabla,\nabla\inv$, and $\eps$ in the obvious ways. For example (cf.\ Section \ref{sec:nabla}), if $\phi\colon Q\to R$ is a morphism of affine algebraic racks, then the pullback $\phi^\sharp\colon\O(R)\to\O(Q)$ satisfies
\[
    \nabla_Q^{\pm 1}\circ\phi^\sharp=(\phi^\sharp\otimes\phi^\sharp)\circ\nabla_R^{\pm 1},\qquad \eps_Q\circ \phi^\sharp=\eps_R.
    \]

\begin{rmk}
    Since the rack operations $\tr,\tr\inv$ are not assumed to be associative in the rack axioms, the corack operations $\nabla$ and $\nabla\inv$ are not assumed to be coassociative. (In fact, coassociativity only holds in trivial cases; see Proposition \ref{prop:coass}.) In particular, corack algebras are usually not coalgebras; this is why we chose not to denote the corack operations by $\Delta,\Delta\inv$.
\end{rmk}

\begin{notation}
    We use multi-index Sweedler notation for the corack operations $\nabla$ and $\nabla\inv$. For example, if $A$ is a corack algebra and $f\in A$, then we write $\nabla f=\sum\fo\otimes \ft$ and $\nabla \fo=\sum f_{(1)(1)}f_{(1)(2)}$.
    
    Note that non-coassociativity means that the expression $(\nabla\otimes\id)(\nabla f)=\sum f_{(1)(1)}\otimes f_{(1)(2)}\otimes \ft$ generally does not equal $(\mathord{\id}\otimes \nabla)(\nabla f)=\sum\fo\otimes f_{(2)(1)}\otimes f_{(2)(2)}$. This necessitates multi-indices, differing from the way one might use Sweedler notation for comultiplication in coalgebras.
\end{notation}

\begin{com}
    Definition \ref{def:corack} appears to be new. That said, similar algebraic structures called \emph{rack (bi)algebras} have been widely studied; see \cite{bardakov} for a list of references and \cite{alexandre} for applications of rack bialgebras to the theory of Lie racks and Leibniz algebras. 
    
    As the name suggests, rack bialgebras are typically assumed to be coassociative but not associative, while corack algebras are associative but only coassociative in trivial cases (see Proposition \ref{prop:coass}). Also, rack bialgebras are not assumed to carry a second operation $\nabla\inv$. 
    
    To give an analogy from group theory, rack bialgebras $k[Q]$ are to racks $Q$ as group algebras $k[G]$ are to groups $G$, while corack algebras $\O(Q)$ are to affine algebraic racks $Q=\Spec(\O(Q))$ as commutative Hopf algebras $\O(G)$ are to affine algebraic groups $G=\Spec(\O(G))$. We elaborate on the latter analogy in the following section.
\end{com}

\section{Affine algebraic racks}\label{sec:affine}
In the next four sections, we specialize our attention from algebraic racks $\Rack(\Sch)$ to affine algebraic racks $\Rack(\Aff)$. 

\subsection{Relationship with corack algebras}\label{sec:nabla}
The relationship between affine algebraic racks and corack algebras mirrors the relationship between commutative Hopf algebras and affine algebraic groups (see, for example, \cite{waterhouse}*{Sec.\ 1.4}). 
Namely, the morphisms $(\tr,\tr\inv,e)$ of an affine algebraic rack $Q=\Spec(\O(Q))$ induce a canonical corack algebra structure $(\nabla,\nabla\inv,\eps)\coloneq (\tr^\sharp,(\tr\inv)^\sharp,e^\sharp)$ on the coordinate algebra $\O(Q)$. 

In fact, due to the usual equivalence of categories $\Aff\cong(\Alg)\op$, we have the following.
\begin{prop}\label{prop:aff-racks}
    The assignments $Q\mapsto\O(Q)$ and $\phi\mapsto \phi^\sharp$ define an equivalence of categories $\Rack(\Aff)\bij(\Corack(\Alg))\op$.
\end{prop}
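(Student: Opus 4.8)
The plan is to bootstrap the classical anti-equivalence between affine schemes and finitely generated commutative $k$-algebras, observing that it is cartesian monoidal and hence automatically transports internal pointed rack structures. First I would recall that the coordinate-ring functor $\O\colon\Aff\to(\Alg)\op$, $\Spec(A)\mapsto A$, $\phi\mapsto\phi^\sharp$, is a (covariant) equivalence of categories; this is precisely the equivalence $\Aff\cong(\Alg)\op$ already invoked in Section~\ref{sec:nabla}. The key structural observation is that this equivalence is \emph{cartesian monoidal}: it carries the terminal object $\Spec(k)$ of $\Aff$ to $k$, the terminal object of $(\Alg)\op$ (equivalently, the initial object of $\Alg$), and it carries the product $Q\times R$ (fiber product over $\Spec(k)$) to $\O(Q)\otimes_k\O(R)$, which is the coproduct in $\Alg$ and hence the product in $(\Alg)\op$. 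Since these comparison isomorphisms are natural and coherent, $\O$ is a strong monoidal equivalence of cartesian monoidal categories.

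Second, I would invoke the general principle that pointed racks form a finite-product algebraic theory---as already noted at the start of Section~\ref{sec:pro}---so that for any cartesian monoidal category $\C$ the category $\Rack(\C)$ consists of internal models of this theory, defined purely in terms of finite products, the terminal object, the projections, diagonal, and transposition, together with the commutativity of the diagrams in Definition~\ref{def:rack-ob}. A strong monoidal equivalence of cartesian monoidal categories preserves and reflects exactly this data, and therefore induces an equivalence on categories of internal pointed rack objects. Applying this to $\O$ yields an equivalence $\Rack(\Aff)\bij\Rack((\Alg)\op)$.

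Third, I would identify the target. By the definition of copointed corack objects and of their morphisms (dualized from those of pointed rack objects), the category $\Corack(\Alg)$ has the objects of $\Rack((\Alg)\op)$ but with morphisms running in the forward direction of $\Alg$; that is, $(\Corack(\Alg))\op=\Rack((\Alg)\op)$. Tracing the induced functor through this identification shows that a pointed rack object $(Q,e,\tr,\tr\inv)$ is sent to $\O(Q)$ equipped with the corack operations $(\nabla,\nabla\inv,\eps)=(\tr^\sharp,(\tr\inv)^\sharp,e^\sharp)$, and that a morphism $\phi\colon Q\to R$ is sent to $\phi^\sharp\colon\O(R)\to\O(Q)$, exactly as claimed. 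Under this dictionary the five diagrams of Definition~\ref{def:rack-ob} become the five corack identities: for instance, left-distributivity \eqref{ax:ld} dualizes to co-left distributivity \ref{ax:cold}, and the fixing axiom \eqref{ax:unit} to co-fixing \ref{ax:co-fixing}.

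The main obstacle is bookkeeping rather than mathematics: one must confirm the monoidality of $\O$ with the correct coherence and, above all, keep careful track of the two layers of duality (the contravariance of $\O$ and the dualized morphisms defining $\Corack$) so that the single $\op$ lands on the correct side. An alternative to invoking the abstract theory is to verify the equivalence by hand---establishing essential surjectivity by placing the rack structure $(\Spec(\nabla),\Spec(\nabla\inv),\Spec(\eps))$ on $\Spec(A)$ for each corack algebra $A$, and deducing fullness and faithfulness from those of $\O$ together with the structure-compatibility conditions---in which case the only genuine work is checking, via the product-to-tensor dictionary, that each rack diagram is equivalent to its corack identity.
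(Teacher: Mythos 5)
Your proposal is correct and matches the paper's own (very brief) justification: the paper simply observes that the result follows from the standard anti-equivalence $\Aff\cong(\Alg)\op$, citing the general universal-algebraic fact that models of an algebraic theory transport across such an equivalence (Johnstone, Cor.\ 1.2.14) in the comment following the proposition. Your more detailed unpacking of the cartesian monoidality of $\O$ and the identification $(\Corack(\Alg))\op=\Rack((\Alg)\op)$ is exactly the content the paper leaves implicit.
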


\begin{cor}[Cf.\ \cite{waterhouse}*{Sec.\ 2.1}]\label{cor:closed}
    Closed embeddings of affine algebraic racks $\phi\colon R\hookrightarrow Q$ correspond one-to-one to surjective corack algebra homomorphisms $\phi^\sharp\colon \O(Q)\twoheadrightarrow \O(R)$.
\end{cor}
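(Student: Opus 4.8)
The plan is to deduce the corollary directly from the equivalence of categories in Proposition \ref{prop:aff-racks}, together with the standard dictionary between affine schemes and $k$-algebras. By Proposition \ref{prop:aff-racks}, the contravariant functor $Q\mapsto\O(Q)$, $\phi\mapsto\phi^\sharp$ is an equivalence $\Rack(\Aff)\bij(\Corack(\Alg))\op$; in particular it is fully faithful, so morphisms of affine algebraic racks $\phi\colon R\to Q$ are in natural bijection with corack algebra homomorphisms $\phi^\sharp\colon\O(Q)\to\O(R)$. It therefore suffices to show that, under this bijection, $\phi$ is a closed embedding precisely when $\phi^\sharp$ is surjective.

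First I would observe that the functor of Proposition \ref{prop:aff-racks} is compatible with the usual anti-equivalence $\Aff\cong(\Alg)\op$, since both send $\phi$ to $\phi^\sharp$ on underlying objects and morphisms. Hence whether a morphism of affine algebraic racks is a closed embedding is a property of its underlying morphism of affine schemes. The key input is then the classical fact (used for affine algebraic groups in \cite{waterhouse}*{Sec.\ 2.1}, and valid in any algebraic geometry text) that a morphism of affine schemes $\Spec(\O(R))\to\Spec(\O(Q))$ is a closed embedding if and only if the induced ring homomorphism $\O(Q)\to\O(R)$ is surjective.

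Combining these ingredients gives both directions. Given a closed embedding $\phi\colon R\hookrightarrow Q$ of affine algebraic racks, the pullback $\phi^\sharp$ is a corack algebra homomorphism by Proposition \ref{prop:aff-racks} and is surjective by the scheme-theoretic fact, so it is a surjective corack algebra homomorphism. Conversely, a surjective corack algebra homomorphism $\O(Q)\twoheadrightarrow\O(R)$ corresponds under the equivalence to a morphism of affine algebraic racks $R\to Q$ whose underlying morphism of schemes is a closed embedding; since closed embeddings of schemes are monomorphisms and the forgetful functor $\Rack(\Aff)\to\Aff$ is faithful, this morphism is automatically a monomorphism of pointed rack objects and thus exhibits $R$ as a closed algebraic subrack of $Q$. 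These two assignments are mutually inverse because the functor of Proposition \ref{prop:aff-racks} is an equivalence, yielding the claimed one-to-one correspondence.

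I do not anticipate a serious obstacle: once Proposition \ref{prop:aff-racks} is available, the only nontrivial ingredient is the classical correspondence between closed embeddings of affine schemes and surjections of coordinate rings, which is not special to the rack setting. The single point requiring a little care is confirming that ``closed embedding of affine algebraic racks'' is genuinely detected on the underlying scheme morphism---that is, that the monomorphism clause in the definition of a (closed) algebraic subrack is automatic for a closed embedding---but this follows from faithfulness of the forgetful functor as noted above.
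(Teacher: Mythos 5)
Your proposal is correct and matches the paper's intended argument: the paper states this corollary without proof as an immediate consequence of Proposition \ref{prop:aff-racks} together with the classical affine-scheme dictionary (closed embeddings $\leftrightarrow$ surjections of coordinate rings), which is exactly the route you take. Your extra remark that the monomorphism clause in Definition \ref{def:ideal2} is automatic---since closed embeddings are monomorphisms in $\Aff$ and the forgetful functor $\Rack(\Aff)\to\Aff$ is faithful---is a correct and worthwhile detail that the paper leaves implicit.
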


\begin{com}
    Since commutative Hopf algebras are the same as cogroup objects in $\Alg$, Proposition \ref{prop:aff-racks} analogizes the fact that the category $\Grp(\Aff)$ of affine group schemes is equivalent to the opposite of the category of commutative Hopf algebras (see, for example, \cite{milne}*{Cor.\ 3.7}). By a general fact from universal algebra, this analogy extends to models of any Lawvere theory $\mathbb{T}$ in $\Aff$ and comodels of $\mathbb{T}$ in $\Alg$; see \cite{johnstone}*{Cor.\ 1.2.14} for details.

    Actually, this universal-algebraic perspective shows that $\Rack(\Sch)$ and $\Rack(\Aff)$ are both closed under fiber products and, in particular, base changes. These are constructed in exactly the same way as for, say, algebraic groups. Indeed, if $\mathbb{T}$ is an Lawvere theory and $\C$ is a concrete category with limits and sifted colimits of a certain shape, then the category of $\mathbb{T}$-models in $\C$ also has limits and sifted colimits of that shape; see \cite{mse}.
\end{com}

\subsection{Worked examples}
We compute the corack algebras of the affine algebraic racks discussed in Examples \ref{ex:conj}--\ref{ex:triv-1}. We will only compute the corack operation $\nabla$ in each example; computing the other corack operation $\nabla\inv$ is similar and left to the reader. As a warm-up, we begin with affine trivial quandle schemes.

\begin{ex}\label{ex:triv-2}
    Let $Q=\Spec(A)$ be any nonempty affine scheme, viewed as a trivial quandle scheme as in Example \ref{ex:triv-1}. Then the corack operation $\nabla$ of $A$ is given by
    \[
    \nabla f(x, y)=f(x\tr y)=f(y)
    \]
    for all functions $f\in A$ and points $x,y\in Q$. Hence,
    \[
    \nabla=1\otimes\id.
    \]
\end{ex}

\begin{rmk}\label{rmk:perm-nabla}
    If $(Q,\tr)$ is an affine rack scheme (as opposed to an affine algebraic rack, which is equipped with a unit $e$), then we can still define $(\nabla,\nabla\inv)\coloneq (\tr^\sharp,(\tr\inv)^\sharp)$. 
    
    In particular, if $(Q,\tr)$ is a permutation rack scheme with respect to $\phi\in\operatorname{Aut}(Q)$ as in Example \ref{ex:perm}, then the same calculation as in Example \ref{ex:triv-2} yields
    \[
    \nabla =1\otimes \phi^\sharp.
    \]
\end{rmk}

\begin{example}\label{ex:corack-conj}
    Let $G=\Spec(A)$ be an affine algebraic group. We compute the corack operation $\nabla$ of $A$ induced by the conjugation quandle scheme $\Conj(G)$.
    
    Recall that the group operations of $G$ induce a Hopf algebra structure $(\Delta,\eps,S)=(m^\sharp,e^\sharp,i^\sharp)$ on $A$; see, for example, \cite{waterhouse}*{Sec.\ 1.4}. (The notation is justified because the counit $\eps$ of $A$ as the Hopf algebra of $G$ coincides with the counit of $A$ as the corack algebra of $\Conj(G)$; both equal $e^\sharp$.) Given a function $\phi\in A$, we use Sweedler notation for the Hopf algebra comultiplication $\Delta$ by writing $\Delta \phi=\sum\phi_{(1)}\otimes \phi_{(2)}$. 
    
    For all functions $f\in A$ and elements $g,h\in G$, we have
    \begin{align*}
        \nabla f(g, h)&= f(g\tr h)\\
        &=f(ghg\inv)\\
        &= \Delta f(gh, g\inv) \\
        &=\sum \fo(gh)\ft(g\inv)\\
        &=\sum \Delta \fo(g, h)(S(\ft))(g)\\
        &=\sum f_{(1)(1)}(g)f_{(1)(2)}(h)(S(\ft))(g)\\
        &=\sum (f_{(1)(1)}S(\ft))(g)f_{(1)(2)}(h).
    \end{align*}
    Since $\Delta$ is coassociative, it follows that
    \[
    \nabla f=\sum \fo S(f_{(3)})\otimes \ft.
    \]
    
    Equivalently, let $\mu\colon A\otimes A\to A$ denote multiplication in $A$, and let $\tau$ denote the transposition $a\otimes b\mapsto b\otimes a$. Then
    \[
    \nabla=(\mu\otimes\id)\circ(\mathord{\id}\otimes S\otimes \id)\circ(\mathord{\id}\otimes\tau)\circ(\Delta\otimes\id)\circ\Delta.
    \]
\end{example}

\begin{example}\label{ex:ol-corack}
    Let $n\geq 0$, and consider the omni-linear rack variety $\OL_n$. As an affine variety, we have $\OL_n=\GL_n\times \mathbb{A}^n$, so the underlying $k$-algebra of the corack algebra $\O(\OL_n)$ is
    \begin{equation}\label{eq:ol-corack}
        \O(\OL_n)=\O(\GL_n)\otimes_k \O(\mathbb{A}_n)= k[s_{11},s_{12},\dots,s_{nn},{\det}\inv]\otimes_k k[t_1,\dots,t_n].
    \end{equation}
    The corack operation $\nabla$ acts on the first tensor factor $\O(\GL_n)$ as in Example \ref{ex:corack-conj}. On the second tensor factor $\O(\mathbb{A}^n)$, we have
    \[
    \nabla t_k((A,v),(B,w))=t_k((A,v)\tr(B,w))=t_k(ABA\inv,Aw)=(Aw)_k=\sum^n_{i=1}a_{ki}w_i
    \]
    for all elements $(A,v),(B,w)\in \OL_n$ and integers $1\leq k\leq n$. That is,
    \begin{equation}\label{eq:ol-nabla}
        \nabla t_k=\sum^n_{i=1}s_{ki}\otimes t_i.
    \end{equation}
\end{example}

\subsection{Properties of corack algebras}

By Proposition \ref{prop:aff-racks}, dualizing any theorem about affine algebraic racks $Q$ yields a theorem about corack algebras $\O(Q)$, and vice versa. 

For example, let $Q=\Spec(A)$ be an affine algebraic rack. Then for all $k$-algebras $R$ and $R$-points $x\in Q(R)$ (thought of as $k$-algebra homomorphisms $x^\sharp\colon A\to R$), the automorphisms $L_x^{\pm 1}\colon Q(R)\bij Q(R)$ from \eqref{eq:lx} induce automorphisms
\begin{align}
L_x^\sharp\colon R\otimes_k A &\bij R\otimes_k A\nonumber\\
    r\otimes f&\mapsto(r\otimes 1)\cdot ((x^\sharp\otimes\id_{A})\circ \nabla)(f)\label{eq:lx-sharp-full}
\end{align}
and
\begin{align}
(L_x\inv)^\sharp\colon A\otimes_k R &\bij A\otimes_k R\nonumber\\
    f\otimes r&\mapsto(1\otimes r)\cdot ((\id_{A}\otimes x^\sharp)\circ \nabla\inv)(f).\label{eq:lx-sharp-full-2}
\end{align}

\subsubsection{}
As another example, we show that corack algebras are not coassociative or cocommutative except in trivial cases. This stands in stark contrast to Hopf algebras of affine algebraic groups. Mirroring that setting, let us call a corack algebra $A$ \emph{coassociative} if $(\mathord{\id}\otimes\nabla)\circ\nabla=(\nabla\otimes\id)\circ\nabla$ and \emph{cocommutative} if $\tau\circ \nabla =\nabla$, where $\tau$ denotes the transposition $a\otimes b\mapsto b\otimes a$.

\begin{prop}\label{prop:coass}
    Let $A$ be a corack algebra. Then $A$ is coassociative if and only if $A$ is isomorphic to a corack algebra of the form computed in Example \ref{ex:triv-2}.
\end{prop}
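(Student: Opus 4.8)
The plan is to transport the statement across the equivalence $\Rack(\Aff)\bij(\Corack(\Alg))\op$ of Proposition \ref{prop:aff-racks} and reduce the forward direction to an elementary fact about set-theoretic racks. Write $Q=\Spec(A)$ for the corresponding affine algebraic rack with operation $\tr$, so that $\nabla=\tr^\sharp$. Unwinding the definitions, the two composites appearing in coassociativity are pullbacks of scheme morphisms $Q\times Q\times Q\to Q$: one checks that $(\mathord{\id}\otimes\nabla)\circ\nabla=(\tr\circ(\mathord{\id}\times\tr))^\sharp$ is the pullback of $(x,y,z)\mapsto x\tr(y\tr z)$, while $(\nabla\otimes\mathord{\id})\circ\nabla=(\tr\circ(\tr\times\mathord{\id}))^\sharp$ is the pullback of $(x,y,z)\mapsto(x\tr y)\tr z$. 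Hence, by the functor of points, $A$ is coassociative if and only if $\tr$ is associative on $R$-points for every $k$-algebra $R$; that is, each pointed rack $Q(R)$ is associative.

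The heart of the matter is then the purely combinatorial claim that \emph{an associative pointed rack is a trivial quandle}. Indeed, combining associativity with the left-distributivity axiom \ref{ax:left} gives
\[
(x\tr y)\tr z=x\tr(y\tr z)=(x\tr y)\tr(x\tr z)
\]
for all $x,y,z$. Setting $y=e$ and applying the fixedness axiom \ref{ax:fixed} (which collapses $x\tr e=e$) turns this into $e\tr z=e\tr(x\tr z)$; the fixing axiom \ref{ax:fixing} then yields $z=x\tr z$ for all $x,z$. Thus $\tr$ is the second projection. Note that this uses only axioms \ref{ax:left}, \ref{ax:fixing}, and \ref{ax:fixed}, not invertability.

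Applying the claim to every $Q(R)$ shows $\tr_R=\pi_2$ for all $R$, whence $\tr=\pi_2$ as morphisms of schemes by the functor of points; equivalently, $Q$ is a trivial quandle scheme in the sense of Example \ref{ex:triv-1}. Dualizing, $\nabla=\pi_2^\sharp=1\otimes\mathord{\id}$, which is exactly the corack algebra computed in Example \ref{ex:triv-2}. The converse is the routine verification that $\nabla=1\otimes\mathord{\id}$ is coassociative: since $\nabla$ is an algebra homomorphism we have $\nabla 1=1\otimes 1$, so both $(\mathord{\id}\otimes\nabla)\circ\nabla$ and $(\nabla\otimes\mathord{\id})\circ\nabla$ send $f$ to $1\otimes 1\otimes f$.

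I do not expect a serious obstacle; the content is concentrated in the combinatorial claim, and everything else is bookkeeping. The step needing the most care is the translation between the algebra-level coassociativity identity and point-level associativity of $\tr$. If one prefers to avoid the functor of points, the same argument dualizes directly on the algebra side: combine coassociativity with co-left distributivity \ref{ax:cold}, then apply the counit $\eps$ in the middle tensor factor (the dual of setting $y=e$) together with co-fixedness \ref{ax:co-fixed} and co-fixing \ref{ax:co-fixing}; there the only delicacy is tracking which tensor slot each map acts on.
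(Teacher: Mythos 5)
Your proposal is correct and follows essentially the same route as the paper: transport the statement across the equivalence of Proposition \ref{prop:aff-racks}, reduce to showing that associativity of $\tr$ forces $x\tr z=z$, and derive $(x\tr y)\tr z=(x\tr y)\tr(x\tr z)$ from associativity combined with left-distributivity. The only difference is the final step: the paper cancels $L_{x\tr y}$ using its bijectivity, whereas you specialize $y=e$ and apply the fixedness and fixing axioms, which is equally valid and has the minor virtue of showing that invertability is not needed.
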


\begin{proof}
    Let $Q\coloneq \Spec(A)$. By Proposition \ref{prop:aff-racks}, $Q$ is an affine algebraic rack, say with left-distributive rack operation $\tr$.  
    Proposition \ref{prop:aff-racks} also makes it sufficient to show that $\tr$ is associative if and only if $Q$ is a trivial quandle scheme, that is, $x\tr z=z$ for all $x,z\in Q$. Certainly, $\tr$ is associative if the latter condition holds. Conversely, if $\tr$ is associative, then left-distributivity \eqref{ax:ld} implies that
    \[
    L_{x\tr y}(z)=(x\tr y)\tr z=x\tr (y\tr z)=(x\tr y)\tr (x\tr z)=L_{x\tr y}(x\tr z).
    \]
    for all $x,y,z\in Q$. Since $L_{x\tr y}$ is invertible (see the discussion around \eqref{eq:lx}), the claim follows.
\end{proof}

\begin{prop}\label{prop:cocomm}
    Let $A$ be a corack algebra. The following are equivalent:
    \begin{enumerate}[(1)]
        \item\label{item:cocomm} $A$ is cocommutative. 
        \item\label{item:isotropic} $\nabla=\nabla\inv$.
        \item\label{item:singleton} $A$ has exactly one prime ideal.
    \end{enumerate}
    Under these conditions, $\nabla=1\otimes\id$ as in Example \ref{ex:triv-2}.
\end{prop}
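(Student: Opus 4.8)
The plan is to transport all three conditions to the rack side through the equivalence $\Rack(\Aff)\cong(\Corack(\Alg))\op$ of Proposition \ref{prop:aff-racks}. Writing $Q=\Spec(A)$ with rack operations $(\tr,\tr\inv,e)$, I would read each condition as an identity of morphisms $Q\times Q\to Q$, hence (by the functor of points) as an identity on $R$-points for every $k$-algebra $R$. The unifying observation is that the fixing and fixedness axioms in \eqref{ax:unit} are so rigid that any symmetry hypothesis on the rack operations collapses $Q$ onto its unit; this is exactly what makes corack algebras behave so differently from cocommutative Hopf algebras, and it is the engine of the whole proof.

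Concretely, cocommutativity $\tau\circ\nabla=\nabla$ dualizes to $\tr=\tr\circ\tau$, that is $x\tr y=y\tr x$ for all $R$-points $x,y$, while $\nabla=\nabla\inv$ dualizes to $\tr=\tr\inv$. For the first, fixing gives $e\tr x=x$ and fixedness gives $x\tr e=e$, so by symmetry $x=e\tr x=x\tr e=e$; hence $Q(R)=\{e\}$ for every $R$. For the second, I would use that $\tr\inv$ is recovered from $\tr$ by $a\tr\inv b=L_b\inv(a)$ (Section \ref{subsec:lx}), so that $\tr=\tr\inv$ reads $L_a(b)=L_b\inv(a)$; taking $a=e$ and using $L_e(b)=e\tr b=b$ together with $L_b(e)=b\tr e=e$ and the injectivity of $L_b$ from \eqref{eq:lx}, I again obtain $b=e$. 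In both cases $Q(R)=\{e\}$ for all $R$, so $A\cong k$ by the Yoneda lemma and $\nabla=1\otimes\id$ as in Example \ref{ex:triv-2}. This settles the equivalence of cocommutativity and $\nabla=\nabla\inv$ as well as the final displayed identity.

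It remains to bring in the condition that $A$ have a single prime ideal. The implication from cocommutativity (or from $\nabla=\nabla\inv$) is immediate, since $A\cong k$ has a unique prime ideal. The reverse implication is where I expect the genuine difficulty: a corack algebra with a single prime ideal is a local Artinian $k$-algebra $A=k\oplus\mathfrak{m}$ with $\mathfrak{m}$ nilpotent and $\eps$ the projection onto $k$, and although its reduced points comprise only the unit $e$, one must rule out nontrivial \emph{infinitesimal} behaviour of $\nabla$ along $\mathfrak{m}$. My plan is to expand co-left-distributivity \eqref{ax:cold} in Sweedler notation, normalizing $\nabla$ on $\mathfrak{m}$ by means of co-fixing \eqref{ax:co-fixing} and co-fixedness \eqref{ax:co-fixed}, and then to exploit the nilpotency of $\mathfrak{m}$ to kill the higher-order corrections and force $\nabla=1\otimes\id$.

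The main obstacle is precisely this non-reduced analysis: controlling the nilpotents that an algebraic rack is allowed to carry (cf.\ Remark \ref{rmk:triv}), since on a reduced singleton the conclusion is trivial but on a thickened point it is not. A useful sanity check is $A=k[\delta]$, where a direct expansion of \eqref{ax:cold} shows that the sole admissible corack structure is the trivial quandle; I would expect the general singleton case to proceed along the same lines, with co-invertability supplying whatever is needed to upgrade $\nabla=1\otimes\id$ back to cocommutativity. I anticipate that verifying this upgrade in the presence of nilpotents, rather than the dualizations of the first two conditions, will be the delicate step.
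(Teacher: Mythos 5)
Your treatment of conditions \ref{item:cocomm} and \ref{item:isotropic} is correct and is essentially the paper's own argument made precise: the paper writes ``$x=e\tr x=x\tr e=e$ for all $x\in Q$'' and concludes that $Q$ is a singleton, while you correctly run this on $R$-points for every $k$-algebra $R$ and invoke Yoneda to conclude $A\cong k$. The genuine problem lies in the direction you yourself flagged as delicate, namely that having one prime ideal should imply cocommutativity---and here your own sanity check refutes the statement rather than confirming it. On $A=k[\delta]$ the axioms do force $\nabla=1\otimes\id$ (your reading of \eqref{ax:cold} is right), but $1\otimes\id$ is \emph{not} cocommutative on $k[\delta]$: one has $\tau(\nabla\delta)=\delta\otimes 1\neq 1\otimes\delta$. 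Similarly $\nabla\inv=\id\otimes 1\neq\nabla$ there, since for a trivial quandle $\tr=\pi_2$ while $\tr\inv=\pi_1$. So the trivial quandle scheme on $\Spec(k[\delta])$ from Example \ref{ex:triv-1} is a corack algebra with exactly one prime ideal satisfying neither \ref{item:cocomm} nor \ref{item:isotropic}; there is no ``upgrade back to cocommutativity'' for co-invertability to supply, and no amount of analysis of nilpotents will close this gap, because the implication \ref{item:singleton}$\Rightarrow$\ref{item:cocomm} is false as stated.

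For comparison, the paper's proof of this direction is the single sentence ``Certainly, if $Q$ is a singleton, then $\tr$ is commutative and $\tr=\tr\inv$,'' which silently identifies the one-point topological space $\Spec(A)$ with a one-element rack and therefore contains exactly the gap you located (cf.\ Remark \ref{rmk:triv}, which stresses that algebraic racks need not be reduced). Conditions \ref{item:cocomm} and \ref{item:isotropic} are each equivalent to $A\cong k$, which implies \ref{item:singleton} but is strictly stronger; the proposition becomes correct (and essentially trivial) if condition \ref{item:singleton} is replaced by ``$A\cong k$,'' or equivalently by adding a reducedness hypothesis. You were right to distrust this step; the honest conclusion of your planned computation is a counterexample, not a proof.
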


\begin{proof}
    Once again, $Q\coloneq \Spec(A)$ is an affine algebraic rack, say with rack operations $\tr,\tr\inv$ and identity $e$. By Proposition \ref{prop:aff-racks}, condition \ref{item:cocomm} is equivalent to the condition that $\tr$ is commutative, and condition \ref{item:isotropic} is equivalent to the condition that $\tr=\tr\inv$. (In \cite{rack-roll}, rack objects for which these conditions hold are called \emph{symmetric} and \emph{isotropic}, respectively.) 
    
    Certainly, if $Q$ is a singleton (that is, if condition \ref{item:singleton} holds), then $\tr$ is commutative and $\tr=\tr\inv$. The converses follow from the fixing and fixedness axioms \ref{ax:fixing} and \ref{ax:fixed}. Namely, if $\tr$ is commutative, then
    \[
    x=e\tr x=x\tr e=e
    \]
    for all $x\in Q$. On the other hand, if $\tr=\tr\inv$, then the inverse fixedness identity \ref{ax:inv-fixed} implies that
    \[
    x=e\tr x= e\tr\inv x=e
    \]
    for all $x\in Q$. This proves the equivalence of all three conditions. The final claim follows from the fact that a singleton rack is necessarily a trivial quandle.
\end{proof}

\subsubsection{}
Apart from using affine algebraic racks to study corack algebras, we can also do the opposite. Given a pointed rack $Q$ (in $\Set$), recall (from, say, \cite{larosa2}, though it appears that the notion was originally introduced in \cite{elhamdadi} as ``the set of all stabilizers'') that the \emph{center} of $Q$ is the subset
\[
Z(Q)\coloneq \{x\in Q\mid L_x=\id\}.
\]
Note that $Z(Q)$ is actually a left ideal.
For example, if $G$ is a group, then $Z(\Conj(G))=Z(G)$. 

Inspired by the definition of the center of an algebraic group (see, for example, \cite{milne}*{Sec.\ 1.k}), we adapt this notion to algebraic racks. Recall from \eqref{eq:lx} that for all algebraic racks $Q$ and $k$-algebras $R$, each $R$-point $x\in Q(R)$ induces an isomorphism $L_x\colon Q(R)\bij Q(R)$.

\begin{definition}
    Let $Q\colon \Alg\to\Set$ be an algebraic rack. The \emph{center} of $Q$ is the subfunctor $Z(Q)\colon \Alg\to\Set$ defined by
    \[
    Z(Q)(R)\coloneq \{x\in Q(R)\mid L_x=\id_{Q(R)}\}
    \]
    for all $k$-algebras $R$.
\end{definition}

Note that $Z(Q)$ is a left ideal and trivial subquandle of $Q$. If $Q$ is affine, then we can generalize a classical result for centers of affine algebraic groups (see, for example, \cite{waterhouse}*{Ex.\ 3-14}).

\begin{prop}\label{prop:center}
    If $Q=\Spec(A)$ is an affine algebraic rack, then $Z(Q)$ is a closed affine algebraic left ideal of $Q$.
\end{prop}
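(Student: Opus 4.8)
The plan is to use the equivalence of Proposition~\ref{prop:aff-racks} to transport the problem to the corack algebra $A$ and to exhibit $Z(Q)$ explicitly as the vanishing locus of an ideal $I\subseteq A$. The first step is to rewrite the condition $L_x=\id$ in algebraic terms. For a $k$-algebra $R$ and an $R$-point $x\in Q(R)$ with associated homomorphism $x^\sharp\colon A\to R$, the representable formulation of centrality is that the automorphism $L_x$ of the base change $Q_R$ is the identity, i.e.\ that the induced map $L_x^\sharp$ of \eqref{eq:lx-sharp-full} equals $\id_{R\otimes_k A}$. By \eqref{eq:lx-sharp-full}, this holds if and only if $(x^\sharp\otimes\id_A)(\nabla f)=1\otimes f$ in $R\otimes_k A$ for every $f\in A$. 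This is the rack-theoretic analogue of the fact that centrality in an affine algebraic group must be tested against the generic element; cf.\ \cite{waterhouse}*{Ex.\ 3-14}.

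The second step extracts the ideal. Fix a $k$-basis $\{e_\lambda\}$ of $A$ and, for each $f\in A$, write $\nabla f=\sum_\lambda g_{f,\lambda}\otimes e_\lambda$ with $g_{f,\lambda}\in A$, and $f=\sum_\lambda c_{f,\lambda}e_\lambda$ with $c_{f,\lambda}\in k$ (both sums finite). Since $R\otimes_k A=\bigoplus_\lambda R\otimes e_\lambda$, comparing coefficients of $e_\lambda$ shows that the condition from the first step is equivalent to $x^\sharp(g_{f,\lambda}-c_{f,\lambda}1_A)=0$ for all $f$ and $\lambda$. Letting $I\subseteq A$ be the ideal generated by all the elements $g_{f,\lambda}-c_{f,\lambda}1_A$, we conclude that $x\in Z(Q)(R)$ if and only if $x^\sharp$ factors through the quotient $A\twoheadrightarrow A/I$. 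Thus $Z(Q)$ is represented by the affine scheme $\Spec(A/I)$, and the surjection $A\twoheadrightarrow A/I$ corresponds under Corollary~\ref{cor:closed} to a closed embedding $Z(Q)\hookrightarrow Q$; in particular $Z(Q)$ is a closed subscheme, hence an affine algebraic rack.

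It then remains to record that this closed subscheme is a left ideal in the sense of Definition~\ref{def:ideal2}. This is essentially already observed in the discussion preceding the statement: $Z(Q)$ is a trivial subquandle and a left ideal at the level of the functor of points, so the structure morphisms $\tr_{Z(Q)}$, $\widetilde{\tr}$, and $\widetilde{\tr}\inv$ are defined on $R$-points and land in $Z(Q)(R)$ naturally in $R$. Because $Z(Q)$ is now known to be representable, these natural transformations are morphisms of schemes by Yoneda, and the defining diagrams of Definition~\ref{def:ideal2} commute since they commute on points. Combining this with the closedness established above yields that $Z(Q)$ is a closed algebraic left ideal of $Q$.

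The main obstacle is the representability issue in the first step: the bare functor $R\mapsto\{x\in Q(R):L_x=\id_{Q(R)}\}$ tests centrality only on honest $R$-points, which is generally too weak to be closed or even representable. The resolution, and the crux of the argument, is to read $L_x=\id$ as an identity of endomorphisms of the base change $Q_R$, equivalently $L_x^\sharp=\id_{R\otimes_k A}$, so that \eqref{eq:lx-sharp-full} delivers a single universal family of equations over $A$ from which the ideal $I$ can be read off. Once this reformulation is in place, the remaining steps are routine linear algebra over $k$ together with bookkeeping through the functor of points.
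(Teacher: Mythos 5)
Your proof is correct and follows essentially the same route as the paper: centrality is reformulated via \eqref{eq:lx-sharp-full} as $(x^\sharp\otimes\id_A)(\nabla f)=1\otimes f$ for all $f\in A$, and $Z(Q)$ is then represented by $A/I$, where your basis-coefficient ideal $I$ is exactly the paper's intersection of all ideals $J$ satisfying $\nabla f\equiv 1\otimes f\pmod{J\otimes A}$. The differences are purely expository---you make the ideal explicit via a $k$-basis, spell out the Yoneda argument for the left-ideal diagrams, and flag the $R$-points-versus-base-change subtlety that the paper's terser proof glosses over.
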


\begin{proof}
    This is shown similarly to the analogous result for affine algebraic groups; see, for example, \cite{waterhouse}*{Ex.\ 3-14}). Namely, we have to show that $Z(Q)$ is representable as a functor. To that end, let $I$ be the intersection of all ideals $J$ of $A$ such that
    \[
    \nabla f\equiv 1\otimes f\pmod{J\otimes A}
    \]
    for all $f\in A$. 
    
    We claim that $A/I$ is a representing object for $Z(Q)$. By Proposition \ref{prop:aff-racks} and \eqref{eq:lx-sharp-full}, $Z(Q)$ is the largest subrack of $Q$ such that
    \[
    1\otimes f=L_x^\sharp(1\otimes f)=(x^\sharp\otimes\id_A)(\nabla f)
    \]
    for all $k$-algebras $R$, $R$-points $x\in Z(Q)(R)$, and functions $f\in A$. The claim follows.
\end{proof}

\begin{example}[Cf.\ \cite{waterhouse}*{Ex.\ 3-14}]\label{ex:center}
    Let $G$ be an affine algebraic group. Then $Z(\Conj(G))=Z(G)$ is a closed algebraic normal subgroup of $G$ and, accordingly, a closed algebraic left ideal of $\Conj(G)$.
\end{example}

\section{Preparation for Theorem \ref{thm:main}}\label{sec:pre-pf}
In this section, we give a preliminary discussion of the proof of Theorem \ref{thm:main} in Section \ref{sec:pf}.

\subsection{Strategy}
Our approach to the proof is inspired by Kinyon's proof of the analogous result for Lie racks in \cite{kinyon}*{Thm.\ 3.4}. 

\subsubsection{Lie racks}
Before outlining our approach, we briefly summarize Kinyon's proof. 
Given a real or complex Lie rack $Q$, let $\q\coloneq T_eQ$. Define a smooth map $\Ad\colon Q\to \GL(\q)$ by $x\mapsto \Ad_x\coloneq (dL_x)_e$, and consider its derivative $\ad\colon \q\to \mathfrak{gl}(\q)=\operatorname{End}(\q)$ defined by $X\mapsto \ad_X\coloneq (d\Ad)_e(X)$. We call $\Ad$ and $\ad$ the \emph{adjoint representations} of $Q$ and $\q$ on $\q$, respectively.

The left-distributivity axiom of racks states that
\[
(L_x\circ L_y)(z)=(L_{L_x(y)}\circ L_x)(z)
\]
for all $x,y,z\in Q$. 
Differentiating at $e\in Q$ twice, first with respect to $z$ and then with respect to $y$, yields
\[
(\Ad_x \circ\ad_Y)(Z)=(\ad_{\Ad_x(Y)}\circ\Ad_x(Z))
\]
for all $x\in Q$ and $Y,Z\in \q$. Differentiating at $e$ wih respect to $x$ yields the Jacobi identity in terms of $\ad$, showing that $\q$ has a Leibniz algebra structure given by $[X,Y]\coloneq \ad_X(Y)$.

\subsubsection{Outline}\label{sec:broad-outline}
In the algebraic setting, we also work with adjoint representations. Later in this section (see Definition \ref{def:ad2}), we define adjoint representations in the same way that Kinyon does, replacing analytic derivatives with tangent maps of affine schemes. 

Given an affine algebraic rack $Q=\Spec(A)$, let $\q\coloneq \Der_k(A,k)$. In Section \ref{sec:pf}, we begin by showing that the adjoint representation of $\q$ agrees with convolution $[\cdot,\cdot]$; see Proposition \ref{prop:adj} and Remark \ref{rmk:recover}. 
In lieu of partial differentation, we verify the Jacobi identity using the convolution formula and the corack identities of $A$; see Proposition \ref{prop:jacobi}. Compatibility with morphisms is straightforward from there; see Proposition \ref{prop:functor}.

\subsection{Tangent spaces}
We discuss tangent spaces and tangent maps of affine schemes; see \cite{milne}*{App.\ A.h} and \cite{borel}*{AG.16} for details. 
Henceforth, let $Q=\Spec(A)$ be an affine algebraic rack, and recall from Section \ref{sec:nabla} that $Q$ induces a corack algebra structure $(\nabla,\nabla\inv,\eps)$ on $A$.

\subsubsection{}We construct the Leibniz algebra of $Q$. Our approach is modeled after the construction of the Lie algebra of an algebraic group in \cite{milne}*{Sec.\ 10b}; see also \cite{waterhouse}*{Sec.\ 12.2} and \cite{borel}*{Sec.\ 3.5}.

Since the counit $\eps\colon A\to k$ induces an $A$-module structure on $k$, recall (from, say, \cite{milne}*{Sec.\ 10e}) that a \emph{$k$-derivation} $D\colon A\to k$ is a $k$-linear map satisfying the \emph{Leibniz rule} for all $f,g\in A$:
\[
D(fg)=D(f)\eps(g)+\eps(f)D(g).
\]
The vector space of all such derivations is denoted by $\Der_k(A,k)$.

\begin{definition}
    The \emph{left Leibniz algebra of $Q$} is the vector space
    \[
    \q\coloneq \Der_k(A,k)
    \]
    equipped with the Leibniz bracket $[\cdot,\cdot]\colon \q^2\to \q$ defined by 
    \[
    [D,E]\coloneq (D\otimes E)\circ\nabla,
    \]
    the \emph{convolution} operation with respect to $\nabla$.

    Similarly, the \emph{right Leibniz algebra of $Q$} is the pair $(\q,\{\cdot,\cdot\})$, where
    \[
    \{D,E\}\coloneq (D\otimes E)\circ\nabla\inv.
    \]
\end{definition}

\begin{rmk}
    Given a function $f\in A$, write $\nabla f=\sum \fo\otimes\ft$. Then
\[
[D,E](f)=\sum D(\fo)E(\ft),
\]
and similarly for $\{D,E\}(f)$. This makes it clear that $[\cdot,\cdot]$ and $\{\cdot,\cdot\}$ are $k$-bilinear.
\end{rmk}

\subsubsection{}
We give an alternative interpretation of $\q$. 
Let $T_e Q$ denote the tangent space of $Q$ at the identity $e$, and recall that $k[\delta]\coloneq k[\delta]/(\delta^2)$ denotes the $k$-algebra of dual numbers.
Using one of the equivalent characterizations of tangent spaces of schemes (see, for example, \cite{milne}*{A.51}), we can identify $T_eQ$ with the set of $k$-algebra homomorphisms $ A\to k[\delta]$ whose composite with the canonical map $k[\delta]\to k$ equals the counit $\eps\colon A\to k$. This provides the following isomorphism.

\begin{prop}\label{prop:tangent}
    Define a map $\partial\colon \q\to T_eQ$ by assigning to each $k$-derivation $D\colon A\to k$ the $k$-algebra homomorphism     \[
    \partial D\coloneq \partial(D)\coloneq \eps+\delta D\colon A\to k[\delta].
    \]
    Then $\partial$ is a bijection that induces a $k$-vector space structure on $T_e Q$.
\end{prop}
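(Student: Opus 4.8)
The plan is to verify directly that $\partial$ is well-defined with image in $T_eQ$, exhibit an explicit two-sided inverse, and then transport the $k$-vector space structure of $\q$ across the resulting bijection. The heart of the matter is a short computation in the dual numbers $k[\delta]$ showing that multiplicativity of an algebra homomorphism is \emph{equivalent} to the Leibniz rule for its $\delta$-component.

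First I would check that for $D\in\q$ the map $\partial D=\eps+\delta D$ is a unital $k$-algebra homomorphism $A\to k[\delta]$. It is $k$-linear since $\eps$ and $D$ are. For multiplicativity, expand $\partial D(f)\,\partial D(g)=(\eps(f)+\delta D(f))(\eps(g)+\delta D(g))$ and use $\delta^2=0$ to obtain $\eps(f)\eps(g)+\delta\big(\eps(f)D(g)+D(f)\eps(g)\big)$; comparing with $\partial D(fg)=\eps(fg)+\delta D(fg)$ and using that $\eps$ is multiplicative, the two agree precisely because $D$ obeys the Leibniz rule. Unitality follows since the Leibniz rule forces $D(1)=0$ (from $D(1)=D(1)\eps(1)+\eps(1)D(1)=2D(1)$, valid in every characteristic), so $\partial D(1)=1$. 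Finally, composing $\partial D$ with the canonical projection $k[\delta]\to k$ sending $\delta\mapsto 0$ returns $\eps$, so $\partial D\in T_eQ$.

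For the inverse, I would take any $\psi\in T_eQ$ and write $\psi(f)=\eps(f)+\delta D_\psi(f)$: the requirement that $\psi$ reduce to $\eps$ modulo $\delta$ pins the $\delta^0$-component as $\eps(f)$, and the $\delta^1$-component defines a $k$-linear map $D_\psi\colon A\to k$ via the decomposition $k[\delta]=k\oplus k\delta$. Running the previous computation in reverse, multiplicativity of $\psi$ is then equivalent to $D_\psi$ satisfying the Leibniz rule, so $D_\psi\in\q$ and $\psi=\partial D_\psi$. As $D\mapsto\partial D$ and $\psi\mapsto D_\psi$ are visibly mutually inverse, $\partial$ is a bijection, and the remaining assertion is transport of structure: declaring $\alpha\cdot\partial D+\beta\cdot\partial E\coloneq\partial(\alpha D+\beta E)$ is the unique $k$-vector space structure on $T_eQ$ making $\partial$ linear. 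I expect no genuine obstacle here; the only point meriting care is that the two halves of the decomposition $k[\delta]=k\oplus k\delta$ are $k$-linear so that $D_\psi$ is well-defined, which is routine. It is worth remarking that this structure coincides with the standard one on the Zariski tangent space $(\mathfrak m/\mathfrak m^2)^\ast$ with $\mathfrak m=\ker\eps$, so no ambiguity arises.
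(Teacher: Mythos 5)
Your argument is correct and is exactly the standard computation that the paper delegates to the analogous result for algebraic groups (\cite{milne}*{Prop.\ 10.28}): multiplicativity of $\eps+\delta D$ modulo $\delta^2$ is equivalent to the Leibniz rule for $D$, the inverse reads off the $\delta$-coefficient, and the vector space structure is transported across the bijection. No discrepancies with the paper's intended proof.
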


\begin{proof}
    The claim is proven identically to the analogous result for algebraic groups; see, for example, \cite{milne}*{Prop.\ 10.28}.
\end{proof}

\subsubsection{}
Similarly, recall (from, say, \cite{milne}*{Prop.\ 10.28}) that for every vector space $V$ over $k$, we have isomorphisms
\begin{align}
    \Der_k(\O(\GL(V)),k)&\bij T_{\id}\mathord{\GL}(V)& \mathfrak{gl}(V)=\operatorname{End}(V) &\bij T_{\id}\mathord{\GL}(V)\nonumber\\
    F& \mapsto \varepsilon_{\O(\GL(V))}+ \delta F, &   f&\mapsto \id_{k[\delta]\otimes_k V }+\delta f. \label{eq:glq}
\end{align}
In the isomorphism on the left, we identify $T_{\id}\mathord{\GL}(V)$ as the vector space of $k$-algebra homomorphisms $\O(\GL(V))\to k[\delta]$ whose composite with $k[\delta]\to k$ is the counit $\varepsilon_{\O(\GL(V))}$ of the Hopf algebra $\O(\GL(V))$. On the right, we dually identify $T_{\id}\mathord{\GL}(V)$ as the subgroup 
\[
T_{\id}\mathord{\GL}(V)=\ker(\GL(V)(k[\delta])\to \GL(V)(k))\leq \GL(V)(k[\delta]).
\]
(See, for example, \cite{milne}*{10.7}.) 

In particular, to pass from $\Der_k(\O(\GL(\q)),k)$ to $\mathfrak{gl}(\q)$ during the proof of Theorem \ref{thm:main} using \eqref{eq:glq}, we will need to pass between these two equivalent interpretations of $T_{\id}\mathord{\GL}(\q)$. See Section \ref{sec:sketch} for an overview of this argument.

\subsubsection{Tangent maps}
We recall the definition of the derivative of a morphism of schemes; see \cite{borel}*{AG.16} for details. 

Given a morphism of schemes $\phi\colon X\to Y$ and a point $x\in X$, let $\phi^\sharp\colon \O(Y)\to\O(X)$ be the pullback $k$-algebra homomorphism. Identify $T_xX$ and $T_{\phi(x)}Y$ with the respective spaces of $k$-derivations. Then the \emph{derivative}, \emph{tangent map}, or \emph{differential} of $\phi$ at $x$ is the $k$-linear map $(d\phi)_x\colon T_xX\to T_{\phi(x)}Y$  defined by 
\[
(d\phi)_x(D)\coloneq D\circ\phi^\sharp
\]
for all $k$-derivations $D\in \Der_k(\O_{X,x},k)$. Differentiation is functorial in the obvious sense; in analogy with differential calculus, this fact is called the \emph{chain rule}.

\subsection{Adjoint representations} In analogy with Kinyon's proof, we define the \emph{adjoint representations} of $Q$ and $\q$ on $\q$.

Recall from \eqref{eq:lx} that every point $x\in Q$ induces algebraic rack automorphisms $L_x,L_x\inv\colon Q\bij Q$. Due to the fixedness axiom (that is, the second diagram in \eqref{ax:unit}) and the fact that differentiation is functorial, the tangent maps $(dL_x)_e,(dL_x\inv)_e$ are $k$-linear automorphisms of $\q$. This allows us to define the following.
\begin{definition}\label{def:ad2}
    The \emph{adjoint representation} of $Q$ on $\q$ is the morphism of schemes 
    $\Ad\colon Q\to\GL(\q)$ defined by sending each element $x\in Q$ to the $k$-linear automorphism
    \[
    \Ad_x \coloneq  (dL_x)_e\in\GL(\q).
    \]
    The derivative $\ad\coloneq (d\Ad)_e\colon \q\to \Der_k(\O(\GL(\q)),k)$ is called the \emph{adjoint representation} of $\q$ on itself. Given a $k$-derivation $D$, write $\ad_D\coloneq\ad(D)$.

    Similarly, define the \emph{dual adjoint representations} of $Q$ and $\q$ via
    \[
    \Ad\inv_x\coloneq (dL_x\inv)_e,\qquad \ad\inv\coloneq (d\Ad\inv)_e.
    \]
\end{definition}

\subsubsection{}\label{sec:sketch}
For the purposes of proving Theorem \ref{thm:main}, Definition \ref{def:ad2} is problematic because it defines the codomain of $\ad^{\pm 1}$ to be $\Der_k(\O(\GL(\q)),k)$ rather than $\mathfrak{gl}(\q)$. To find the element of $\mathfrak{gl}(\q)$ corresponding to $\ad_D^{\pm 1}\in \Der_k(\O(\GL(\q)),k)$ for each $k$-derivation $D\in\q$, we employ the isomorphisms in Proposition \ref{prop:tangent} and \eqref{eq:glq}. Section \ref{sec:convo} is dedicated to this argument.

Here is an overview of the argument in Section \ref{sec:convo}. Given $D\in\q$, we lift $D$ to a $k$-algebra homomorphism $\partial D\colon A\to k[\delta]$ as in Proposition \ref{prop:tangent}. Viewing $\partial D$ as a $k[\delta]$-point of $Q$ gives us $k[\delta]$-points $\Ad_{\partial D}\in\GL(k[\delta]\otimes_k\q )$ and $\Ad_{\partial D}\inv\in\GL(\q\otimes_k k[\delta])$ of $\GL(\q)$. 

Identify $k[\delta]\otimes_k\q$ and $\q\otimes_k k[\delta]$ with $\q[\delta]/(\delta^2)$.
Then the second isomorphism in \eqref{eq:glq} states that for all $k$-derivations $E\in\q$, the element of $\q$ corresponding to $\ad_D(E)$ (resp.\ $\ad_D\inv(E)$) is precisely the coefficient of $\delta$ in $\Ad_{\partial D}(1\otimes E)$ (resp.\ $\Ad_{\partial D}\inv(E\otimes 1)$) in $\q[\delta]/(\delta^2)$. We show that this coefficient is the convolution $[D,E]$ (resp.\ $\{E,D\}$).

\section{Proof of Theorem \ref{thm:main}}\label{sec:pf}
This section is dedicated to proving Theorem \ref{thm:main}. See Section \ref{sec:broad-outline} for an outline of the proof.

\subsection{Convolution versus $\ad$}\label{sec:convo}
Our first goal is to show that $\ad$ agrees with the Leibniz bracket $D\mapsto [D,\cdot]$ when considered as a map $\q\to\mathfrak{gl}(\q)$. Along with providing an explicit formula for $\ad$, this will show \emph{a fortiori} that $\q$ is closed under convolution. It will also prove the statement in the second paragraph of Theorem \ref{thm:main}; see Remark \ref{rmk:recover}. Our method follows the outline in Section \ref{sec:sketch}.

\subsubsection{}
In the following, fix arbitrary $k$-derivations $D,E\in\q$. Our proofs of Propositions \ref{prop:adj} and \ref{prop:inv} will use the $k$-linear maps $\psi_D,\phi_D\colon A\to A$ defined by
\begin{equation}\label{eq:psi-d}
    \psi_D\coloneq ( D\otimes\id)\circ\nabla,\qquad \phi_D\coloneq (\id\otimes D)\circ\nabla\inv.
\end{equation}
Note in particular that
\begin{equation}\label{eq:comm}
    E\circ \psi_D = [D,E],\qquad E\circ\phi_D=\{E,D\}
\end{equation}
because $E$ is $k$-linear. 

\subsubsection{}
Define $\partial D$ as in Proposition \ref{prop:tangent}, and view $\partial D$
as a $k[\delta]$-point of $Q$. Also, consider the $k$-algebra
\[
    A[\delta]\coloneq A[\delta]/(\delta^2).
\]
Via the identifications $k[\delta]\otimes_kA \bij A[\delta]$ and $A\otimes_k k[\delta] \bij A[\delta]$,
we obtain from \eqref{eq:lx-sharp-full} and \eqref{eq:lx-sharp-full-2} two automorphisms $L_{\partial D}^\sharp,(L_{\partial D}\inv)^\sharp$ of $A[\delta]$ defined by
\[
L_{\partial D}^\sharp(\delta^n f)= \delta^n(\partial D\otimes \id_A)(\nabla f),\qquad (L_{\partial D}\inv)^\sharp(\delta^n f)=\delta^n(\id_A\otimes\partial D)(\nabla\inv f)
\]
for all $f\in A$. The following gives alternative formulas for these automorphisms.

\begin{lemma}\label{lem:ld-full}
    For all $k$-derivations $D\in\q$ and functions $f\in A$, we have \[L_{\partial D}^\sharp(\delta^n f)=\delta^n(f +\delta \psi_D(f)),\qquad (L_{\partial D}\inv)^\sharp(\delta^n f)=\delta^n(f +\delta \phi_D(f)).\]
\end{lemma}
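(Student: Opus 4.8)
The plan is to reduce the claim to a single application of the co-fixing axiom \ref{ax:co-fixing}. Since $L_{\partial D}^\sharp(\delta^n f)=\delta^n(\partial D\otimes\id_A)(\nabla f)$ by the formula preceding the lemma (which is just \eqref{eq:lx-sharp-full} specialized to $R=k[\delta]$ and $x=\partial D$), it suffices to prove the $n=0$ identity
\[
(\partial D\otimes\id_A)(\nabla f)=f+\delta\psi_D(f);
\]
the general case then follows by multiplying through by $\delta^n$ and invoking $k[\delta]$-linearity of $L_{\partial D}^\sharp$.

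To establish this, I would write $\nabla f=\sum\fo\otimes\ft$ in Sweedler notation and substitute $\partial D=\eps+\delta D$, so that
\[
(\partial D\otimes\id_A)(\nabla f)=\sum\eps(\fo)\otimes\ft+\delta\sum D(\fo)\otimes\ft.
\]
Under the identification $k[\delta]\otimes_k A\bij A[\delta]$ sending $c\otimes a\mapsto ca$, the first sum becomes $\sum\eps(\fo)\ft$, which equals $f$ by co-fixing \ref{ax:co-fixing} (that is, $(\eps\otimes\id)\circ\nabla=\id$), while the second becomes $\delta\sum D(\fo)\ft=\delta\,(D\otimes\id)(\nabla f)=\delta\psi_D(f)$ directly from the definition \eqref{eq:psi-d} of $\psi_D$. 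Adding the two terms yields the desired $n=0$ identity.

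The step that requires the most care is bookkeeping rather than any genuine difficulty: correctly substituting the $k[\delta]$-point $\partial D\colon A\to k[\delta]$ for $x^\sharp$ in \eqref{eq:lx-sharp-full}, keeping the identification $k[\delta]\otimes_k A\cong A[\delta]$ straight, and confirming that factoring the scalar $\delta^n$ out front is justified (one may note that $\delta^2=0$ automatically reconciles the formula for $n\geq 1$, where the $\delta\psi_D(f)$ term is annihilated). Notably, only the co-fixing axiom is needed here; the co-invertability, co-left/right distributivity, and co-fixedness axioms play no role in this computation.
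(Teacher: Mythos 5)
Your proposal is correct and follows essentially the same computation as the paper: substitute $\partial D=\eps+\delta D$ into $(\partial D\otimes\id_A)(\nabla f)$, identify the $\eps$-term with $f$ via the co-fixing identity \ref{ax:co-fixing}, and recognize the $\delta$-term as $\delta\psi_D(f)$ from \eqref{eq:psi-d}. The only cosmetic difference is that you factor out $\delta^n$ at the start rather than carrying it through the calculation.
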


\begin{proof}
    For all $f\in A$, write $\nabla f=\sum \fo\otimes\ft$. Then we compute
    \begin{align*}
    L_{\partial D}^\sharp(\delta^n f)&=\delta^n\sum \partial D(\fo)\ft\\
    &=\delta^n\sum(\eps(\fo)+\delta D(\fo))\ft \\
    &= \delta^n\left(\sum \eps(\fo)\ft+\delta\sum D(\fo)\ft\right)\\
    &=\delta^n(f+\delta \psi_D(f)), 
\end{align*}
where in the last equality we have used the co-fixing identity \ref{ax:co-fixing}. The calculation for $(L_{\partial D}\inv)^\sharp$ is similar and uses the co-inverse fixing identity \ref{ax:co-inv-fixing}.
\end{proof}

\subsubsection{}
Similarly, since $\partial D$ is a $k[\delta]$-point of $Q$ and $\Ad^{\pm 1}\colon Q\to\GL(\q)$ are morphisms of schemes, we obtain $k[\delta]$-points of $\GL(\q)$ \[\Ad_{\partial D}\in\GL(k[\delta]\otimes_k \q),\qquad \Ad_{\partial D}\inv\in\GL(\q\otimes_k k[\delta]).\] Identify $k[\delta]\otimes_kA \bij A[\delta]$ and $A\otimes_kk[\delta] \bij A[\delta]$. Then the definition of the derivative shows for all $k$-derivations $E\in\q$, we have 
\begin{equation}\label{eq:adj-partial}
    \Ad_{\partial D}( 1\otimes E)= E\circ L_{\partial D}^\sharp ,\qquad \Ad_{\partial D}\inv( E\otimes 1)= E\circ (L_{\partial D}\inv)^\sharp
\end{equation}
as maps $A[\delta]\to k[\delta]$. We use this to show the following. \begin{prop}\label{prop:adj}
    When considered as a map $\ad\colon\q\to\mathfrak{gl}(\q)$, the adjoint representation of $\q$ is precisely the map $D\mapsto [D,\cdot]$. That is, $\ad_D(E)=[D,E]$ for all $k$-derivations $D,E\in\q$. Similarly, $\ad\inv_D(E)=\{E,D\}$ if $\ad\inv$ is considered as a map $\q\to\mathfrak{gl}(\q)$. 
    
    In particular, $\q$ is closed under $[\cdot,\cdot]$ and $\{\cdot,\cdot\}$. 
\end{prop}

\begin{proof}
    We prove the claim for $\ad$ and $[\cdot,\cdot]$; the claim for $\ad\inv$ and $\{\cdot,\cdot\}$ is shown similarly.
    
    Recall that $\ad=(d\Ad)_e$. Identify $\ad_D\in\Der_k(\O(\GL(V)),k)$ as an element of $\mathfrak{gl}(\q)$ under the isomorphisms in \eqref{eq:glq} and the discussion following it. Then $\ad_D(E)\in\q$ equals the coefficient of $\delta$ in $\Ad_{\partial D}(1\otimes E)$, where $\Ad_{\partial D}$ is viewed as a $k[\delta]$-point of $\GL(\q)$. To compute this coefficient, combine Lemma \ref{lem:ld-full} with \eqref{eq:adj-partial} to obtain
    \[
    \Ad_{\partial D}( 1\otimes E)(f)=E(f)+\delta E(\psi_D(f))
    \]
    for all functions $f\in A$. Hence, \eqref{eq:comm} shows that
    \[
    \ad_D(E)=E\circ\psi_D=[D,E],
    \]
    as desired.
\end{proof}

\begin{rmk}\label{rmk:recover}
    If $G$ is an affine algebraic group and $Q=\Conj(G)$ is its conjugation quandle scheme, then $(\q,[\cdot,\cdot])$ is the Lie algebra of $G$. To see this, note that $Q$ and $G$ have the same underlying scheme and identity $e$, so they also have the same tangent space $\q$ at $e$. By the definition of $\Conj(G)$, the adjoint representations $\Ad$ of $Q$ and $\ad$ of $\q$ are therefore the same as those of $G$ and its Lie algebra (see, for example, \cite{milne}*{Rem.\ 10.24}). Hence, the claim follows from Proposition \ref{prop:adj}.\footnote{Alternatively, one can use Example \ref{ex:corack-conj} and the Hopf algebra axioms to directly (if tediously) verify that the Leibniz bracket $[D,E]=(D\otimes E)\circ\nabla$ agrees with the usual Lie bracket $(D\otimes E-E\otimes D)\circ\Delta$ of $\q=\operatorname{Lie}(G)$ (see \cite{waterhouse}*{Sec.\ 12.2}); we leave the details to the reader. As a hint, first deduce the identity $\eps\circ S=\eps$ from the axiom $\mu\circ (S\otimes\id)\circ\Delta=\eta\circ\eps$ by post-composing with $\eps$, and then deduce from these two equations that $D\circ S=-D$.}

    Similarly, if $k=\R$ or $\mathbb{C}$ and $Q(k)$ is an algebraic Lie rack, then our definitions of $\Ad$ and $\ad$ coincide with the ones for Lie racks in \cite{kinyon}, so $(\q,[\cdot,\cdot])$ is the Leibniz algebra of $Q(k)$ as a Lie rack.
\end{rmk}

\subsection{Jacobi identity}
Next, we verify that $[\cdot,\cdot]$ and $\{\cdot,\cdot\}$ satisfy the left and right Jacobi identities, respectively. 
Similarly to how the analogous result for Lie racks \cite{kinyon}*{Thm.\ 3.4} relies on the left-distributivity axiom \ref{ax:left} of racks, our verification of the left Jacobi identity for $[\cdot,\cdot]$ relies on the co-left distributivity identity \ref{ax:cold} and the co-fixing identity \ref{ax:co-fixing}. The verification of the right Jacobi identity for $\{\cdot,\cdot\}$ is similar; we leave it to the reader to adapt the argument using the co-right distributivity identity \ref{ax:cord} and the co-inverse fixing identity \ref{ax:co-inv-fixing}.

\subsubsection{}
In the following, let $X,Y,Z\in\q$ be $k$-derivations, and let $f\in A$. To avoid using double indices, write $\nabla f=\sum u\otimes v$, $\nabla u=\sum \uo\otimes\ut$, and $\nabla v= \sum\vo\otimes\vt$. We begin with two direct calculations involving the right-hand side of the co-left distributivity identity \ref{ax:cold}. 

\begin{lemma}\label{lem:sums}
    As an element of $k$, the expression \[S\coloneq ((X\otimes Y\otimes Z)\circ(\mu\otimes\mathord{\id}\otimes\id)\circ(\mathord{\id}\otimes\tau\otimes\id)\circ(\nabla\otimes\nabla)\circ\nabla)(f),\] which comes from \ref{ax:cold}, can be written as the sum of two expressions
    \[
    S_1\coloneq \sum X(\uo) \eps(\vo)Y(\ut) Z(\vt)
    \]
    and
    \[
    S_2\coloneq\sum\eps(\uo)X(\vo)Y(\ut)Z(\vt) .
    \]
\end{lemma}

\begin{proof}
    Since $X$ is a $k$-derivation, we have
    \begin{align*}
        S &= (X\otimes Y\otimes Z)\left(\sum\uo\vo\otimes \ut\otimes \vt\right )\\
        &=\sum X(\uo\vo) Y(\ut)Z(\ut)\\
        &= \sum(X(\uo)\eps(\vo)+\eps(\uo)X(\vo))Y(\ut)Z(\ut)\\
        &= S_1+S_2,
    \end{align*}
    as desired.
\end{proof}

\begin{lemma}\label{lem:s1}
    Let $S_1$ and $S_2$ be the sums defined in Lemma \ref{lem:sums}. Then
    \[
    S_1=[[X,Y],Z](f),\qquad S_2=[Y,[X,Z]](f) .
    \]
\end{lemma}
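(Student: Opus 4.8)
The plan is to unwind the definitions of $S_1$ and $S_2$ directly using the convolution formula for the Leibniz bracket, relying on the co-fixing identity \ref{ax:co-fixing} to collapse the $\eps$-factors. The key observation is that both $S_1$ and $S_2$ already carry a factor of either $\eps(\vo)$ or $\eps(\uo)$, and the co-fixing identity $(\eps\otimes\id)\circ\nabla=\id$ lets me contract these. First I would handle $S_1$: since $\nabla v=\sum\vo\otimes\vt$, co-fixing gives $\sum\eps(\vo)\vt=v$, so summing over the $v$-indices turns $\sum X(\uo)\eps(\vo)Y(\ut)Z(\vt)$ into $\sum X(\uo)Y(\ut)Z(v)$. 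Now recognizing $\sum X(\uo)Y(\ut)$ as the inner bracket evaluated on $u$, namely $[X,Y](u)=\sum X(\uo)Y(\ut)$ (this is exactly the convolution formula applied to $\nabla u=\sum\uo\otimes\ut$), the expression becomes $\sum [X,Y](u)\,Z(v)$, which is precisely $([X,Y]\otimes Z)(\sum u\otimes v)=([X,Y]\otimes Z)(\nabla f)=[[X,Y],Z](f)$.

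For $S_2$ the argument is parallel but contracts on the $u$-indices instead. Here $\sum\eps(\uo)X(\vo)Y(\ut)Z(\vt)$ has the factor $\eps(\uo)$, and applying co-fixing to $\nabla u=\sum\uo\otimes\ut$ gives $\sum\eps(\uo)\ut=u$. However, I must be careful about the index structure: the $\ut$ here is entangled with the $Y$-factor, so after contraction the surviving $u$-dependence sits inside $Y(u)$. Concretely, summing the $u$-indices yields $\sum X(\vo)\,Y(u)\,Z(\vt)$. Then $\sum X(\vo)Z(\vt)=[X,Z](v)$ by the convolution formula applied to $\nabla v=\sum\vo\otimes\vt$, so $S_2=\sum Y(u)\,[X,Z](v)=(Y\otimes[X,Z])(\sum u\otimes v)=(Y\otimes[X,Z])(\nabla f)=[Y,[X,Z]](f)$.

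The main obstacle I anticipate is bookkeeping the multi-index Sweedler notation correctly, specifically ensuring that when I contract one pair of indices via co-fixing, the remaining indices recombine into a genuine single application of $\nabla$ to $f$ rather than some spurious iterated coproduct. Because corack algebras are \emph{not} coassociative, the order in which the sub-coproducts $\nabla u$ and $\nabla v$ were taken matters, and I need the contraction to land exactly on $\nabla f=\sum u\otimes v$ and not on a reassociated variant. The co-fixing identity is what guarantees this works cleanly: contracting $\eps$ against the \emph{first} slot of an inner coproduct returns the original element, so after each contraction the outer coproduct $\nabla f$ is left untouched. I would therefore present the two computations as short displayed \texttt{align*} chains, each consisting of (i) applying co-fixing to collapse the $\eps$-factor, (ii) recognizing the inner convolution as a bracket, and (iii) recognizing the outer convolution as the final nested bracket, taking care to keep the factor associated with $Y$ in the correct slot throughout. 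No step requires anything beyond the convolution formula and identity \ref{ax:co-fixing}, so the proof should be genuinely short.
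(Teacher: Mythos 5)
Your proposal is correct and follows essentially the same route as the paper's proof: contract the $\eps$-factor using the co-fixing identity (via $k$-linearity of $Z$, resp.\ $Y$), recognize the inner convolution as $[X,Y](u)$ (resp.\ $[X,Z](v)$), and then identify the outer sum with $([X,Y]\otimes Z)(\nabla f)$ (resp.\ $(Y\otimes [X,Z])(\nabla f)$). The paper writes out only the $S_1$ computation and declares $S_2$ ``similar,'' whereas you spell out both, including the correct observation that the contraction leaves the outer coproduct $\nabla f=\sum u\otimes v$ intact despite non-coassociativity.
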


\begin{proof}
    We have
    \begin{align*}
        S_1 &= \sum X(\uo)Y(\ut)\eps(\vo)Z(\vt) \\
        &= \sum (X\otimes Y)(\nabla u) Z(\eps(\vo)\vt)&\text{because $Z$ is $k$-linear}\\
        &=\sum [X,Y](u) (Z\circ (\eps\otimes\id)\circ \nabla)(v)\\
        &= \sum [X,Y](u) Z(v)&\text{by the co-fixing identity \ref{ax:co-fixing}}\\
        &=([X,Y]\otimes Z)\left(\sum u\otimes v\right)\\
        &= ([X,Y]\otimes Z)(\nabla f)\\
        &= [[X,Y],Z](f),
    \end{align*}
    as desired. The calculation for $S_2$ is similar.
\end{proof}

\subsubsection{}
Now we can finally verify that $(\q,[\cdot,\cdot])$ is a left Leibniz algebra.

\begin{prop}\label{prop:jacobi}
    The convolution operation $[\cdot,\cdot]$ satisfies the left Jacobi identity.
\end{prop}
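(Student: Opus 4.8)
The plan is to prove the Jacobi identity
\[
[X,[Y,Z]]=[[X,Y],Z]+[Y,[X,Z]]
\]
by evaluating both sides on an arbitrary $f\in A$ and reducing everything to expressions built from the co-left distributivity identity \ref{ax:cold}. The two preceding lemmas already did most of the work on the right-hand side: by Lemma \ref{lem:sums} and Lemma \ref{lem:s1}, applying $X\otimes Y\otimes Z$ to the right-hand side of identity \ref{ax:cold} evaluated at $f$ yields exactly $S_1+S_2=[[X,Y],Z](f)+[Y,[X,Z]](f)$, which is the right-hand side of Jacobi. So the crux is to show that applying $X\otimes Y\otimes Z$ to the \emph{left-hand} side of identity \ref{ax:cold}, namely $(\mathord{\id}\otimes\nabla)\circ\nabla$, produces $[X,[Y,Z]](f)$.

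First I would carry out this left-hand computation directly. Using the notation $\nabla f=\sum u\otimes v$ and $\nabla v=\sum\vo\otimes\vt$ from the setup, we have
\[
((X\otimes Y\otimes Z)\circ(\mathord{\id}\otimes\nabla)\circ\nabla)(f)=\sum X(u)\,Y(\vo)\,Z(\vt).
\]
I would then recognize the inner factor $\sum Y(\vo)Z(\vt)=\sum(Y\otimes Z)(\nabla v)=[Y,Z](v)$, so the whole expression collapses to $\sum X(u)\,[Y,Z](v)=(X\otimes[Y,Z])(\nabla f)=[X,[Y,Z]](f)$. This is a short calculation analogous in spirit to Lemma \ref{lem:s1} but simpler, since no derivation rule or co-fixing step is needed — it is pure associativity of the tensor contraction.

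Having both sides in hand, I would conclude by invoking identity \ref{ax:cold}: since $(\mathord{\id}\otimes\nabla)\circ\nabla=(\mu\otimes\mathord{\id}\otimes\id)\circ(\mathord{\id}\otimes\tau\otimes\id)\circ(\nabla\otimes\nabla)\circ\nabla$ as $k$-algebra maps $A\to A\otimes A\otimes A$, post-composing both sides with the fixed $k$-linear functional $X\otimes Y\otimes Z$ and evaluating at $f$ gives
\[
[X,[Y,Z]](f)=S=S_1+S_2=[[X,Y],Z](f)+[Y,[X,Z]](f).
\]
Since $f\in A$ and $X,Y,Z\in\q$ were arbitrary, this is precisely the Jacobi identity.

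I do not expect a genuine obstacle here; the substantive content was front-loaded into Lemmas \ref{lem:sums} and \ref{lem:s1}, and what remains is the bookkeeping of one tensor contraction plus a single appeal to identity \ref{ax:cold}. The only point requiring mild care is the ordering and bracketing of the tensor factors when matching the abstract composite against the Sweedler-notation sums — in particular making sure the transposition $\tau$ in \ref{ax:cold} is accounted for, which is exactly what routes the contraction into the $S_1+S_2$ grouping rather than some other pairing. Keeping the multi-index conventions $\nabla u=\sum\uo\otimes\ut$ and $\nabla v=\sum\vo\otimes\vt$ straight (and distinct from one another) is the one place an error could creep in, so I would state each intermediate contraction explicitly rather than collapsing steps.
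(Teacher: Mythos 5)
Your proposal is correct and matches the paper's proof essentially step for step: the paper likewise expands $[X,[Y,Z]](f)$ into $\sum X(u)Y(\vo)Z(\vt)=((X\otimes Y\otimes Z)\circ(\mathord{\id}\otimes\nabla)\circ\nabla)(f)$, applies the co-left distributivity identity \ref{ax:cold} to equate this with $S$, and then invokes Lemmas \ref{lem:sums} and \ref{lem:s1} to obtain $S_1+S_2=[[X,Y],Z](f)+[Y,[X,Z]](f)$. The only difference is that you run the chain of equalities in the opposite direction, which is immaterial.
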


\begin{proof}
    Let $S$, $S_1$, and $S_2$ be the expressions defined in Lemma \ref{lem:sums}. Using the same notation as before, we compute
    \begin{align*}
        [X,[Y,Z]](f) &= (X\otimes [Y,Z])(\nabla f)\\
        &=\sum X(u)(Y\otimes Z)(\nabla v)\\
        &= \sum X(u)Y(\vo)Z(\vt)\\
        &= (X\otimes Y\otimes Z)\left(\sum u\otimes\vo\otimes\vt\right)\\
        &= (X\otimes Y\otimes Z)((\id\otimes\nabla)\circ \nabla )(f)\\
        &= S =S_1+S_2 &\text{by \ref{ax:cold} and  Lemma \ref{lem:sums}}\\
        &=([[X,Y],Z] + [Y,[X,Z]])(f), &\text{by Lemma \ref{lem:s1}}
    \end{align*}
    as desired. Since $f\in A$ was arbitrary, the proof is complete.
\end{proof}

\subsection{Action on morphisms}
To complete the proof of Theorem \ref{thm:main}, it remains to show that the assignment $Q\mapsto\q$ is compatible with morphisms. This is a straightforward verification, but we provide it below for the reader's convenience.

\begin{prop}\label{prop:functor}
    The assignments $Q\mapsto(\q,[\cdot,\cdot])$ (resp.\ $Q\mapsto(\q,\{\cdot,\cdot]\}$) and $\phi\mapsto \phi'\coloneq (d\phi)_e$ define a covariant functor $\Rack(\Aff)\to\leib$ (resp.\ $\Rack(\Aff)\to\Rleib$).
\end{prop}

\begin{proof}
    Once again, we prove the claim for $\Rack(\Aff)\to\leib$; the proof for $\Rack(\Aff)\to\Rleib$ is similar.
    
    Functoriality follows from the chain rule. Therefore, we only have to show that if $\phi\colon Q\to R$ is a morphism of affine algebraic racks, then $\phi'$ is a Leibniz algebra homomorphism $\q\to\mathfrak{r}$, where $(\q,[\cdot,\cdot]_\q)$ and $(\mathfrak{r},[\cdot,\cdot]_\mathfrak{r})$ are the left Leibniz algebras of $Q$ and $R$, respectively. 
    
    Recall that $\phi'$ is defined by
    \[
    \phi'(D)= D\circ\phi^\sharp
    \]
    for all $k$-derivations $D\in\q$.
    Certainly, $\phi'$ is $k$-linear. That $\phi'(\q)\subseteq\mathfrak{r}$ follows from the identification of $\q$ and $\mathfrak{r}$ as tangent spaces at the identity via Proposition \ref{prop:tangent} (though verifying this containment directly is straightforward). 
    
    Thus, it only remains to verify that $\phi'$ preserves the Leibniz bracket. 
    Recall that 
    \[
    \nabla_Q\circ\phi^\sharp=(\phi^\sharp\otimes\phi^\sharp)\circ\nabla_R
    \]
    because $\phi$ is a morphism of algebraic racks. 
    Hence, for all $k$-derivations $D,E\in\q$, we have
    \begin{align*}
        \phi'([D,E]_\q)&=[D,E]_\q\circ \phi^\sharp\\
        &=(D\otimes E)\circ\nabla_Q\circ\phi^\sharp\\
        &=(D\otimes E)\circ(\phi^\sharp\otimes\phi^\sharp)\circ \nabla_R\\
        &=((D\circ\phi^\sharp)\otimes (E\circ\phi^\sharp))\circ\nabla_R\\
        &= (\phi'(D)\otimes\phi'(E))\circ\nabla_R \\
        &=[\phi'(D),\phi'(E)]_\mathfrak{r},
    \end{align*}
    as desired.
\end{proof}

This completes the proof of Theorem \ref{thm:main}.

\begin{cor}\label{cor:invariant}
    The left and right Leibniz algebras $(\q,[\cdot,\cdot]),(\q,\{\cdot,\cdot\})$ are invariants of affine algebraic racks.
\end{cor}

\begin{rmk}
    Since racks are used to develop invariants of knots, 3-manifolds, and various algebro-geometric objects (see Section \ref{sec:open}), Corollary \ref{cor:invariant} may allow for Leibniz algebra-valued invariants to be developed for the same objects.
\end{rmk}

\section{From rack structure to Leibniz algebra structure}\label{sec:from}
As applications of Theorem \ref{thm:main}, we provide several results relating the structure an affine algebraic rack to the structure of its left Leibniz algebra. These results recover the analogous results for the Lie algebras of affine algebraic groups. We leave it to the reader to adapt these results to the right Leibniz algebras of affine algebaic racks. 

In the following, let $Q=\Spec(A)$ and $R=\Spec(B)$ be affine algebraic racks with left Leibniz algebras $\q$ and $\mathfrak{r}$, respectively.

\subsection{Subracks and ideals}
The following results relate closed subracks and left ideals of affine algebraic racks to Leibniz subalgebras and left ideals of their left Leibniz algebras, respectively. These results are shown more or less identically to the analogous results for Lie algebras of affine algebraic groups, but we provide the proofs for the reader's convenience.

\subsubsection{}
Recall that if $(\mathfrak{g},[\cdot,\cdot])$ is a left Leibniz algebra, then a subspace $\mathfrak{h}\leq \mathfrak{g}$ is called a \emph{Leibniz subalgebra} if $[\mathfrak{h},\mathfrak{h}]\subseteq\mathfrak{h}$ and a \emph{left ideal} if $[\mathfrak{g},\mathfrak{h}]\subseteq\mathfrak{h}$. On the other hand, we say that $\mathfrak{g}$ is \emph{abelian} if $[\mathfrak{g},\mathfrak{g}]=0$.

\begin{prop}\label{prop:subalg}
    Let $\phi\colon R\hookrightarrow Q$ be a closed embedding of affine algebraic racks. Then ${(d\phi)_e\colon\mathfrak{r}\to\q}$ embeds $\mathfrak{r}$ into $\mathfrak{q}$ as a Leibniz subalgebra.
\end{prop}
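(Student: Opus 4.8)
The plan is to establish two facts: that $(d\phi)_e$ is injective, and that its image is closed under the Leibniz bracket. The second is essentially free, since Proposition \ref{prop:functor} already tells us that $(d\phi)_e$ is a Leibniz algebra homomorphism; the content lies in injectivity, where the hypothesis that $\phi$ is a \emph{closed} embedding (rather than an arbitrary monomorphism) does the work.

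First I would translate the geometric hypothesis into algebra. Writing $Q=\Spec(A)$ and $R=\Spec(B)$, Corollary \ref{cor:closed} identifies the closed embedding $\phi\colon R\hookrightarrow Q$ with a \emph{surjective} corack algebra homomorphism $\phi^\sharp\colon A\twoheadrightarrow B$. Recalling from the definition of the tangent map that $(d\phi)_e(D)=D\circ\phi^\sharp$ for every $k$-derivation $D\in\mathfrak{r}=\Der_k(B,k)$, injectivity is immediate: if $D\circ\phi^\sharp=0$, then for any $b\in B$ we may choose $a\in A$ with $\phi^\sharp(a)=b$ (using surjectivity) and conclude $D(b)=(D\circ\phi^\sharp)(a)=0$, whence $D=0$.

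Finally, combining injectivity with the homomorphism property supplied by Proposition \ref{prop:functor}, the map $(d\phi)_e$ identifies $\mathfrak{r}$ with its image $(d\phi)_e(\mathfrak{r})\subseteq\q$, which is therefore a Leibniz subalgebra isomorphic to $\mathfrak{r}$; this gives exactly the claimed embedding.

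There is no serious obstacle here; the one point requiring care is that closedness---not mere monomorphism---is what guarantees surjectivity of $\phi^\sharp$ and hence injectivity of the derivative. For a general monomorphism of affine schemes the pullback need not be surjective, so the derivative need not be injective, and this is precisely the feature of closed embeddings that the argument exploits.
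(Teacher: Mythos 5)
Your proof is correct and follows essentially the same route as the paper's: both deduce injectivity of $(d\phi)_e$ from the surjectivity of $\phi^\sharp\colon A\twoheadrightarrow B$ guaranteed by Corollary \ref{cor:closed}, and both invoke Proposition \ref{prop:functor} for the Leibniz homomorphism property. Your write-up merely makes explicit the choice of a preimage $a\in A$ of $b\in B$, which the paper leaves implicit.
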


\begin{proof}
    If $D\in\ker((d\phi)_e)$, then
    \[
    0\equiv (d\phi)_e(D)=D\circ\phi^\sharp
    \]
    as a map $A\to k$.
    Recall from Corollary \ref{cor:closed} that $\phi^\sharp\colon A\twoheadrightarrow B$ is surjective. It follows that $D\equiv 0$, so by Proposition \ref{prop:functor}, $(d\phi)_e$ is an embedding of Leibniz algebras.
\end{proof}

\begin{prop}\label{prop:ideal}
    If $R$ is a left ideal of $Q$, then $\mathfrak{r}$ is a left ideal of $\mathfrak{q}$.
\end{prop}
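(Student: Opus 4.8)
The plan is to descend to coordinate rings and reduce the assertion $[\q,\mathfrak{r}]\subseteq\mathfrak{r}$ to a vanishing condition on the ideal cutting out $R$. Write $\iota\colon R\hookrightarrow Q$ for the closed embedding realizing $R$ as a left ideal, so that by Corollary \ref{cor:closed} the pullback $\iota^\sharp\colon A\twoheadrightarrow B$ is a surjective corack algebra homomorphism; set $I\coloneq\ker(\iota^\sharp)$. By Proposition \ref{prop:subalg} I identify $\mathfrak{r}$ with its image $(d\iota)_e(\mathfrak{r})\subseteq\q$, whose elements are exactly the derivations $E\circ\iota^\sharp$ with $E\in\Der_k(B,k)$. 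The first thing to record is the standard description of this image: since $\iota^\sharp$ is surjective and the common identity $e$ forces $\eps_A=\eps_B\circ\iota^\sharp$, a derivation $D'\in\q$ lies in $(d\iota)_e(\mathfrak{r})$ if and only if $D'(I)=0$. Thus it suffices to prove that $[D,\bar E]$ annihilates $I$ for every $D\in\q$ and every $\bar E=E\circ\iota^\sharp\in\mathfrak{r}$.

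Next I would dualize the left-ideal diagram of Definition \ref{def:ideal2}. The commuting square $\iota\circ\widetilde{\tr}=\tr_Q\circ(\mathord{\id}\times\iota)$ on $Q\times R$ pulls back to the algebra identity
\[
\widetilde{\tr}^\sharp\circ\iota^\sharp=(\mathord{\id}_A\otimes\iota^\sharp)\circ\nabla_Q\colon A\to A\otimes B,
\]
where $\widetilde{\tr}^\sharp\colon B\to A\otimes B$ is the pullback of the structure morphism $\widetilde{\tr}\colon Q\times R\to R$. This identity is the engine of the argument: it says that applying $\nabla_Q$ and then pushing the right-hand tensor factor into $B$ always factors through $\iota^\sharp$. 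With it in hand the computation is immediate. For $D\in\q$ and $\bar E=E\circ\iota^\sharp$, the $k$-linearity of $E$ lets me pull $\iota^\sharp$ out of the second slot of the convolution and then invoke the displayed identity:
\[
[D,\bar E]=(D\otimes\bar E)\circ\nabla_Q=(D\otimes E)\circ(\mathord{\id}_A\otimes\iota^\sharp)\circ\nabla_Q=\bigl((D\otimes E)\circ\widetilde{\tr}^\sharp\bigr)\circ\iota^\sharp.
\]
Hence $[D,\bar E]$ factors through $\iota^\sharp$ and so vanishes on $I=\ker(\iota^\sharp)$. Because $\q$ is a Leibniz algebra (Proposition \ref{prop:adj}), $[D,\bar E]$ is itself a $k$-derivation $A\to k$; being a derivation that kills $I$, it lies in $(d\iota)_e(\mathfrak{r})=\mathfrak{r}$ by the criterion above. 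As $D$ and $\bar E$ were arbitrary, $[\q,\mathfrak{r}]\subseteq\mathfrak{r}$, which is exactly the claim.

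The main obstacle is a subtlety in this final step. One is tempted to read membership in $\mathfrak{r}$ straight off the factorization $[D,\bar E]=F\circ\iota^\sharp$, where $F\coloneq(D\otimes E)\circ\widetilde{\tr}^\sharp\colon B\to k$. This is \emph{not} immediate, since $F$ is not visibly a derivation: the functional $D\otimes E$ fails the Leibniz rule on $A\otimes B$, so nothing in the formula for $F$ guarantees it on $B$. The fix is to decouple the two properties rather than verify $F$ is a derivation by hand. The factorization is used only to see that $[D,\bar E]$ annihilates $I$, while the fact that $[D,\bar E]$ is a derivation of $A$ comes for free from closure of the bracket on $\q$. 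Surjectivity of $\iota^\sharp$—that is, closedness of the embedding, via Corollary \ref{cor:closed}—then forces any derivation of $A$ vanishing on $I$ to descend to a genuine derivation of $B\cong A/I$, which is precisely what it means to lie in $\mathfrak{r}$.
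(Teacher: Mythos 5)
Your proof is correct, but it takes a genuinely different route from the paper's. The paper argues via the adjoint representation: since $R$ is a left ideal, each $L_x$ restricts to an automorphism of $R$, so $\Ad$ factors through $\GL(\mathfrak{r})$ and hence $\ad=(d\Ad)_e$ lands in $\mathfrak{gl}(\mathfrak{r})=\operatorname{End}(\mathfrak{r})$; Proposition \ref{prop:adj} ($\ad_D(E)=[D,E]$) then finishes the argument. You instead dualize the left-ideal diagram of Definition \ref{def:ideal2} to the corack-algebra identity $\widetilde{\tr}^\sharp\circ\iota^\sharp=(\mathord{\id}_A\otimes\iota^\sharp)\circ\nabla_Q$ and compute the convolution directly, showing $[D,\bar E]$ factors through $\iota^\sharp$ and hence kills $I=\ker(\iota^\sharp)$. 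Both identifications of $\mathfrak{r}$ inside $\q$ tacitly use that the left ideal is \emph{closed} (so that $\iota^\sharp$ is surjective, per Corollary \ref{cor:closed} and Proposition \ref{prop:subalg}); you make this dependence explicit, and you correctly flag and resolve the one real subtlety—that $(D\otimes E)\circ\widetilde{\tr}^\sharp$ is not visibly a derivation of $B$—by decoupling "kills $I$" (from the factorization) from "is a derivation of $A$" (from closure of the bracket, Proposition \ref{prop:adj}). What each approach buys: the paper's is shorter once the adjoint machinery is in place and directly parallels the classical algebraic-group argument, though it leans on viewing $\Ad$ as a scheme morphism into $\GL(\mathfrak{r})$ and compatibility of the two differentials; yours is purely algebraic, avoids tangent-space bookkeeping entirely, and makes transparent exactly where the structure morphism $\widetilde{\tr}$ of the left ideal enters. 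Your argument would fit naturally alongside the paper's convolution-based proofs of Lemmas \ref{lem:sums} and \ref{lem:s1}.
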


\begin{proof}
    Consider the adjoint representations $\Ad$ and $\ad$ from Definition \ref{def:ad2}.
    By Proposition \ref{prop:adj}, it suffices to show that for all $k$-derivations $D\in\q$, the adjoint map $\ad_D\colon\q\to\q$ restricts to an endomorphism of $\mathfrak{r}$.
     
    By assumption, for all points $x\in Q$, the automorphism $L_x$ from \eqref{eq:lx} restricts to an automorphism of $R$, so its derivative $(dL_x)_e$ restricts to an automorphism of $\mathfrak{r}$. This allows us to view $\Ad$ as a map $Q\to \GL(\mathfrak{r})$. Hence, $\ad$ can be viewed as a map $\q\to\mathfrak{gl}(\mathfrak{r})$. Since $\mathfrak{gl}(\mathfrak{r})=\operatorname{End}(\mathfrak{r})$, this completes the proof.
\end{proof}

\begin{ex}
    Let $\phi\colon H\hookrightarrow G$ be a closed embedding of affine algebraic groups. If ${Q\coloneq \Conj(G)}$ and $R\coloneq \Conj(H)$ are the corresponding conjugation quandle schemes, then it is easy to see that $\phi\colon R\hookrightarrow Q$ is also a closed embedding of affine algebraic quandles.
    
 Thus, Proposition \ref{prop:subalg} recovers the well-known fact (see, for example, \cite{waterhouse}*{Sec.\ 12.2}) that $(d\phi)_e$ embeds $\mathfrak{r}=\operatorname{Lie}(H)$ into $\mathfrak{q}=\operatorname{Lie}(G)$. Likewise, Proposition \ref{prop:ideal} recovers the fact (see, for example, \cite{waterhouse}*{Ex.\ 12-8}) that if $\phi(H)$ is normal in $G$, then $(d\phi)_e(\mathfrak{r})$ is a two-sided ideal of $\mathfrak{q}$. This is because every left ideal of a Lie algebra is necessarily a two-sided ideal.
\end{ex}

\subsubsection{}
Recall that the \emph{left center} of a left Leibniz algebra $(\mathfrak{g},[\cdot,\cdot])$ is the subset
\[
Z_\ell(\mathfrak{g})\coloneq\{X\in \mathfrak{g}\mid [X,\mathfrak{g}]=0\},
\]
which is an abelian left ideal of $\mathfrak{g}$.\footnote{One may analogously define a \emph{right center} $Z_r(\mathfrak{g})$, but $Z_r(\mathfrak{g})$ is not a left Leibniz subalgebra of $\mathfrak{g}$ in general.} Note that if $\mathfrak{g}$ is a Lie algebra, then $Z_\ell(\mathfrak{g})$ is just the center of $\mathfrak{g}$ as a Lie algebra. 

The following is a consequence of Propositions \ref{prop:center}, \ref{prop:adj}, and \ref{prop:ideal}.
\begin{cor}\label{cor:center}
    Let $Q$ be an affine algebraic rack with Leibniz algebra $\q$. Then the functor in Theorem \ref{thm:main} sends the center $Z(Q)$ to a left ideal of $\q$ contained in the left center $Z_\ell(\q)$.
\end{cor}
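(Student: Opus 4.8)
The plan is to split the assertion into its two halves, since the structure of the corollary pairs naturally with the two propositions it invokes. By Proposition \ref{prop:center}, the center $Z(Q)$ is a closed algebraic left ideal of $Q$, so I would apply the functor of Theorem \ref{thm:main} to the closed embedding $\iota\colon Z(Q)\hookrightarrow Q$. Proposition \ref{prop:subalg} then realizes $(d\iota)_e$ as an embedding of the Leibniz algebra $\mathfrak{z}$ of $Z(Q)$ into $\q$, and Proposition \ref{prop:ideal}, applied to the left ideal $Z(Q)$, shows that its image is a left ideal of $\q$. This disposes of the first half of the statement essentially for free; the genuine content is the finer claim that the image lands inside the left center $Z_\ell(\q)$.

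For the containment in $Z_\ell(\q)$, I would exploit the defining property of the center together with Proposition \ref{prop:adj}. Every point $x\in Z(Q)$ satisfies $L_x=\id$ by definition, so $\Ad_x=(dL_x)_e=\id$, and hence the composite $\Ad\circ\iota\colon Z(Q)\to\GL(\q)$ is the constant morphism with value $\id$. The tangent map of a constant morphism vanishes, so the chain rule yields
\[
\ad\circ (d\iota)_e=(d(\Ad\circ\iota))_e=0 .
\]
Setting $D'\coloneq (d\iota)_e(D)$ for $D\in\mathfrak{z}$, this says $\ad_{D'}=0$ in $\mathfrak{gl}(\q)$. Invoking Proposition \ref{prop:adj}, which identifies $\ad_{D'}(E)$ with $[D',E]$, we conclude $[D',E]=0$ for every $E\in\q$, i.e.\ $D'\in Z_\ell(\q)$, as required.

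The step I expect to demand the most care is the assertion that $\Ad\circ\iota$ is the constant morphism, not merely constant on closed points, so that its differential is genuinely zero. I would argue this functorially: for every $k$-algebra $R$ and every $R$-point $x\in Z(Q)(R)$, the defining condition forces $L_x=\id_{Q(R)}$ and hence $\Ad_x=\id$; since this holds uniformly in $R$, the morphism $\Ad\circ\iota$ factors through $\Spec(k)\xrightarrow{\id}\GL(\q)$. After that the argument is pure bookkeeping with the chain rule. As a sanity check one can bypass the adjoint representation and argue directly from the description $Z(Q)=\Spec(A/I)$ with $\nabla f\equiv 1\otimes f\pmod{I\otimes A}$ from the proof of Proposition \ref{prop:center}: since $D'$ factors through $A/I$ it kills $I$ and constants, so writing $\nabla f=1\otimes f+w$ with $w\in I\otimes A$ gives $[D',E](f)=(D'\otimes E)(\nabla f)=D'(1)E(f)+(D'\otimes E)(w)=0$. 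I would nonetheless present the chain-rule version as the main proof, since it keeps the argument inside the adjoint-representation framework of the paper.
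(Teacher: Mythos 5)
Your argument is correct and is exactly the intended one: the paper gives no written proof beyond citing Propositions \ref{prop:center}, \ref{prop:adj}, and \ref{prop:ideal}, and your unwinding (closed left ideal $\Rightarrow$ Leibniz left ideal via Propositions \ref{prop:subalg} and \ref{prop:ideal}; $L_x=\id$ on $Z(Q)$ functorially $\Rightarrow$ $\Ad\circ\iota$ constant $\Rightarrow$ $\ad\circ(d\iota)_e=0$ $\Rightarrow$ image in $Z_\ell(\q)$ by Proposition \ref{prop:adj}) is precisely what those citations encode. Your direct corack-algebra verification using $\nabla f\equiv 1\otimes f\pmod{I\otimes A}$ is a nice independent check but not needed.
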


\begin{rmk}
    Corollary \ref{cor:center} has also been shown for Lie racks; see \cite{larosa2}*{Sec.\ 4}.
\end{rmk}

In particular, Example \ref{ex:center}, Remark \ref{rmk:recover}, and Corollary \ref{cor:center} recover the fact that if $G$ is an affine algebraic group, then $\operatorname{Lie}(Z(G))$ is an ideal of $ \operatorname{Lie}(G)$ contained in the center of $\operatorname{Lie}(G)$. Note that $\operatorname{Lie}(Z(G))$ may be strictly smaller than the center; for example, see \cite{milne}*{10.35}.

\subsection{Abelian conditions}
Finally, we provide sufficient conditions under which $\q$ is abelian. The first condition generalizes the fact that Lie algebras of abelian affine algebraic groups are abelian (see, for example, \cite{waterhouse}*{Ex.\ 12-3}).

\begin{prop}\label{prop:inv}
    If $Q=\Spec(A)$ is involutory and $\operatorname{char}(k)\neq 2$, then $\q$ is abelian.
\end{prop}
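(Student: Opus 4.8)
The plan is to exploit the involutory hypothesis through the linear map $\psi_D=(D\otimes\id)\circ\nabla$ from \eqref{eq:psi-d} together with the dual-number point $\partial D$, avoiding any direct manipulation of the corack operations. I would start from the set-theoretic characterization: a pointed rack is involutory exactly when $x\tr(x\tr y)=y$, that is, when $L_x^2=\id$ for every point $x$. Since $Q$ is an involutory rack object, the functor of points shows that $Q(R)$ is an involutory rack for every $k$-algebra $R$; taking $R=k[\delta]$ and $x=\partial D$ then gives $L_{\partial D}^2=\id$ as an automorphism of $Q(k[\delta])$.

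Next I would dualize. Because $L_{\partial D}$ is the left-multiplication automorphism of the base-changed rack whose pullback was computed earlier, contravariance of $(-)^\sharp$ turns $L_{\partial D}^2=\id$ into $(L_{\partial D}^\sharp)^2=\id$ on $A[\delta]=k[\delta]\otimes_k A$. Lemma \ref{lem:ld-full} (with $n=0$) identifies $L_{\partial D}^\sharp$ on $A$ with $f\mapsto f+\delta\psi_D(f)$. Squaring this and discarding the $\delta^2$ term gives, for every $f\in A$,
\[
(L_{\partial D}^\sharp)^2(f)=f+2\delta\psi_D(f).
\]
Comparing with $(L_{\partial D}^\sharp)^2=\id$ forces $2\psi_D(f)=0$ in $A$, and since $\operatorname{char}(k)\neq 2$ this yields $\psi_D=0$. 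Finally \eqref{eq:comm} gives $[D,E]=E\circ\psi_D=0$ for all $D,E\in\q$, so $\q$ is abelian.

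The one step I expect to require care is the transfer of the involutory identity to the dual-number point and its dualization: I must make sure that $L_{\partial D}$ is exactly the automorphism whose pullback $L_{\partial D}^\sharp$ was computed in Lemma \ref{lem:ld-full}, so that $L_{\partial D}^2=\id$ legitimately becomes $(L_{\partial D}^\sharp)^2=\id$; the remaining arithmetic is just the $\delta^2=0$ bookkeeping. An equivalent route runs through the adjoint representation: involutory gives $\Ad_x^2=\id$ for all $x$ (using $L_x(e)=e$ and the chain rule), whence $\Ad_{\partial D}=\id+\delta\,\ad_D$ squares to $\id+2\delta\,\ad_D$, and Proposition \ref{prop:adj} again forces $\ad_D=[D,\cdot]=0$.
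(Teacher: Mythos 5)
Your argument is correct and is essentially the paper's own proof: both reduce to showing $\psi_D\equiv 0$ via \eqref{eq:comm}, apply Lemma \ref{lem:ld-full} to compute $(L_{\partial D}^\sharp)^2(f)=f+2\delta\psi_D(f)$, and invoke $\operatorname{char}(k)\neq 2$. The only difference is that you spell out explicitly the (correct) step that involutority of $Q(k[\delta])$ gives $L_{\partial D}^2=\id$ and hence $(L_{\partial D}^\sharp)^2=\id$, which the paper leaves implicit.
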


\begin{proof}
    Fix a $k$-derivation $D\in\q$, define $\psi_D\colon A\to A$ as in \eqref{eq:psi-d}, and recall that $k[\delta]= k[\delta]/(\delta^2)$ denotes the dual numbers. By \eqref{eq:comm}, it suffices to show that $\psi_D\equiv 0$. Define $\partial D\in Q(k[\delta])$ as in Proposition \ref{prop:tangent}, and let $L^\sharp_{\partial D}$ be the induced automorphism of $k[\delta]\otimes_k A\cong A[\delta]/(\delta^2)$ from \eqref{eq:lx-sharp-full}. 
    
    Since $Q$ is involutory, Lemma \ref{lem:ld-full} shows that
    \[
    f= (L^\sharp_{\partial D}\circ L^\sharp_{\partial D})(f)=f+2\delta\psi_D(f)
    \]
    for all functions $f\in A$. It follows that $2\psi_D\equiv 0$; since $\operatorname{char}(k)\neq 2$, this completes the proof.
\end{proof}

\begin{ex}\label{ex:triv-la}
    Let $Q=\Spec(A)$ be a trivial affine quandle scheme (see Example \ref{ex:triv-1}); in particular, $Q$ is involutory. Unsurprisingly, $\q$ is abelian even when $\operatorname{char}(k)=2$. Indeed, for all $k$-derivations $D,E\in\q$ and functions $f\in A$, Example \ref{ex:triv-2} shows that
    \[
    [D,E](f)=(D\otimes E)(\nabla f)=(D\otimes E)(1\otimes f)=D(1) E(f)=0
    \]
    because $D(1)=0$.
\end{ex}

\subsubsection{}
Proposition \ref{prop:inv} and Example \ref{ex:triv-la} recover the following due to Example \ref{ex:conj} and Remark~\ref{rmk:recover}.

\begin{cor}\label{cor:inv}
    Let $G$ be an affine algebraic group. If $G$ is abelian, then $\operatorname{Lie}(G)$ is also abelian. If $\operatorname{char}(k)\neq 2$, then (more generally) $\operatorname{Lie}(G)$ is abelian if $g^2\in Z(G)$ for all $g\in G$.
\end{cor}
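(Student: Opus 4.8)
The plan is to reduce both claims to results already established for conjugation quandle schemes—namely Proposition \ref{prop:inv} and Example \ref{ex:triv-la}—via the dictionary supplied by Example \ref{ex:conj} and Remark \ref{rmk:recover}. Throughout, set $Q\coloneq\Conj(G)$; by Remark \ref{rmk:recover} the Leibniz algebra of $Q$ in the sense of Theorem \ref{thm:main} is precisely $\operatorname{Lie}(G)$, so it suffices to read off the rack structure of $Q$ in each case and invoke the appropriate abelianness criterion.

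For the second statement, I would first recall from Example \ref{ex:conj} that $\Conj(G)$ is involutory exactly when $g^2\in Z(G)$ for all $g\in G$. Under the hypothesis $\operatorname{char}(k)\neq 2$, Proposition \ref{prop:inv} then immediately yields that the Leibniz algebra of the involutory affine algebraic quandle $Q$ is abelian, and by the identification above this Leibniz algebra is $\operatorname{Lie}(G)$.

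For the first statement, the key observation is that when $G$ is abelian the conjugation action is not merely involutory but \emph{trivial}: $g\tr h=ghg\inv=h$ for all $g,h\in G$, so $Q=\Conj(G)$ is a trivial affine quandle scheme in the sense of Example \ref{ex:triv-1}. I would therefore appeal to Example \ref{ex:triv-la} rather than Proposition \ref{prop:inv}, concluding that $\operatorname{Lie}(G)$ is abelian with no restriction on the characteristic.

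The content here is essentially bookkeeping, so there is no serious obstacle; the one point worth flagging is the reason the two statements carry different characteristic hypotheses. The $g^2\in Z(G)$ case relies on Proposition \ref{prop:inv}, whose proof divides $2\psi_D\equiv 0$ by $2$ and hence genuinely requires $\operatorname{char}(k)\neq 2$. The abelian case escapes this restriction precisely because the stronger structural fact—that $Q$ is a trivial quandle, not just involutory—lets us invoke the characteristic-free computation of Example \ref{ex:triv-la}, where $[D,E](f)=D(1)E(f)=0$ directly from $D(1)=0$.
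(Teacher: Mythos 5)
Your proposal is correct and matches the paper's own derivation exactly: the paper obtains this corollary by combining Proposition \ref{prop:inv} and Example \ref{ex:triv-la} with Example \ref{ex:conj} and Remark \ref{rmk:recover}, which is precisely your reduction (trivial quandle scheme for the abelian case, involutory quandle for the $g^2\in Z(G)$ case). Your remark on why only the second statement needs $\operatorname{char}(k)\neq 2$ is also consistent with the paper.
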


\begin{rmk}
    The second part of Corollary \ref{cor:inv} (and, hence, Proposition \ref{prop:inv}) fails if the assumption that $\operatorname{char}(k)\neq 2$ is dropped. For example, let $G$ be the Heisenberg group. If $\operatorname{char}(k)=2$, then $g^2\in Z(G)$ for all $g\in G$. However, the Lie algebra of $G$ is the $3$-dimensional Heisenberg Lie algebra, which is not abelian. 
    The converse of Corollary \ref{cor:inv} also does not hold; for example, see \cite{milne}*{10.35}.
\end{rmk}

Finally, combining Propositions \ref{prop:aff-racks}, \ref{prop:coass}, and \ref{prop:cocomm} with Example \ref{ex:triv-la} yields the following.
\begin{prop}
    Let $Q$ be an affine algebraic rack whose left-distributive operation $\tr$ is either associative, commutative, or equal to $\tr\inv$. Then the Leibniz algebra of $Q$ is abelian.
\end{prop}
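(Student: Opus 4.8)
The plan is to reduce all three hypotheses to the trivial quandle computation of Example \ref{ex:triv-la}, using the equivalence $\Rack(\Aff)\bij(\Corack(\Alg))\op$ of Proposition \ref{prop:aff-racks} to translate each condition on $\tr$ into a statement about the corack operation $\nabla=\tr^\sharp$ of $A$. First I would set $Q=\Spec(A)$ and recall that $\nabla=\tr^\sharp$. Since the contravariant functor $\phi\mapsto\phi^\sharp$ reverses composition and carries products to tensor products, associativity of $\tr$ (that is, $\tr\circ(\tr\times\id)=\tr\circ(\id\times\tr)$) dualizes exactly to coassociativity of $\nabla$ (that is, $(\nabla\otimes\id)\circ\nabla=(\id\otimes\nabla)\circ\nabla$); likewise, commutativity of $\tr$ is precisely the assertion that $A$ is cocommutative, and the identity $\tr=\tr\inv$ is precisely the assertion that $\nabla=\nabla\inv$.

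Next I would dispatch the three cases. If $\tr$ is associative, then $\nabla$ is coassociative, so Proposition \ref{prop:coass} shows that $A$ is isomorphic to a trivial quandle corack algebra; equivalently, $Q$ is isomorphic to a trivial quandle scheme, and Example \ref{ex:triv-la} together with Corollary \ref{cor:invariant} gives that $\q$ is abelian. If $\tr$ is commutative, then $A$ is cocommutative, which is condition \ref{item:cocomm} of Proposition \ref{prop:cocomm}, and the final clause of that proposition forces $\nabla=1\otimes\id$. If $\tr=\tr\inv$, then $\nabla=\nabla\inv$, which is condition \ref{item:isotropic} of Proposition \ref{prop:cocomm}, and this again forces $\nabla=1\otimes\id$. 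In each case we arrive at $\nabla=1\otimes\id$ (or an isomorphic copy), whereupon the one-line calculation of Example \ref{ex:triv-la}---namely $[D,E](f)=(D\otimes E)(1\otimes f)=D(1)E(f)=0$, using $D(1)=0$---shows that $\q$ is abelian.

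There is essentially no hard step here: the statement is a corollary assembled entirely from facts proven earlier. The only point requiring care is the associative case, where the duality between associativity of $\tr$ and coassociativity of $\nabla$ must be made explicit through Proposition \ref{prop:aff-racks}, and where Proposition \ref{prop:coass} yields only an isomorphism to the trivial quandle form rather than the equality $\nabla=1\otimes\id$ on the nose. For this reason I would invoke Corollary \ref{cor:invariant} to conclude abelian-ness, since being abelian is manifestly an isomorphism invariant of the Leibniz algebra.
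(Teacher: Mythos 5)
Your proposal is correct and follows exactly the route the paper takes: the paper presents this proposition as an immediate consequence of Propositions \ref{prop:aff-racks}, \ref{prop:coass}, and \ref{prop:cocomm} together with Example \ref{ex:triv-la}, which is precisely the case analysis you carry out. Your extra care in the associative case (invoking Corollary \ref{cor:invariant} because Proposition \ref{prop:coass} only yields an isomorphism to the trivial-quandle form) is a welcome refinement of a detail the paper leaves implicit.
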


\section{Yang--Baxter operators}\label{sec:ybo}

The next three sections of this paper are dedicated to studying the relationship between rack schemes $\Racks(\Sch
)$ and the Yang--Baxter equation. 
In this section, we recall the definition of Yang--Baxter operators in tensor categories introduced in \cite{joyal}; see \cite{kassel}*{p.\ 323} for a general reference and cf.\ \citelist{\cite{lebed}\cite{guccione}}.

\subsection{}
Let $(\C,\otimes)$ be a monoidal category. A \emph{Yang--Baxter operator} or \emph{braided object} in $\C$ is a pair $(X,r)$ where $X$ is an object in $\C$ and $r\in\operatorname{Aut}(X^2)$ is an automorphism that makes the following diagram commute:

% https://q.uiver.app/#q=WzAsNixbMCwwLCJYXjMiXSxbMCwxLCJYXjMiXSxbMSwwLCJYXjMiXSxbMiwwLCJYXjMiXSxbMSwxLCJYXjMiXSxbMiwxLCJYXjMiXSxbMCwyLCJyXFx0aW1lcyBcXGlkIl0sWzMsNSwiclxcdGltZXMgXFxpZCJdLFsyLDMsIlxcaWRcXHRpbWVzIHIiXSxbMCwxLCJcXGlkXFx0aW1lcyByIiwyXSxbNCw1LCJcXGlkXFx0aW1lcyByIiwyXSxbMSw0LCJyXFx0aW1lcyBcXGlkIiwyXV0=
\begin{equation}\label{dia:yb}
    \begin{tikzcd}
	{X^3} & {X^3} & {X^3} \\
	{X^3} & {X^3} & {X^3}
	\arrow["{r\otimes \id}", from=1-1, to=1-2]
	\arrow["{\id\otimes r}"', from=1-1, to=2-1]
	\arrow["{\id\otimes r}", from=1-2, to=1-3]
	\arrow["{r\otimes \id}", from=1-3, to=2-3]
	\arrow["{r\otimes \id}", from=2-1, to=2-2]
	\arrow["{\id\otimes r}", from=2-2, to=2-3]
\end{tikzcd}
\end{equation}

Moreover, if $(\C,\times)$ is cartesian monoidal and $(X,r)$ is a Yang--Baxter operator in $\C$, then let $\pi_1,\pi_2\colon X^2\to X$ denote the projection morphisms. Define endomorphisms $r_1,r_2\colon X^2\to X^2$ by
\begin{equation}\label{eq:r1}
    r_1\coloneq (\pi_1,\pi_1\circ r),\qquad r_2\coloneq (\pi_2\circ r,\pi_2),
\end{equation}
so that \[
r=(\pi_2\circ r_1,\pi_1\circ r_2).
\]
    We say that $(X,r)$ is \emph{nondegenerate} if $r_1$ and $r_2$ are invertible; cf.\ \cite{guccione}*{Prop.\ 3.3}. 
    
    If $(\C,\otimes)$ is cocartesian monoidal, then we dually define \emph{co-nondegenerate} Yang--Baxter operators.
    
For example, if $\C$ is a braided monoidal category and $X$ is an object in $\C$, consider the braiding $\tau\colon X^2\bij X^2$. Then the pair $(X,\tau)$ is a Yang--Baxter operator; if $\C$ is cartesian (resp.\ cocartesian) monoidal, then $(X,\tau)$ is nondegenerate (resp.\ co-nondegenerate).

\subsection{}
A \emph{morphism of Yang--Baxter operators} $(X,r)\to (Y,s)$ is a morphism $\phi\colon X\to Y$ in $\C$ that makes the following diagram commute:
% https://q.uiver.app/#q=WzAsNCxbMCwwLCJYXjIiXSxbMCwxLCJZXjIiXSxbMSwwLCJYXjIiXSxbMSwxLCJZXjIiXSxbMCwyLCJyIl0sWzAsMSwiXFxwaGlcXG90aW1lc1xccGhpIiwyXSxbMSwzLCJzIiwyXSxbMiwzLCJcXHBoaVxcb3RpbWVzXFxwaGkiXV0=
\[\begin{tikzcd}
	{X^2} & {X^2} \\
	{Y^2} & {Y^2}
	\arrow["r", from=1-1, to=1-2]
	\arrow["{\phi\otimes\phi}"', from=1-1, to=2-1]
	\arrow["{\phi\otimes\phi}", from=1-2, to=2-2]
	\arrow["s", from=2-1, to=2-2]
\end{tikzcd}\]

Thus, Yang--Baxter operators in $\C$ form a category $\YB(\C)$. If $\C$ is cartesian monoidal (resp.\ cocartesian monoidal), then let $\nd(\C)$ be the full subcategory of $\YB(\C)$ whose objects are nondegenerate (resp.\ co-nondegenerate). 

The following is straightforward.
\begin{prop}\label{prop:yb-op}
    Let $\C$ and $\D$ be symmetric monoidal categories, and let $F\colon \C\to\D$ be a strong monoidal equivalence (resp.\ anti-equivalence) of categories. Then $F$ induces an equivalence (resp.\ anti-equivalence) of categories $\YB(\C)\bij\YB(\D)$. In particular, $\YB(\C)\cong\YB(\C\op)$. 
    
    Moreover, if $\C$ is (co)cartesian monoidal, then $F$ induces an equivalence (resp.\ anti-equivalence) of categories $\nd(\C)\bij\nd(\D)$. 
\end{prop}

\subsection{}
For example, if $\C=(\Set,\times)$, then $\YB(\C)$ is the category of \emph{braided sets} or \emph{set-theoretic solutions} to the Yang--Baxter equation. This category was introduced by Drinfeld \cite{drinfeld} in 1992 and has enjoyed significant study since then, often in relation to racks. See, for example, \citelist{\cite{etingof}\cite{etingof2}\cite{grana}}.

Given a braided set $(X,r)$, authors often denote 
\[
r(x,y)\eqcolon(\sigma_x(y),\tau_y(x)).
\]
Then $(X,r)$ is nondegenerate in the above sense if and only if for all $x\in X$, the maps $\sigma_x,\tau_x\colon X\to X$ are permutations. The latter condition is the usual definition of nondegeneracy for braided sets.

\begin{com}\label{com:yb-motivations}
We discuss several motivations for studying Yang--Baxter operators at this level of generality. 
    For symmetric monoidal categories $\C$, composition with $\tau$ provides a well-known one-to-one correspondence between Yang--Baxter operators $\YB(\C)$ and solutions to the \emph{quantum Yang--Baxter equation} in $\C$; cf.\ \cite{guccione}*{Prop.\ 3.9}. 
    
In particular, solutions in the category of vector spaces $\C=(\mathsf{Vect}_k,\otimes)$, which are often called \emph{$R$-matrices}, were introduced by Yang \cite{yang} in 1967 and Baxter \cite{baxter} in 1972 in the context of statistical mechanics and quantum many-body theory; see \cite{kassel}*{Ch.\ VIII} for an introduction to $R$-matrices. 
Aside from their importance in physics and the theory of quantum groups, $R$-matrices are the original motivation for braided sets. Indeed, the free vector space functor sends every braided set to an $R$-matrix; see, for example, \citelist{\cite{drinfeld}\cite{etingof}}. 

Various authors have studied Yang--Baxter operators in more complicated symmetric monoidal categories. These include, for example, the category of irreducible complex algebraic varieties and birational morphisms \cite{etingof2} and $(\mathsf{Cat},\times)$, the category of small categories and functors \cite{jiang}. These examples motivate the general categorical framework discussed in this section and the sequel.
\end{com}

\section{From rack objects to Yang--Baxter operators}\label{sec:from2}
In this section, let $(\C,\times)$ be a cartesian monoidal category.
In 2004, Crans \cite{crans}*{Sec.\ 3.1.4} used rack objects to construct Yang--Baxter operators in $(\C,\times)$. This work vastly generalized a 1988 construction of Brieskorn \cite{brieskorn} (see \cite{grana}*{Sec.\ 5.1}) in $\Set$, and it was itself generalized to the category of cocommutative coalgebras in a symmetric monoidal category in \citelist{\cite{carter}\cite{lebed}\cite{guccione}}. In this section, we show that the construction of Crans induces a fully faithful functor $F\colon \Racks(\C)\to \YB^{\mathrm{nd}}(\C)$.

\subsection{}
Recall that for all objects $X$ in $\C$, the braiding $X^2\bij X^2$ is denoted by $\tau=(\pi_2,\pi_1)$. Attempt to define $F\colon \Racks(\C)\to \YB^{\mathrm{nd}}(\C)$ on objects by assigning each rack object $(X,\tr,\tr\inv)$ to the pair $(X,r)$, where $r\colon X^2\to X^2$ is the morphism
\[
r\coloneq (\pi_1,\tr)\circ\tau.
\]
(This is a more concise formula for the braiding $B$ in \cite{crans}*{Lem.\ 81}.) 
Also, let $F$ fix all morphisms. 

The remainder of this section is dedicated to proving the following. 

\begin{thm}\label{thm:yb}
    The assignment $F\colon\Racks(\C)\to\YB^{\mathrm{nd}}(\C)$ is a fully faithful functor.
\end{thm}

\begin{proof}
    We prove the theorem piecemeal in Propositions \ref{prop:yb1}--\ref{prop:yb2}.
\end{proof}

\subsection{}
First, we review the action of $F$ on objects. As discussed in \cite{guccione}*{Prop.\ 4.4}, the following two results can be shown for cocommutative coalgebras in symmetric monoidal categories using string diagrams. For the reader's convenience, we give independent symbolic proofs in the cartesian monoidal case.

In the following, let $(X,\tr,\tr\inv)$ be a rack object in $\C$, and let $(X,r)$ denote its image under $F$. 

\begin{prop}[\cite{guccione}*{Prop.\ 4.4}]\label{prop:yb1}
    $(X,r)$ is a Yang--Baxter operator.
\end{prop}

\begin{proof}
    First, we have to show that $r\colon X^2\to X^2$ is invertible. To that end, define a morphism $r\inv\colon X^2\to X^2$ by
    \[
    r\inv\coloneq (\tr\inv,\pi_2)\circ\tau.
    \]
    The reader can verify from the invertability axiom \eqref{ax:inv} that $r$ and $r\inv$ are mutually inverse.

    It remains to show that the diagram \eqref{dia:yb} commutes. Define morphisms $\phi,\psi\colon X^3\to X^3$ by
    \[
    \phi\coloneq (r\times\id)(\id\times r)(r\times\id),\qquad \psi\coloneq (\id\times r)(r\times\id)(\id\times r),
    \]
    and let $p_1,p_2,p_3\colon X^3\to X$ be the projection morphisms. 
    
    It suffices to show that $p_i\circ \phi=p_i\circ\psi$ for all $i=1,2,3$. 
    Indeed, the reader can verify that
    \[
    p_1\circ\phi=p_3=p_1\circ\psi,\qquad p_2\circ\phi=\tr\circ(p_3,p_2)=p_2\circ\psi.
    \]
    
    Moreover, consider the morphism $\sigma\coloneq(p_3,p_2,p_1)$. The reader can verify that
    \[
    p_3\circ\phi=\tr\circ(p_3,\tr\circ(p_2,p_1))=\tr\circ(\id\times\tr)\circ\sigma
    \]
    and
    \[
    p_3\circ\psi=\tr\circ(\tr\circ(p_3,p_2),\tr\circ(p_3,p_1))=\tr\circ(\id\times\tr)\circ((\tr,\pi_1)\times\id)\circ\sigma.
    \]
    By the left-distributivity axiom \eqref{ax:ld}, these two expressions are equal.
\end{proof}

\begin{prop}[\cite{guccione}*{Prop.\ 4.4}]
    $(X,r)$ is nondegenerate.
\end{prop}

\begin{proof}
    Define $r_1,r_2\colon X^2\to X^2$ as in \eqref{eq:r1}. Then
    \[
    r_1=(\pi_1,\pi_1\circ(\pi_1,\tr)\circ\tau)=(\pi_1,\pi_1\circ\tau)=(\pi_1,\pi_2)=\id_{X^2},
    \]
    which is invertible. Furthermore, since $r$ is invertible (see Proposition \ref{prop:yb1}), the composite
    \[
    r_2=(\pi_2\circ(\pi_1,\tr)\circ\tau,\pi_2)=(\tr\circ\tau,\pi_2)=(\tr,\pi_1)\circ\tau=\tau\circ(\pi_1,\tr)\circ\tau=\tau\circ r
    \]
    is also invertible.
\end{proof}

\subsection{}
Next, we study the action of $F$ on morphisms. The following two results are known in the special case that $\C=\Set$; see the end of \cite{grana}*{Sec.\ 5.1}. However, the general forms of these results appear to be new.

In the following, let $(X,\tr_X,\tr_X\inv)$ and $(Y,\tr_Y,\tr_Y\inv)$ be rack objects in $\C$, and let $(X,r)$ and $(Y,s)$ denote their respective images under $F$. 

\begin{prop}\label{prop:f-mor}
    $F$ sends morphisms of rack objects to morphisms of Yang--Baxter operators.
\end{prop}

\begin{proof}
    Let $\phi\colon X\to Y$ be a morphism in $\Racks(\C)$. We have to show that $\phi$ is also a morphism in $\YB(\C)$. By the definition of $\phi\times\phi=(\phi\circ\pi_1,\phi\circ\pi_2)$, we have $\phi\circ\pi_i=\pi_i\circ(\phi\times\phi)$ for $i=1,2$. Thus,
    \begin{align*}
        (\phi\times\phi)\circ r &= (\phi\circ\pi_1,\phi\circ\tr_X)\circ\tau\\
        &= (\pi_1\circ (\phi\times \phi),\tr_Y\circ(\phi\times\phi))(\pi_2,\pi_1)\\
        &= (\pi_1,\tr_Y)(\pi_2,\pi_1)(\phi\times\phi)\\
        &= (\pi_1,\tr_Y)\circ\tau\circ(\phi\times\phi)\\
        &= s\circ(\phi\times\phi),
    \end{align*}
    where in the second equality we have used the fact that $\phi$ is a morphism of rack objects.
\end{proof}

\begin{prop}\label{prop:yb2}
    $F$ is a fully faithful functor.
\end{prop}

\begin{proof}
    Thanks to Proposition \ref{prop:f-mor}, the only nontrivial part of the claim is fullness. Let $\phi\colon (X,r)\to(Y,s)$ be a morphism in $\YB(\C)$. We have to show that $\phi$ is also a morphism in $\Racks(\C)$. Note that
    \[
    \tr_X\circ\tau=\pi_2\circ(\pi_1,\tr_X)\circ\tau=\pi_2\circ r\colon X^2\to X
    \]
    and, similarly, $\tr_Y\circ\tau=\pi_2\circ s\colon Y^2\to Y$. Hence,
    \begin{align*}
        \phi\circ \tr_X\circ\tau&=\phi\circ\pi_2\circ r\\
        &=\pi_2\circ(\phi\times\phi)\circ r\\
        &=\pi_2\circ s\circ(\phi\times\phi)\\
        &=\tr_Y\circ\tau\circ(\phi\times\phi)\\
        &= \tr_Y\circ(\phi\times\phi)\circ\tau,
    \end{align*}
    where in the third equality we have used the fact that $\phi$ is a morphism of Yang--Baxter operators. Since $\tau\colon X^2\bij X^2$ is invertible, the claim follows.
\end{proof}

This completes the proof of Theorem \ref{thm:yb}.

\section{From racks to Yang--Baxter schemes}\label{sec:from3}

In this section, we discuss several applications of Sections \ref{sec:ybo} and \ref{sec:from2} to the category of schemes $\Schs$ over a commutative ring $S$.

\begin{definition}
    The category of \emph{Yang--Baxter schemes} over $S$ is $\YB(\Schs)$. \emph{Affine Yang--Baxter schemes} and \emph{Yang--Baxter varieties} are defined similarly.
\end{definition}

\subsection{Discussion}
Yang--Baxter schemes are of interest because they provide numerous solutions to the Yang--Baxter equations in certain categories, as we explain below. 
In particular, Yang--Baxter varieties with birational morphisms were previously studied in, for example, \citelist{\cite{etingof}\cite{adler}\cite{suris}\cite{frieden}}.

Via the functor of points approach, every (affine) Yang--Baxter scheme over $S$ can be viewed as a (representable) functor $\Algs\to \Set$ that factors through $\YB(\Set)$, the category of braided sets. In this view, nondegenerate Yang--Baxter schemes are precisely those that factor through $\nd(\Set)$, the category of nondegenerate braided sets. 
Therefore, for every Yang--Baxter scheme $(X,r)$ and every $S$-algebra $R$, the set of $R$-points is a braided set $(X(R),r_R)$. Moreover, if $(X,r)$ is nondegenerate, then so is $(X(R),r_R)$. Simply put, Yang--Baxter schemes generate braided sets from $S$-algebras.

On the other hand, if $(X,r)$ is a Yang--Baxter scheme over $S$, then let $U$ be an affine open subscheme of $X$ such that $r(U^2)\subseteq U^2$. Since $r$ restricts to an automorphism of $U^2$, the pair $(U,r|_{U^2})$ is an affine Yang--Baxter scheme. It follows from Proposition \ref{prop:yb-op} that $(\O_X(U),(r|_{U^2})^\sharp)$ is a Yang--Baxter operator in $(\Algs,\otimes_S)$, and the former is nondegenerate if and only if the latter is co-nondegenerate. Moreover, if $S=k$ is a field, then we can view the latter as an object in $\YB(\mathsf{Vect}_k,\otimes)$, that is, an $R$-matrix in the sense of \cite{kassel}*{Ch.\ VIII}. In this sense, Yang--Baxter schemes turn affine open subschemes preserved by $r$ into $R$-matrices with $S$-algebra structures.

Several months after this paper was posted to arXiv, Ma, Zhang, and Liu \cite{braces}*{Sec.\ 6} independently reintroduced affine Yang--Baxter schemes; their definition is equivalent to ours by the above discussion. Similarly to the results below, the authors showed that skew brace objects and Rota--Baxter group objects in $\Aff$ give rise to objects in $\YB(\Aff)$.

\subsection{Solutions from rack schemes}

Theorem \ref{thm:yb} states that every rack scheme $(Q,\tr)$ induces a nondegenerate Yang--Baxter scheme $(Q,r)$; this assignment defines a fully faithful functor $\Racks(\Schs)\to\nd(\Schs)$, and similarly for $\Affs$. Namely, for all $S$-algebras $R$, the rack $(Q(R),\tr_{Q(R)})$ becomes a nondegenerate braided set $(Q(R),r_R)$ via the formula
\begin{equation}\label{eq:r-rack}
    r_R(x,y)\coloneq(y,y\tr_{Q(R)} x).
\end{equation}

\subsubsection{}
By Proposition \ref{prop:yb-op}, the usual anti-equivalence of categories $\Affs\to \Algs$ descends to an anti-equivalence of categories
\[
\nd(\Affs,\times_S)\to\nd(\Algs,\otimes_S).
\]
Its composite with the functor $F$ from Theorem \ref{thm:yb} is a fully faithful contravariant functor
\[
\Racks(\Affs)\to\nd(\Algs,\otimes_S)\]
that sends affine rack schemes $(\Spec(A), \tr)$ to co-nondegenerate Yang--Baxter operators $(A,r^\sharp)$ in $\Algs$. 
By \eqref{eq:r-rack}, the structure morphism $r^\sharp\in\operatorname{Aut}(A\otimes_S A)$ is defined on elementary tensors by
\begin{equation}\label{eq:r-sharp-rack}
    r^\sharp(f\otimes g)=\tau((f\otimes 1)\nabla g)=\sum\gt\otimes f\go,
\end{equation}
where $\nabla g=\sum\go\otimes\gt$.

\subsubsection{}
To illustrate the constructions in \eqref{eq:r-rack} and \eqref{eq:r-sharp-rack}, we compute the Yang--Baxter operators induced by the rack schemes in Examples \ref{ex:perm} and \ref{ex:conj}. See also Section \ref{subsec:yb-ol}.
\begin{ex}
    If $Q$ is a permutation rack scheme over $S$ with respect to $\phi\in\operatorname{Aut}(Q)$ as in Example \ref{ex:perm}, then the corresponding nondegenerate Yang--Baxter scheme $(Q,r)$ sends every $S$-algebra $R$ to the nondegenerate braided set $(Q(R),r_R)$, where 
    \[
    r_R(x,y)=(y,\phi_R(x)).
    \]
    In particular, if $Q$ is a trivial quandle scheme, then $r=\tau$.

    Moreover, if $Q=\Spec(A)$ is affine, then \eqref{eq:r-sharp-rack} and Remark \ref{rmk:perm-nabla} show that the induced co-nondegenerate Yang--Baxter operator $(A,r^\sharp)$ in $\Algs$ is given by
    \[
    r^\sharp(f\otimes g)=\phi^\sharp(g)\otimes f.
    \]
\end{ex}

\begin{ex}\label{ex:yb-conj}
    Let $\Conj(G)$ be the conjugation quandle scheme of an algebraic group $G$ over a field $k$. The corresponding nondegenerate Yang--Baxter scheme $(G,r)$ sends every $k$-algebra $R$ to the nondegenerate braided set $(G(R),r_R)$, where 
    \[
    r_R(g,h)=(h,hgh\inv).
    \]

    If $G=\Spec(A)$ is affine, then $G$ induces a canonical Hopf algebra structure $(\Delta,\eps,S)$ on $A$. Together, \eqref{eq:r-sharp-rack} and Example \ref{ex:corack-conj} show that the induced co-nondegenerate Yang--Baxter operator $(A,r^\sharp)$ in $\Alg$ is given by
    \[
    r^\sharp(f\otimes g)=\sum \gt\otimes f\go S(g_{(3)})
    \]
    for all functions $f,g\in A$, where the right-hand side uses Sweedler notation for $\Delta$ (rather than $\nabla$).
\end{ex}

\begin{rmk}
    Since the category of affine algebraic groups $\Grp(\Aff)$ is anti-equivalent to the category of commutative Hopf algebras over $k$, Example \ref{ex:yb-conj} assigns a canonical co-nondegenerate Yang--Baxter operator $(A,r^\sharp)$ to every commutative Hopf algebra $A$. 
    
    It is interesting to compare $(A,r^\sharp)$ with the Yang--Baxter operator $(A,T)$ that Woronowicz \cite{woro} (cf.\ \cite{carter}*{Prop 4.7}) constructed in 1991. Here, $T(f\otimes g)= \sum\gt\otimes fS(\go)g_{(3)}$. (Note that $A$ need not be commutative in Woronowicz's construction.) These two Yang--Baxter operators are similar but not necessarily isomorphic, as the reader can verify in the case that $A=\O(\GL_2)$.
\end{rmk}

\begin{rmk}
    In Example \ref{ex:yb-conj}, it is natural to ask whether $(G,r)$ is a Yang--Baxter operator in the category of algebraic groups $\Grp(\Sch)$. This is equivalent to the condition that $r\colon G^2\to G^2$ is a morphism of algebraic groups, which holds if and only if $G$ is abelian. In that case, $\Conj(G)$ is a trivial quandle scheme and, accordingly, $r=\tau$. Interestingly, a similar result holds for the Yang--Baxter operator $(A,T)$ constructed in \cite{woro}.
\end{rmk}

\subsection{A note on Leibniz algebras}
Given an affine algebraic rack $(\Spec(A),\tr)$ over a field $k$ with left Leibniz algebra $\q=\Der_k(A,k)$, consider the morphism $r$ from \eqref{eq:r-rack} and its tangent map $r'\coloneq (dr)_{(e,e)}\colon \q^2\to\q^2$. 

Since $(\Spec(A),r)$ is a nondegenerate Yang--Baxter scheme, it is natural to ask whether the pair $(\q,r')$ is a nondegenerate Yang--Baxter operator in $(\leib,\oplus)$. In the remainder of this section, we show that this question has a positive answer, albeit an uninteresting one. (This does not follow from Theorem \ref{thm:main} because $r$ is not necessarily a morphism of algebraic racks.)

\subsubsection{}
In the following, identify $\q^2\bij\Der_k(A\otimes_k A,k)$
via the isomorphism
\[
(D,E)\mapsto \omega_{D,E},\qquad \omega_{D,E}(f\otimes g)\coloneq D(f)\eps(g)+\eps(f)E(g).
\]

\begin{rmk}
    The inverse of the above isomorphism is
    \[
    \omega\mapsto (D_\omega,E_\omega),\quad D_\omega(f)\coloneq\omega(f\otimes 1),\quad E_\omega\coloneq \omega(1\otimes f).
    \]
\end{rmk}

\begin{prop}
    If $\Spec(A)$ is an affine algebraic rack over $k$ with Leibniz algebra $\q$, then $r'=\tau$.
\end{prop}

\begin{proof}
    We have to show that $r'(\omega_{D,E})=\omega_{E,D}$ for all $k$-derivations $D,E\in\q$. 
    To that end, let $f,g\in A$, and write $\nabla g=\sum\go\otimes\gt$. Using the definition of $r'$, the formula \eqref{eq:r-sharp-rack}, and the $k$-linearity of $D$ and $E$, we compute
    \begin{align*}
        r'(\omega_{D,E})(f\otimes g)&=\omega_{D,E}(r^\sharp(f\otimes g))\\
        &= \omega_{D,E}\left(\sum\gt\otimes f\go\right)\\
        &=\sum D(\gt)\eps(f\go)+\sum \eps(\gt)E(f\go)\\
        &= \eps(f)\sum D(\eps(\go)\gt) +\sum\eps(\gt)E(f)\eps(\go)+\sum\eps(\gt)\eps(f)E(\go)\\
        &=\eps(f)D\left(\sum\eps(\go)\gt\right)+E(f)\eps\left(\sum\eps(\go)\gt\right)+\eps(f)E\left(\sum \go\eps(\gt)\right) \\
        &=\eps(f)D(g)+E(f)\eps(g)+\eps(f) E(\eps(g))\\
        &=\omega_{E,D}(f\otimes g)+0.
    \end{align*}
    The fourth and fifth equalities use the fact that $\eps\colon A\to k$ is a $k$-algebra homomorphism. The fourth and seventh equalities use the fact that $E$ is a $k$-derivation. The sixth equality follows after applying the co-fixing identity \ref{ax:co-fixing} to the first two summations and the co-fixedness identity \ref{ax:co-fixed} to the third summation. 
    Since $f,g\in A$ were arbitrary, the proof is complete.
\end{proof}

\section{Worked example}\label{sec:ol}
To illustrate the constructions in Theorem \ref{thm:main}, we compute the left Leibniz algebra of the omni-linear rack variety $\OL_n$ over a field $k$. This provides a class of affine algebraic racks whose Leibniz algebras are not Lie algebras. To illustrate the constructions in Section \ref{sec:from3}, we also compute the objects in $\nd(\Aff)$ and $\nd(\Alg)$ induced by $\OL_n$.

\subsection{Leibniz algebra computation}
We give an algebraic analogue of a calculation of Kinyon \cite{kinyon} from the theory of Lie racks. Recall that, as an affine variety, the omni-linear rack variety is
\[
\OL_n=\GL_n\times\mathbb{A}^n.
\]
Therefore, as a vector space over $k$, the tangent space of $\OL_n$ at the identity $e=(I_n,\mathbf{0})$ is
\[
T_e(\OL_n)=\mathfrak{gl}_n(k)\oplus k^n.
\]

\subsubsection{}
Theorem \ref{thm:main} equips the vector space of $k$-derivations $\q\coloneq \Der_k(\O(\OL_n),k)$ with a canonical left Leibniz bracket $[\cdot,\cdot]$. We show that under the identification $\q\cong T_e(\OL_n)$, this bracket agrees with that of the omni-Lie algebra $\ol_n(k)$ from \eqref{eq:olnk}.

Recall from \eqref{eq:ol-corack} that the subset $\{s_{11},s_{12},\dots,s_{nn},{\det}\inv,t_1,\dots,t_n\}$ generates $\O(\OL_n)$ as a $k$-algebra.   
When viewed as an element of $\q$, each vector $(X,v)\in T_e(\OL_n)$ acts on $\O(\OL_n)$ by
\[
(X,v)(s_{ij})= x_{ij},\quad (X,v)({\det}\inv)=-\operatorname{tr}(X),\quad (X,v)(t_k)=v_k.
\]
\begin{prop}\label{prop:leib-of-ol}
    For all $n\geq 0$, the left Leibniz algebra of the omni-linear rack variety $\OL_n$ is the omni-Lie algebra $\ol_n(k)$.
\end{prop}

\begin{proof}
By Remark \ref{rmk:recover}, the formula
\[
(A,v)\tr(B,w)=(ABA\inv,Aw)
\]
shows that the Leibniz bracket of $T_e(\OL_n)$ agrees with the commutator on $\mathfrak{gl}_n(k)\leq T_e(\OL_n)$. 

We compute the value of the Leibniz bracket on the second direct summand $k^n\leq T_e(\OL_n)$. First apply the convolution formula in Theorem \ref{thm:main} and then apply \eqref{eq:ol-nabla} to obtain
\begin{align*}
    [(X,v),(Y,w)](t_k)&=((X,v)\otimes (Y,w))(\nabla t_k)\\
    &=((X,v)\otimes (Y,w))\left(\sum^n_{i=1}s_{ki}\otimes t_i\right)\\
    &=\sum^n_{i=1}x_{ki}w_i = (Xw)_k
\end{align*}
for all $(X,v),(Y,w)\in T_e(\OL_n)$ and $1\leq k\leq n$. Hence, the Leibniz bracket of $T_e(\OL_n)$ is
\[
[(X,v),(Y,w)]=(XY-YX,Xw),
\]
which is precisely the Leibniz bracket of $\ol_n(k)$ from \eqref{eq:olnk}.
\end{proof}

\begin{rmk}
    As predicted by the last part of Theorem \ref{thm:main}, Proposition \ref{prop:leib-of-ol} recovers \cite{kinyon}*{(9)} in the case that $H=\GL_n(k)$ and $V=k^n$ when $k=\R$ or $\mathbb{C}$.
\end{rmk}

\begin{com}\label{com:rep}
    In 2024, Liu and Sheng \cite{omni} introduced \emph{omni-representations} of Leibniz algebras, which are homomorphisms into the omni-Lie algebra $\ol(V)$. In particular, every left Leibniz algebra $\q$ embeds into $\ol(\q)$ via the \emph{adjoint omni-representation} $X\mapsto ([X,\cdot],X)$; see \cite{kinyon2}*{Cor.\ 3.2} or \cite{omni}*{Prop.\ 3.1} for further discussion. Since $\ol(V)$ has the same underlying vector space as $\mathfrak{aff}(V)$ (the Lie algebra of the group of affine transformations $\mathrm{Aff}(V)$), it follows that every finite-dimensional Leibniz algebra can be realized as a matrix Leibniz algebra.

    Similarly, one can define an \emph{omni-representation} of an affine algebraic rack to be a morphism into the omni-linear rack variety $\OL_n$, which may be identified with an action on affine space $\mathbb{A}^n$ by affine transformations. Since the construction in Theorem \ref{thm:main} is functorial, Proposition \ref{prop:leib-of-ol} implies that omni-representations of affine algebraic racks induce omni-representations of their Leibniz algebras. This may allow for the development of a representation theory of affine algebraic racks that simultaneously encompasses those of affine algebraic groups (see \citelist{\cite{milne}\cite{waterhouse}} for references) and finite racks (as recently developed in \citelist{\cite{nieto}\cite{elhamdadi2}}).
\end{com}

\subsection{Yang--Baxter computations}\label{subsec:yb-ol}

By \eqref{eq:r-rack}, the omni-linear rack variety $\OL_n$ induces a nondegenerate affine Yang--Baxter variety $(\OL_n,r)$. Namely, for all $k$-algebras $R$, the induced nondegenerate braided set $(\OL_n(R),r_R)$ is given by
\[
r_R((A,v),(B,w))=((B,w),(BAB\inv,Bv)).
\]

Hence, in the notation of \eqref{eq:ol-corack}, the induced co-nondegenerate Yang--Baxter operator $(\O(\OL_n),r^\sharp)$ in $\Alg$ from \eqref{eq:r-sharp-rack} is given by
\[
r^\sharp((s_{ab}\otimes t_p)\otimes (s_{cd}\otimes t_q))=\sum_{1\leq i,j,\ell\leq n}(s_{ij}\otimes t_\ell)\otimes(s_{ab}s_{ci}s\inv_{jd}s_{q\ell}\otimes t_p),
\]
where $s\inv_{jd}\coloneq S(s_{jd})\in\O(\GL_n)$ sends $B\in\GL_n(k)$ to the $(j,d)$-entry of $B\inv$.
This is because 
\begin{align*}
    r^\sharp((s_{ab}\otimes 1)\otimes(1\otimes 1))&=(1\otimes 1)\otimes (s_{ab}\otimes 1),\\
    r^\sharp((1\otimes t_p)\otimes(1\otimes 1))&=(1\otimes 1)\otimes (1\otimes t_p),\\
    r^\sharp((1\otimes 1)\otimes(s_{cd}\otimes 1))&=\sum_{1\leq i,j\leq n}(s_{ij}\otimes 1)\otimes (s_{ci}s_{jd}\inv\otimes 1),\\
    r^\sharp((1\otimes 1)\otimes(1\otimes t_q))&=\sum^n_{\ell=1}(1\otimes t_\ell)\otimes (s_{q\ell}\otimes 1),
\end{align*}
where in the last two equations we have used the identities
\[
(BAB\inv)_{cd}=\sum_{1\leq i,j\leq n}B_{ci}A_{ij}(B\inv)_{jd},\qquad (Bv)_q=\sum^n_{\ell=1}B_{q\ell}v_\ell.
\]

\section{Open questions}\label{sec:open}
We conclude by proposing directions for further research. Let $\q$ be a Leibniz algebra.

\begin{enumerate}
    \item If $Q$ is \emph{any} algebraic rack (not necessarily affine), then does $T_eQ$ have canonical Leibniz brackets? That is, do the functors $\Rack(\Aff)\to\leib$ and $\Rack(\Aff)\to\Rleib$ from Theorem \ref{thm:main} extend to functors $\Rack(\Sch)\to\leib$ and $\Rack(\Sch)\to\Rleib$?
    \item If $G$ is an algebraic group, then its Lie algebra $\mathrm{Lie}(G)$ is isomorphic to the space of left-invariant vector fields, that is, the space of all $k$-derivations $D\colon \O(G)\to\O(G)$ that satisfy the identity $\Delta\circ D=(\mathord{\id}\otimes D)\circ\Delta$. (See, for example, \cite{milne}*{Sec.\ 10e}.) Do Leibniz algebras of affine algebraic racks have a similar interpretation?
    \item Mirroring the theory of representations of algebraic groups and comodules over commutative Hopf algebras (see, for example, \cite{milne}*{Sec.\ 4e}), develop a theory of affine representations of affine algebraic racks and ``comodules'' over corack algebras. Also, study their relationships with the induced omni-representations (see \cite{omni}) of Leibniz algebras. See Comment \ref{com:rep} for further discussion and references.
    \item Building upon the previous question, develop a cohomology theory for algebraic racks and their representations. Study its relationships with Leibniz algebra cohomology (see \cite{leibniz}), rack and quandle cohomology (see \cite{book}), and (for conjugation quandle schemes) the cohomology of algebraic groups and their representations (see \cite{milne}).
    \item More broadly, which theorems about affine algebraic groups and their Lie algebras extend to affine algebraic racks and their Leibniz algebras? (Cf.\ Remark \ref{rmk:triv}.)
    \item Determine which Leibniz algebras are \emph{algebraic} or ``integrate to an algebraic rack.'' That is, given $\q$, determine whether there exists an algebraic rack whose Leibniz algebra is $\q$.
    \item Further develop the theory of rack schemes $\Racks(\Schs)$ (as opposed to algebraic racks $\Rack(\Schs)$). In particular, generalize the results of Takahashi \citelist{\cite{takahashi}\cite{takahashi2}} from quandle varieties to rack schemes.
    \item Develop the general theory of corack algebras, perhaps in analogy to the theory of Hopf algebras. What relationships do they have with rack bialgebras (see \citelist{\cite{bardakov}\cite{alexandre}})?
    \item Racks \cite{fenn} and quandles \citelist{\cite{joyce}\cite{matveev}} were originally introduced to construct invariants of knots and 3-manifolds; see \citelist{\cite{book}\cite{quandlebook}}. In the past decade, racks (for example, \citelist{\cite{randal}\cite{shusterman}\cite{ellenberg}\cite{landesman}}) and quandles (for example, \citelist{\cite{fan}\cite{bianchi}\cite{bianchi2}\cite{takahashi1}}) have also enjoyed applications as invariants of various algebro-geometric objects. Use rack schemes to enhance these invariants, and compare them to other algebro-geometric invariants.
\end{enumerate}

\bibliographystyle{amsplain}

\begin{bibdiv}
\begin{biblist}

\bib{abchir2}{article}{
    AUTHOR = {Abchir, Hamid},
    AUTHOR = {Abid, Fatima-Ezzahrae},
    AUTHOR = {Boucetta, Mohamed},
     TITLE = {Analytic linear {L}ie rack structures on {L}eibniz algebras},
   JOURNAL = {Comm. Algebra},
  FJOURNAL = {Communications in Algebra},
    VOLUME = {48},
      YEAR = {2020},
    NUMBER = {8},
     PAGES = {3249--3267},
      ISSN = {0092-7872,1532-4125},
   MRCLASS = {17A32 (17B20 17D25 57K12)},
  review = {\MR{4115347}},
MRREVIEWER = {Ra\'ul\ Felipe},
       DOI = {10.1080/00927872.2020.1732399},
       URL = {https://doi-org.pitt.idm.oclc.org/10.1080/00927872.2020.1732399},
}

\bib{abchir}{article}{
    AUTHOR = {Abchir, Hamid},
    AUTHOR = {Abid, Fatima-Ezzahrae},
    AUTHOR = {Boucetta, Mohamed},
     TITLE = {A class of {L}ie racks associated to symmetric {L}eibniz
              algebras},
   JOURNAL = {J. Algebra Appl.},
  FJOURNAL = {Journal of Algebra and its Applications},
    VOLUME = {21},
      YEAR = {2022},
    NUMBER = {11},
     PAGES = {Paper No. 2250230, 31},
      ISSN = {0219-4988,1793-6829},
   MRCLASS = {17D25 (17B20 57K10)},
  review = {\MR{4489974}},
MRREVIEWER = {Agata\ Grazyna\ Pilitowska},
       DOI = {10.1142/S0219498822502309},
       URL = {https://doi-org.pitt.idm.oclc.org/10.1142/S0219498822502309},
}

\bib{adler}{article}{
    AUTHOR = {Adler, V. E.},
    author = {Bobenko, A. I.},
    author = {Suris, Yu.\ B.},
     TITLE = {Geometry of {Y}ang--{B}axter maps: pencils of conics and
              quadrirational mappings},
   JOURNAL = {Comm. Anal. Geom.},
  FJOURNAL = {Communications in Analysis and Geometry},
    VOLUME = {12},
      YEAR = {2004},
    NUMBER = {5},
     PAGES = {967--1007},
      ISSN = {1019-8385,1944-9992},
   MRCLASS = {14E05 (14N05 37K20)},
  review = {\MR{2103308}},
MRREVIEWER = {Massimiliano\ Mella},
       DOI = {10.4310/cag.2004.v12.n5.a1},
       URL = {https://doi-org.pitt.idm.oclc.org/10.4310/cag.2004.v12.n5.a1},
}

\bib{alexandre}{article}{
    AUTHOR = {Alexandre, C.},
    author = {Bordemann, M.},
    author = {Rivi\`ere, S.},
    author = {Wagemann, F.},
     TITLE = {Structure theory of rack-bialgebras},
   JOURNAL = {J. Gen. Lie Theory Appl.},
  FJOURNAL = {Journal of Generalized Lie Theory and Applications},
    VOLUME = {10},
      YEAR = {2016},
    NUMBER = {1},
     PAGES = {Art. ID 1000244, 20},
      ISSN = {1736-5279,1736-4337},
   MRCLASS = {16T10 (16T15)},
  review = {\MR{3652756}},
MRREVIEWER = {Mohammad\ Hassanzadeh},
       DOI = {10.4172/1736-4337.1000244},
       URL = {https://doi-org.pitt.idm.oclc.org/10.4172/1736-4337.1000244},
}

\bib{grana}{article}{
    AUTHOR = {Andruskiewitsch, Nicol\'as},
    author = {Gra\~na, Mat\'ias},
     TITLE = {From racks to pointed {H}opf algebras},
   JOURNAL = {Adv. Math.},
  FJOURNAL = {Advances in Mathematics},
    VOLUME = {178},
      YEAR = {2003},
    NUMBER = {2},
     PAGES = {177--243},
      ISSN = {0001-8708,1090-2082},
   MRCLASS = {16W30 (17B37 57M27)},
  review = {\MR{1994219}},
MRREVIEWER = {Ian\ M.\ Musson},
       DOI = {10.1016/S0001-8708(02)00071-3},
       URL = {https://doi-org.pitt.idm.oclc.org/10.1016/S0001-8708(02)00071-3},
}

\bib{leibniz}{book}{
    AUTHOR = {Ayupov, Shavkat},
    AUTHOR = {Omirov, Bakhrom},
    AUTHOR = {Rakhimov, Isamiddin},
     TITLE = {Leibniz algebras},
     SUBTITLE = {Structure and classification},
 PUBLISHER = {CRC Press, Boca Raton, FL},
      YEAR = {2020},
     PAGES = {xx+303},
      ISBN = {978-0-367-35481-7},
   MRCLASS = {17A32 (17-02)},
  REVIEW = {\MR{4579691}},
}

\bib{bardakov}{article}{
    AUTHOR = {Bardakov, V. G.},
    AUTHOR = {Kozlovskaya, T. A.},
    AUTHOR = {Talalaev, D. V.},
     TITLE = {Self-distributive algebras and bialgebras},
   JOURNAL = {Teoret. Mat. Fiz.},
  FJOURNAL = {Teoreticheskaya i Matematicheskaya Fizika},
    VOLUME = {224},
      YEAR = {2025},
    NUMBER = {1},
     PAGES = {3--21},
      ISSN = {0564-6162,2305-3135},
   MRCLASS = {17A30},
  REVIEW = {\MR{4937598}},
       DOI = {10.4213/tmf10900},
       URL = {https://doi-org.pitt.idm.oclc.org/10.4213/tmf10900},
}

\bib{baxter}{article}{
    AUTHOR = {Baxter, Rodney J.},
     TITLE = {Partition function of the eight-vertex lattice model},
   JOURNAL = {Ann. Physics},
  FJOURNAL = {Annals of Physics},
    VOLUME = {70},
      YEAR = {1972},
     PAGES = {193--228},
      ISSN = {0003-4916,1096-035X},
   MRCLASS = {82.46},
  MRNUMBER = {290733},
MRREVIEWER = {S.\ Sherman},
       DOI = {10.1016/0003-4916(72)90335-1},
       URL = {https://doi-org.pitt.idm.oclc.org/10.1016/0003-4916(72)90335-1},
}

\bib{benayadi}{article}{
    AUTHOR = {Benayadi, Sa\"id}, 
    author = {Boucetta, Mohamed},
     TITLE = {On pseudo-{E}uclidean {L}ie algebras whose {L}evi-{C}ivita
              product is left {L}eibniz},
   JOURNAL = {Differential Geom. Appl.},
  FJOURNAL = {Differential Geometry and its Applications},
    VOLUME = {87},
      YEAR = {2023},
     PAGES = {Paper No. 101986, 25},
      ISSN = {0926-2245,1872-6984},
   MRCLASS = {17B05 (17A32 17B60 22E60 53B05 53C50)},
  review = {\MR{4549483}},
MRREVIEWER = {Hicham\ Lebzioui},
       DOI = {10.1016/j.difgeo.2023.101986},
       URL = {https://doi-org.pitt.idm.oclc.org/10.1016/j.difgeo.2023.101986},
}

\bib{bianchi}{article}{
    AUTHOR = {Bianchi, Andrea},
     TITLE = {Partially multiplicative quandles and simplicial {H}urwitz
              spaces},
   JOURNAL = {Doc. Math.},
  FJOURNAL = {Documenta Mathematica},
    VOLUME = {30},
      YEAR = {2025},
    NUMBER = {3},
     PAGES = {611--672},
      ISSN = {1431-0635,1431-0643},
   MRCLASS = {55R80 (08A05 08A35 18M15 20B05 20M05)},
  REVIEW = {\MR{4916105}},
MRREVIEWER = {Awais\ Shaukat},
       DOI = {10.4171/dm/996},
       URL = {https://doi-org.pitt.idm.oclc.org/10.4171/dm/996},
}

\bib{bianchi2}{article}{
    AUTHOR = {Bianchi, Andrea},
    AUTHOR = {Miller, Jeremy},
     TITLE = {Polynomial stability of the homology of {H}urwitz spaces},
   JOURNAL = {Math. Ann.},
  FJOURNAL = {Mathematische Annalen},
    VOLUME = {391},
      YEAR = {2025},
    NUMBER = {3},
     PAGES = {4117--4144},
      ISSN = {0025-5831,1432-1807},
   MRCLASS = {20F36 (14H30 55R80 55T05 55U10 55U15)},
  REVIEW = {\MR{4865237}},
       DOI = {10.1007/s00208-024-03009-1},
       URL = {https://doi-org.pitt.idm.oclc.org/10.1007/s00208-024-03009-1},
}

\bib{bloh}{article}{
    AUTHOR = {Bloh, A.},
     TITLE = {On a generalization of the concept of {L}ie algebra},
   JOURNAL = {Dokl. Akad. Nauk SSSR},
  FJOURNAL = {Doklady Akademii Nauk SSSR},
    VOLUME = {165},
      YEAR = {1965},
     PAGES = {471--473},
      ISSN = {0002-3264},
   MRCLASS = {17.30},
  review = {\MR{193114}},
MRREVIEWER = {A.\ J.\ Coleman},
}

\bib{bordemann}{article}{
    AUTHOR = {Bordemann, Martin},
    AUTHOR = {Wagemann, Friedrich},
     TITLE = {Global integration of {L}eibniz algebras},
   JOURNAL = {J. Lie Theory},
  FJOURNAL = {Journal of Lie Theory},
    VOLUME = {27},
      YEAR = {2017},
    NUMBER = {2},
     PAGES = {555--567},
      ISSN = {0949-5932},
   MRCLASS = {17A32 (22A30 22E05)},
  review = {\MR{3578405}},
MRREVIEWER = {Guy\ Roger\ Biyogmam},
}

\bib{borel}{book}{
    AUTHOR = {Borel, Armand},
     TITLE = {Linear algebraic groups},
    SERIES = {Graduate Texts in Mathematics},
    VOLUME = {126},
   subtitle = {Second enlarged edition},
 PUBLISHER = {Springer-Verlag, New York},
      YEAR = {1991},
     PAGES = {xii+288},
      ISBN = {0-387-97370-2},
   MRCLASS = {20-01 (20Gxx)},
  review = {\MR{1102012}},
MRREVIEWER = {F.\ D.\ Veldkamp},
       DOI = {10.1007/978-1-4612-0941-6},
       URL = {https://doi-org.pitt.idm.oclc.org/10.1007/978-1-4612-0941-6},
}

\bib{brieskorn}{incollection}{
    AUTHOR = {Brieskorn, E.},
     TITLE = {Automorphic sets and braids and singularities},
 BOOKTITLE = {Braids ({S}anta {C}ruz, {CA}, 1986)},
    SERIES = {Contemp. Math.},
    VOLUME = {78},
     PAGES = {45--115},
 PUBLISHER = {Amer. Math. Soc., Providence, RI},
      YEAR = {1988},
      ISBN = {0-8218-5088-1},
   MRCLASS = {32G11 (14B05 20F36 32C40)},
  review = {\MR{975077}},
MRREVIEWER = {Mina\ Teicher},
       DOI = {10.1090/conm/078/975077},
       URL = {https://doi-org.pitt.idm.oclc.org/10.1090/conm/078/975077},
}

\bib{carter}{article}{
    AUTHOR = {Carter, J. Scott},
    author = {Crans, Alissa S.},
    author = {Elhamdadi, Mohamed},
    author = {Saito, Masahico},
     TITLE = {Cohomology of categorical self-distributivity},
   JOURNAL = {J. Homotopy Relat. Struct.},
  FJOURNAL = {Journal of Homotopy and Related Structures},
    VOLUME = {3},
      YEAR = {2008},
    NUMBER = {1},
     PAGES = {13--63},
      ISSN = {1512-2891},
   MRCLASS = {16T05 (16E40 17B55 57M27 57T05)},
  review = {\MR{2395367}},
       DOI = {10.4303/jglta/s070102},
       URL = {https://doi-org.pitt.idm.oclc.org/10.4303/jglta/s070102},
}

\bib{covez}{article}{
    AUTHOR = {Covez, Simon},
     TITLE = {The local integration of {L}eibniz algebras},
   JOURNAL = {Ann. Inst. Fourier (Grenoble)},
  FJOURNAL = {Universit\'e{} de Grenoble. Annales de l'Institut Fourier},
    VOLUME = {63},
      YEAR = {2013},
    NUMBER = {1},
     PAGES = {1--35},
      ISSN = {0373-0956,1777-5310},
   MRCLASS = {17A32 (18G60 20N99)},
  review = {\MR{3089194}},
MRREVIEWER = {Jos\'e\ Manuel\ Casas Mir\'as},
       DOI = {10.5802/aif.2754},
       URL = {https://doi-org.pitt.idm.oclc.org/10.5802/aif.2754},
}

\bib{crans}{book}{
    AUTHOR = {Crans, Alissa Susan},
     TITLE = {Lie 2-algebras},
      NOTE = {Thesis (Ph.D.)--University of California, Riverside},
 PUBLISHER = {ProQuest LLC, Ann Arbor, MI},
      YEAR = {2004},
     PAGES = {173},
      ISBN = {978-0496-88998-3},
   MRCLASS = {99-05},
  REVIEW = {\MR{2706291}},
       URL =
              {http://gateway.proquest.com.pitt.idm.oclc.org/openurl?url_ver=Z39.88-2004&rft_val_fmt=info:ofi/fmt:kev:mtx:dissertation&res_dat=xri:pqdiss&rft_dat=xri:pqdiss:3141950},
}

\bib{Datuashvili}{article}{
    AUTHOR = {Datuashvili, T.},
     TITLE = {Witt's theorem for groups with action and free {L}eibniz
              algebras},
   JOURNAL = {Georgian Math. J.},
  FJOURNAL = {Georgian Mathematical Journal},
    VOLUME = {11},
      YEAR = {2004},
    NUMBER = {4},
     PAGES = {691--712},
      ISSN = {1072-947X},
   MRCLASS = {17A32 (17B60)},
  REVIEW = {\MR{2110366}},
MRREVIEWER = {Jos\'e\ Manuel\ Casas Mir\'as},
}

\bib{dherin}{article}{
    AUTHOR = {Dherin, Benoit},
    author = {Wagemann, Friedrich},
     TITLE = {Deformation quantization of {L}eibniz algebras},
   JOURNAL = {Adv. Math.},
  FJOURNAL = {Advances in Mathematics},
    VOLUME = {270},
      YEAR = {2015},
     PAGES = {21--48},
      ISSN = {0001-8708,1090-2082},
   MRCLASS = {17A32 (17B01 17B63 22A30 53D55)},
  review = {\MR{3286529}},
MRREVIEWER = {Florian\ Sch\"atz},
       DOI = {10.1016/j.aim.2014.10.022},
       URL = {https://doi-org.pitt.idm.oclc.org/10.1016/j.aim.2014.10.022},
}

\bib{dibartolo}{article}{
    AUTHOR = {Di Bartolo, Alfonso},
    author = {La Rosa, Gianmarco},
    author = {Mancini, Manuel},
     TITLE = {Non-nilpotent {L}eibniz algebras with one-dimensional derived
              subalgebra},
   JOURNAL = {Mediterr. J. Math.},
  FJOURNAL = {Mediterranean Journal of Mathematics},
    VOLUME = {21},
      YEAR = {2024},
    NUMBER = {4},
     PAGES = {Paper No. 138, 13},
      ISSN = {1660-5446,1660-5454},
   MRCLASS = {17A32 (16W25 17B30 17B40 20M99)},
  review = {\MR{4756238}},
MRREVIEWER = {Yunhe\ Sheng},
       DOI = {10.1007/s00009-024-02679-0},
       URL = {https://doi-org.pitt.idm.oclc.org/10.1007/s00009-024-02679-0},
}

\bib{drinfeld}{incollection}{
    AUTHOR = {Drinfeld, V. G.},
     TITLE = {On some unsolved problems in quantum group theory},
 BOOKTITLE = {Quantum groups ({L}eningrad, 1990)},
    SERIES = {Lecture Notes in Math.},
    VOLUME = {1510},
     PAGES = {1--8},
 PUBLISHER = {Springer, Berlin},
      YEAR = {1992},
      ISBN = {3-540-55305-3},
   MRCLASS = {17B37 (16W30 81R50)},
  review = {\MR{1183474}},
MRREVIEWER = {Yvette\ Kosmann-Schwarzbach},
       DOI = {10.1007/BFb0101175},
       URL = {https://doi-org.pitt.idm.oclc.org/10.1007/BFb0101175},
}

\bib{elhamdadi}{article}{
    AUTHOR = {Elhamdadi, Mohamed},
    AUTHOR = {Moutuou, El-Ka\"ioum M.},
     TITLE = {Foundations of topological racks and quandles},
   JOURNAL = {J. Knot Theory Ramifications},
  FJOURNAL = {Journal of Knot Theory and its Ramifications},
    VOLUME = {25},
      YEAR = {2016},
    NUMBER = {3},
     PAGES = {1640002, 17},
      ISSN = {0218-2165,1793-6527},
   MRCLASS = {57M27 (22E41 57M25)},
  review = {\MR{3475069}},
MRREVIEWER = {Pedro\ Lopes},
       DOI = {10.1142/S0218216516400022},
       URL = {https://doi-org.pitt.idm.oclc.org/10.1142/S0218216516400022},
}

\bib{elhamdadi2}{article}{
    AUTHOR = {Elhamdadi, Mohamed},
    AUTHOR = {Moutuou, El-Ka\"ioum M.},
     TITLE = {Finitely stable racks and rack representations},
   JOURNAL = {Comm. Algebra},
  FJOURNAL = {Communications in Algebra},
    VOLUME = {46},
      YEAR = {2018},
    NUMBER = {11},
     PAGES = {4787--4802},
      ISSN = {0092-7872,1532-4125},
   MRCLASS = {20N02 (20E22 20G05 22E41 57M27)},
  review = {\MR{3864263}},
MRREVIEWER = {Zhiyun\ Cheng},
       DOI = {10.1080/00927872.2018.1455102},
       URL = {https://doi-org.pitt.idm.oclc.org/10.1080/00927872.2018.1455102},
}

\bib{quandlebook}{book}{
      author={Elhamdadi, Mohamed},
      author={Nelson, Sam},
       title={Quandles: {A}n introduction to the algebra of knots},
      series={Student Mathematical Library},
   publisher={American Mathematical Society, Providence, RI},
        date={2015},
      volume={74},
        ISBN={978-1-4704-2213-4},
         url={https://doi.org/10.1090/stml/074},
      review={\MR{3379534}},
}

\bib{mse}{misc}{    
    title={Describe the category of groups of $C$, and show that it has finite products},    
    author={Elkins, Derek},    
    note={URL: https://math.stackexchange.com/q/2593636 (version: 2018-01-05; accessed: 2025-11-26)},    
    eprint={https://math.stackexchange.com/q/2593636},    
    organization={Mathematics Stack Exchange}  
}

\bib{ellenberg}{article}{
      title={Averages of arithmetic functions over conductors of function fields}, 
      author= {Ellenberg, Jordan},
      author = {Shusterman, Mark},
      journal = {arXiv:2601.01242},
      year={2026},
}

\bib{etingof2}{article}{
    AUTHOR = {Etingof, Pavel},
     TITLE = {Geometric crystals and set-theoretical solutions to the
              quantum {Y}ang--{B}axter equation},
   JOURNAL = {Comm. Algebra},
  FJOURNAL = {Communications in Algebra},
    VOLUME = {31},
      YEAR = {2003},
    NUMBER = {4},
     PAGES = {1961--1973},
      ISSN = {0092-7872,1532-4125},
   MRCLASS = {17B37 (20C30)},
  review = {\MR{1972900}},
MRREVIEWER = {Shao-Ming\ Fei},
       DOI = {10.1081/AGB-120018516},
       URL = {https://doi-org.pitt.idm.oclc.org/10.1081/AGB-120018516},
}

\bib{etingof}{article}{
    AUTHOR = {Etingof, Pavel},
    author = {Schedler, Travis},
    author = {Soloviev, Alexandre},
     TITLE = {Set-theoretical solutions to the quantum {Y}ang--{B}axter
              equation},
   JOURNAL = {Duke Math. J.},
  FJOURNAL = {Duke Mathematical Journal},
    VOLUME = {100},
      YEAR = {1999},
    NUMBER = {2},
     PAGES = {169--209},
      ISSN = {0012-7094,1547-7398},
   MRCLASS = {16W35 (81R50)},
  review = {\MR{1722951}},
MRREVIEWER = {E.\ J.\ Taft},
       DOI = {10.1215/S0012-7094-99-10007-X},
       URL = {https://doi-org.pitt.idm.oclc.org/10.1215/S0012-7094-99-10007-X},
}

\bib{fan}{article}{
    AUTHOR = {Fan, Yu-Wei},
    AUTHOR = {Whang, Junho Peter},
     TITLE = {Stokes matrices and exceptional isomorphisms},
   JOURNAL = {Math. Ann.},
  FJOURNAL = {Mathematische Annalen},
    VOLUME = {390},
      YEAR = {2024},
    NUMBER = {3},
     PAGES = {4041--4086},
      ISSN = {0025-5831,1432-1807},
   MRCLASS = {57K20 (11D25 14L24 20F36)},
  REVIEW = {\MR{4803470}},
       DOI = {10.1007/s00208-024-02850-8},
       URL = {https://doi-org.pitt.idm.oclc.org/10.1007/s00208-024-02850-8},
}

\bib{fenn}{article}{
      author={Fenn, Roger},
      author={Rourke, Colin},
       title={Racks and links in codimension two},
        date={1992},
        ISSN={0218-2165,1793-6527},
     journal={J. Knot Theory Ramifications},
      volume={1},
      number={4},
       pages={343\ndash 406},
         doi={10.1142/S0218216592000203},
      review={\MR{1194995}},
}

\bib{frieden}{article}{
    AUTHOR = {Frieden, Gabriel},
     TITLE = {The geometric {$R$}-matrix for affine crystals of type {$A$}},
   JOURNAL = {Adv. Math.},
  FJOURNAL = {Advances in Mathematics},
    VOLUME = {392},
      YEAR = {2021},
     PAGES = {Paper No. 108000, 71},
      ISSN = {0001-8708,1090-2082},
   MRCLASS = {17B10 (14T15)},
  review = {\MR{4312861}},
MRREVIEWER = {Hideya\ Watanabe},
       DOI = {10.1016/j.aim.2021.108000},
       URL = {https://doi-org.pitt.idm.oclc.org/10.1016/j.aim.2021.108000},
}

\bib{agl}{misc}{
      author={González-Prieto, \~Angel},
      author={Martínez, Javier},
      author={Muñoz, Vicente},
       title={Representations of knot groups in $\mathrm{AGL}_{1}(\mathbb{C})$ and Alexander invariants},
        date={2025},
         url={https://arxiv.org/abs/2503.23364},
        note={Preprint, arXiv:2503.23364 [math.GT].},
}

\bib{gorbunov}{article}{
    AUTHOR = {Gorbunov, V. G.},
    author = {Korff, Ch.},
    author = {Stroppel, C.},
     TITLE = {Yang--{B}axter algebras, convolution algebras, and
              {G}rassmannians},
   JOURNAL = {Uspekhi Mat. Nauk},
  FJOURNAL = {Uspekhi Matematicheskikh Nauk},
    VOLUME = {75},
      YEAR = {2020},
    NUMBER = {5(455)},
     PAGES = {3--58},
      ISSN = {0042-1316,2305-2872},
   MRCLASS = {16T25 (14M15 16G20 81R12)},
  review = {\MR{4154847}},
       DOI = {10.4213/rm9959},
       URL = {https://doi-org.pitt.idm.oclc.org/10.4213/rm9959},
}

\bib{rack-roll}{article}{
    AUTHOR = {Grøsfjeld, Tobias},
     TITLE = {Thesaurus racks: {C}ategorizing rack objects},
   JOURNAL = {J. Knot Theory Ramifications},
  FJOURNAL = {Journal of Knot Theory and its Ramifications},
    VOLUME = {30},
      YEAR = {2021},
    NUMBER = {4},
     PAGES = {Paper No. 2150019, 18},
      ISSN = {0218-2165,1793-6527},
   MRCLASS = {18C40 (16B50 20J15 57K12)},
  review = {\MR{4272643}},
MRREVIEWER = {Markus\ Szymik},
       DOI = {10.1142/S021821652150019X},
       URL = {https://doi.org/10.1142/S021821652150019X},
}

\bib{grothendieck}{book}{
      author={Grothendieck, Alexander},
       title={Introduction to functorial algebraic geometry, part 1},
       subtitle = {Affine algebraic geometry},
      series={summer school in Buffalo},
   publisher={lecture notes by F.\ Gaeta},
        date={1973},
}

\bib{guccione}{article}{
    AUTHOR = {Guccione, Jorge A.},
    author = {Guccione, Juan J.},
    author ={Vendramin, Leandro},
     TITLE = {Yang--{B}axter operators in symmetric categories},
   JOURNAL = {Comm. Algebra},
  FJOURNAL = {Communications in Algebra},
    VOLUME = {46},
      YEAR = {2018},
    NUMBER = {7},
     PAGES = {2811--2845},
      ISSN = {0092-7872,1532-4125},
   MRCLASS = {16T25 (16T15 18D10)},
  review = {\MR{3780826}},
MRREVIEWER = {Huafeng\ Zhang},
       DOI = {10.1080/00927872.2017.1399411},
       URL = {https://doi-org.pitt.idm.oclc.org/10.1080/00927872.2017.1399411},
}

\bib{inoue}{article}{
    AUTHOR = {Inoue, Rei},
    author = {Lam, Thomas},
    author = {Pylyavskyy, Pavlo},
     TITLE = {Toric networks, geometric {$R$}-matrices and generalized
              discrete {T}oda lattices},
   JOURNAL = {Comm. Math. Phys.},
  FJOURNAL = {Communications in Mathematical Physics},
    VOLUME = {347},
      YEAR = {2016},
    NUMBER = {3},
     PAGES = {799--855},
      ISSN = {0010-3616,1432-0916},
   MRCLASS = {37K10 (14H40 17B37 17B80)},
  review = {\MR{3551255}},
       DOI = {10.1007/s00220-016-2739-z},
       URL = {https://doi-org.pitt.idm.oclc.org/10.1007/s00220-016-2739-z},
}

\bib{jiang}{article}{
    AUTHOR = {Jiang, Jun},
     TITLE = {Rota--{B}axter operators on crossed modules of {L}ie groups and
              categorical solutions of the {Y}ang--{B}axter equation},
   JOURNAL = {J. Geom. Phys.},
  FJOURNAL = {Journal of Geometry and Physics},
    VOLUME = {217},
      YEAR = {2025},
     PAGES = {Paper No. 105601, 22},
      ISSN = {0393-0440,1879-1662},
   MRCLASS = {17B38 (16T25 18G45)},
  review = {\MR{4939474}},
       DOI = {10.1016/j.geomphys.2025.105601},
       URL = {https://doi-org.pitt.idm.oclc.org/10.1016/j.geomphys.2025.105601},
}

\bib{johnstone}{book}{
    AUTHOR = {Johnstone, Peter T.},
     TITLE = {Sketches of an elephant},
    SUBTITLE = {A topos theory compendium. {V}ol. 2},
    SERIES = {Oxford Logic Guides},
    VOLUME = {44},
 PUBLISHER = {The Clarendon Press, Oxford University Press, Oxford},
      YEAR = {2002},
     PAGES = {i--xxii, 469--1089 and I1--I71},
      ISBN = {0-19-851598-7},
   MRCLASS = {18B25 (03B15 03G30 18-02 54A05)},
  review = {\MR{2063092}},
MRREVIEWER = {Colin\ McLarty},
}

\bib{joyal}{article}{
    AUTHOR = {Joyal, Andr\'e},
    author = {Street, Ross},
     TITLE = {Tortile {Y}ang--{B}axter operators in tensor categories},
   JOURNAL = {J. Pure Appl. Algebra},
  FJOURNAL = {Journal of Pure and Applied Algebra},
    VOLUME = {71},
      YEAR = {1991},
    NUMBER = {1},
     PAGES = {43--51},
      ISSN = {0022-4049,1873-1376},
   MRCLASS = {18G99 (18D10 57M25)},
  review = {\MR{1107651}},
MRREVIEWER = {Louis\ H.\ Kauffman},
       DOI = {10.1016/0022-4049(91)90039-5},
       URL = {https://doi-org.pitt.idm.oclc.org/10.1016/0022-4049(91)90039-5},
}

\bib{joyce}{article}{
      author={Joyce, David},
       title={A classifying invariant of knots, the knot quandle},
        date={1982},
        ISSN={0022-4049,1873-1376},
     journal={J. Pure Appl. Algebra},
      volume={23},
      number={1},
       pages={37\ndash 65},
         doi={10.1016/0022-4049(82)90077-9},
      review={\MR{638121}},
}

\bib{kamada}{article}{
    AUTHOR = {Kamada, Seiichi},
     TITLE = {Quandles derived from dynamical systems and subsets which are
              closed under quandle operations},
   JOURNAL = {Topology Appl.},
  FJOURNAL = {Topology and its Applications},
    VOLUME = {157},
      YEAR = {2010},
    NUMBER = {1},
     PAGES = {298--301},
      ISSN = {0166-8641,1879-3207},
   MRCLASS = {57M27 (20N99 57M25)},
  review = {\MR{2556108}},
MRREVIEWER = {Sergej\ V.\ Matveev},
       DOI = {10.1016/j.topol.2009.04.026},
       URL = {https://doi-org.pitt.idm.oclc.org/10.1016/j.topol.2009.04.026},
}

\bib{kassel}{book}{
    AUTHOR = {Kassel, Christian},
     TITLE = {Quantum groups},
    SERIES = {Graduate Texts in Mathematics},
    VOLUME = {155},
 PUBLISHER = {Springer-Verlag, New York},
      YEAR = {1995},
     PAGES = {xii+531},
      ISBN = {0-387-94370-6},
   MRCLASS = {17B37 (16W30 18D10 20F36 57M25 81R50)},
  review = {\MR{1321145}},
MRREVIEWER = {Yu.\ N.\ Bespalov},
       DOI = {10.1007/978-1-4612-0783-2},
       URL = {https://doi-org.pitt.idm.oclc.org/10.1007/978-1-4612-0783-2},
}

\bib{kinyon}{article}{,
    AUTHOR = {Kinyon, Michael K.},
     TITLE = {Leibniz algebras, {L}ie racks, and digroups},
   JOURNAL = {J. Lie Theory},
  FJOURNAL = {Journal of Lie Theory},
    VOLUME = {17},
      YEAR = {2007},
    NUMBER = {1},
     PAGES = {99--114},
      ISSN = {0949-5932},
   MRCLASS = {17A32 (20M99 22A30)},
  review = {\MR{2286884}},
MRREVIEWER = {Bogdan\ Balcerzak},
}

\bib{kinyon2}{article}{
    AUTHOR = {Kinyon, Michael K.},
    author = {Weinstein, Alan},
     TITLE = {Leibniz algebras, {C}ourant algebroids, and multiplications on
              reductive homogeneous spaces},
   JOURNAL = {Amer. J. Math.},
  FJOURNAL = {American Journal of Mathematics},
    VOLUME = {123},
      YEAR = {2001},
    NUMBER = {3},
     PAGES = {525--550},
      ISSN = {0002-9327,1080-6377},
   MRCLASS = {17A32 (17A30 53C30 58H05)},
  review = {\MR{1833152}},
MRREVIEWER = {Zhangju\ Liu},
       URL =
              {http://muse.jhu.edu.pitt.idm.oclc.org/journals/american_journal_of_mathematics/v123/123.3kinyon.pdf},
}

\bib{landesman}{article}{
      title={The stable homology of {H}urwitz modules and applications}, 
      author= {Landesman, Aaron},
      author = {Levy, Ishan},
      journal = {arXiv:2510.02068},
      year={2025},
}

\bib{larosa}{article}{
    AUTHOR = {La Rosa, Gianmarco},
    AUTHOR = {Mancini, Manuel},
     TITLE = {Two-step nilpotent {L}eibniz algebras},
   JOURNAL = {Linear Algebra Appl.},
  FJOURNAL = {Linear Algebra and its Applications},
    VOLUME = {637},
      YEAR = {2022},
     PAGES = {119--137},
      ISSN = {0024-3795,1873-1856},
   MRCLASS = {17A32 (20N99 22A30)},
  REVIEW = {\MR{4355961}},
MRREVIEWER = {Abror\ Kh.\ Khudoyberdiyev},
       DOI = {10.1016/j.laa.2021.12.013},
       URL = {https://doi-org.pitt.idm.oclc.org/10.1016/j.laa.2021.12.013},
}

\bib{larosa2}{article}{
    AUTHOR = {La Rosa, Gianmarco},
    AUTHOR = {Mancini, Manuel},
    AUTHOR = {Nagy, G\'abor P.},
     TITLE = {Isotopisms of nilpotent {L}eibniz algebras and {L}ie racks},
   JOURNAL = {Comm. Algebra},
  FJOURNAL = {Communications in Algebra},
    VOLUME = {52},
      YEAR = {2024},
    NUMBER = {9},
     PAGES = {3812--3825},
      ISSN = {0092-7872,1532-4125},
   MRCLASS = {17A32 (17A36 17B30 20M99 22A30)},
  review = {\MR{4772749}},
MRREVIEWER = {C\'andido\ M.\ Gonz\'alez},
       DOI = {10.1080/00927872.2024.2330686},
       URL = {https://doi-org.pitt.idm.oclc.org/10.1080/00927872.2024.2330686},
}

\bib{lebed}{article}{
    AUTHOR = {Lebed, Victoria},
     TITLE = {Categorical aspects of virtuality and self-distributivity},
   JOURNAL = {J. Knot Theory Ramifications},
  FJOURNAL = {Journal of Knot Theory and its Ramifications},
    VOLUME = {22},
      YEAR = {2013},
    NUMBER = {9},
     PAGES = {1350045, 32},
      ISSN = {0218-2165,1793-6527},
   MRCLASS = {20F36 (17A32 18D10 18D35 55N35 57M27)},
  review = {\MR{3105304}},
MRREVIEWER = {Jie\ Wu},
       DOI = {10.1142/S0218216513500454},
       URL = {https://doi-org.pitt.idm.oclc.org/10.1142/S0218216513500454},
}

\bib{omni}{article}{
    AUTHOR = {Liu, Zhangju},
    AUTHOR = {Sheng, Yunhe},
     TITLE = {Omni-representations of {L}eibniz algebras},
   JOURNAL = {Commun. Math. Res.},
  FJOURNAL = {Communications in Mathematical Research},
    VOLUME = {40},
      YEAR = {2024},
    NUMBER = {1},
     PAGES = {30--42},
      ISSN = {1674-5647,2707-8523},
   MRCLASS = {17A32 (17B10)},
  REVIEW = {\MR{4726266}},
MRREVIEWER = {Luisa\ M.\ Camacho},
}

\bib{loday}{article}{
    AUTHOR = {Loday, Jean-Louis},
     TITLE = {Une version non commutative des alg\`ebres de {L}ie},
     SUBTITLE = {Les
              alg\`ebres de {L}eibniz}
   JOURNAL = {Enseign. Math. (2)},
  FJOURNAL = {L'Enseignement Math\'ematique. Revue Internationale. 2e
              S\'erie},
    VOLUME = {39},
      YEAR = {1993},
    NUMBER = {3-4},
     PAGES = {269--293},
      ISSN = {0013-8584},
   MRCLASS = {19D55 (17A30 17B55 18G60)},
  review = {\MR{1252069}},
MRREVIEWER = {Jerry\ Lodder},
}

\bib{lp}{article}{
    AUTHOR = {Loday, Jean-Louis},
    AUTHOR = {Pirashvili, Teimuraz},
     TITLE = {Universal enveloping algebras of {L}eibniz algebras and
              (co)homology},
   JOURNAL = {Math. Ann.},
  FJOURNAL = {Mathematische Annalen},
    VOLUME = {296},
      YEAR = {1993},
    NUMBER = {1},
     PAGES = {139--158},
      ISSN = {0025-5831,1432-1807},
   MRCLASS = {17A30 (19D55)},
  review = {\MR{1213376}},
MRREVIEWER = {Benjamin\ David\ Enriquez},
       DOI = {10.1007/BF01445099},
       URL = {https://doi-org.pitt.idm.oclc.org/10.1007/BF01445099},
}

\bib{braces}{misc}{
      author={Ma, Jiayao},
      author={Zhang, Boran},
      author={Liu, Jiefeng},
       title={Affine {R}ota--{B}axter groups and affine skew braces},
        date={2026},
         url={https://arxiv.org/abs/2606.22954},
        note={Preprint, arXiv:2606.22954 [math.GR].},
}

\bib{matveev}{article}{
      author={Matveev, S.~Vladimir},
       title={Distributive groupoids in knot theory},
        date={1982},
        ISSN={0368-8666},
     journal={Mat. Sb. (N.S.)},
      volume={119(161)},
      number={1},
       pages={78\ndash 88, 160},
      review={\MR{672410}},
}

\bib{milne}{book}{
    AUTHOR = {Milne, J. S.},
     TITLE = {Algebraic groups},
    SERIES = {Cambridge Studies in Advanced Mathematics},
    VOLUME = {170},
      SUBTITLE = {The theory of group schemes of finite type over a field},
 PUBLISHER = {Cambridge University Press, Cambridge},
      YEAR = {2017},
     PAGES = {xvi+644},
      ISBN = {978-1-107-16748-3},
   MRCLASS = {14L15 (14-01 17B45 20-01 20G15)},
  review = {\MR{3729270}},
MRREVIEWER = {Boris\ \`E.\ Kunyavski\u i},
       DOI = {10.1017/9781316711736},
       URL = {https://doi-org.pitt.idm.oclc.org/10.1017/9781316711736},
}

\bib{ongay}{article}{
 author = {Monterde, Juan},
 author = {Ongay, Fausto},
 title = {On integral manifolds for {Leibniz} algebras},
 fjournal = {Algebra},
 journal = {Algebra},
 issn = {2314-4106},
 volume = {2},
 pages = {Art. ID 875981, 11},
 review = { Zbl 1334.17002},
 year = {2014},
 doi = {10.1155/2014/875981},
 keywords = {17A32},
 zbMATH = {6519801},
 Zbl = {1334.17002}
}

\bib{book}{book}{
      author={Nosaka, Takefumi},
       title={Quandles and topological pairs},
      series={SpringerBriefs in Mathematics},
   publisher={Springer, Singapore},
        date={2017},
        ISBN={978-981-10-6792-1; 978-981-10-6793-8},
         url={https://doi.org/10.1007/978-981-10-6793-8},
        subtitle={Symmetry, knots, and cohomology},
      review={\MR{3729413}},
}

\bib{suris}{article}{
    AUTHOR = {Papageorgiou, V. G.},
    author = {Suris, Yu.\ B.},
    author = {Tongas, A. G.},
    author = {Veselov, A. P.},
     TITLE = {On quadrirational {Y}ang--{B}axter maps},
   JOURNAL = {SIGMA Symmetry Integrability Geom. Methods Appl.},
  FJOURNAL = {SIGMA. Symmetry, Integrability and Geometry. Methods and
              Applications},
    VOLUME = {6},
      YEAR = {2010},
     PAGES = {Paper 033, 9},
      ISSN = {1815-0659},
   MRCLASS = {37J35 (16T25)},
  review = {\MR{2647312}},
MRREVIEWER = {Chengming\ Bai},
       DOI = {10.3842/SIGMA.2010.033},
       URL = {https://doi-org.pitt.idm.oclc.org/10.3842/SIGMA.2010.033},
}

\bib{randal}{incollection}{
    AUTHOR = {Randal-Williams, Oscar},
     TITLE = {Homology of {H}urwitz spaces and the {C}ohen--{L}enstra
              heuristic for function fields [after {E}llenberg, {V}enkatesh,
              and {W}esterland]},
      NOTE = {S\'eminaire Bourbaki. Vol. 2018/2019. Expos\'es 1151--1165},
   JOURNAL = {Ast\'erisque},
  FJOURNAL = {Ast\'erisque},
    NUMBER = {422},
      YEAR = {2020},
     PAGES = {Exp. No. 1164, 469--497},
      ISSN = {0303-1179,2492-5926},
      ISBN = {978-2-85629-930-2},
   MRCLASS = {14H10 (11G25 11N45 11R29 14H30 55R80)},
  REVIEW = {\MR{4224644}},
MRREVIEWER = {Adrian\ Ioan\ Zahariuc},
       DOI = {10.24033/ast},
       URL = {https://doi-org.pitt.idm.oclc.org/10.24033/ast},
}

\bib{nieto}{article}{
author = {Rodr\'{\i}guez-Nieto, Jos\'{e} Gregorio},
author = {Salazar-D\'{\i}az, Olga Patricia},
author = {Vallejos-Cifuentes, Ricardo Esteban},
author = {Vel\'{a}squez, Ra\'{u}l},
title = {Rack representations and connections with groups representations},
journal = {to appear in J. Algebra Appl.},
doi = {10.1142/S0219498826503019},

URL = {https://doi.org/10.1142/S0219498826503019},
}

\bib{rodriguez}{article}{
    AUTHOR = {Rodr\'iguez-Nieto, Jos\'e Gregorio},
    author = {Salazar-D\'iaz, Olga
              Patricia},
    author = {Vel\'asquez, Ra\'ul},
     TITLE = {Sylow-type theorems for generalized digroups},
   JOURNAL = {J. Algebra Appl.},
  FJOURNAL = {Journal of Algebra and its Applications},
    VOLUME = {22},
      YEAR = {2023},
    NUMBER = {8},
     PAGES = {Paper No. 2350162, 21},
      ISSN = {0219-4988,1793-6829},
   MRCLASS = {20A05 (20D15 20D20 20E34 20N99)},
  review = {\MR{4598667}},
MRREVIEWER = {Guangliang\ Zhang},
       DOI = {10.1142/S0219498823501621},
       URL = {https://doi-org.pitt.idm.oclc.org/10.1142/S0219498823501621},
}

\bib{ryder}{book}{
    AUTHOR = {Ryder, Hayley Jane},
     TITLE = {The structure of racks},
      NOTE = {Thesis (Ph.D.)--Warwick University. URL: https://wrap.warwick.ac.uk/id/eprint/3645/ (accessed: 2025-12-5)},
 PUBLISHER = {},
      YEAR = {1993},
     PAGES = {},
   MRCLASS = {},
  MRNUMBER = {},
       URL = {https://wrap.warwick.ac.uk/id/eprint/3645/}
}

\bib{shusterman}{article}{
    AUTHOR = {Shusterman, Mark},
     TITLE = {The tamely ramified geometric quantitative minimal
              ramification problem},
   JOURNAL = {Compos. Math.},
  FJOURNAL = {Compositio Mathematica},
    VOLUME = {160},
      YEAR = {2024},
    NUMBER = {1},
     PAGES = {21--51},
      ISSN = {0010-437X,1570-5846},
   MRCLASS = {11R58 (11R32 12F12 14D22 14F20 20F05)},
  REVIEW = {\MR{4666043}},
MRREVIEWER = {Evan\ M.\ O'Dorney},
       DOI = {10.1112/s0010437x23007510},
       URL = {https://doi-org.pitt.idm.oclc.org/10.1112/s0010437x23007510},
}

\bib{skoda}{misc}{    
    title={Search for {L}eibniz groups},    
    author={\v{S}koda, Zoran},    
    note={URL: https://www2.irb.hr/korisnici/zskoda/leibnizManifesto.pdf (accessed: 2025-11-28)},    
    year = {2008},
    eprint={https://www2.irb.hr/korisnici/zskoda/leibnizManifesto.pdf},    
}

\bib{smith}{article}{
    AUTHOR = {Smith, Jonathan D. H.},
     TITLE = {Cayley theorems for {L}oday algebras},
   JOURNAL = {Results Math.},
  FJOURNAL = {Results in Mathematics},
    VOLUME = {77},
      YEAR = {2022},
    NUMBER = {6},
     PAGES = {Paper No. 218, 59},
      ISSN = {1422-6383,1420-9012},
   MRCLASS = {20M30 (16T99 17A32)},
  review = {\MR{4483580}},
MRREVIEWER = {Yanfeng\ Luo},
       DOI = {10.1007/s00025-022-01748-8},
       URL = {https://doi-org.pitt.idm.oclc.org/10.1007/s00025-022-01748-8},
}

\bib{takahashi}{article}{
    AUTHOR = {Takahashi, Nobuyoshi},
     TITLE = {Quandle varieties, generalized symmetric spaces, and
              {$\varphi$}-spaces},
   JOURNAL = {Transform. Groups},
  FJOURNAL = {Transformation Groups},
    VOLUME = {21},
      YEAR = {2016},
    NUMBER = {2},
     PAGES = {555--576},
      ISSN = {1083-4362,1531-586X},
   MRCLASS = {14L30 (53C35)},
  REVIEW = {\MR{3492048}},
MRREVIEWER = {Jianrong\ Li},
       DOI = {10.1007/s00031-015-9351-8},
       URL = {https://doi-org.pitt.idm.oclc.org/10.1007/s00031-015-9351-8},
}

\bib{takahashi1}{article}{
    AUTHOR = {Takahashi, Nobuyoshi},
     TITLE = {Quandles associated to {G}alois covers of arithmetic schemes},
   JOURNAL = {Kyushu J. Math.},
  FJOURNAL = {Kyushu Journal of Mathematics},
    VOLUME = {73},
      YEAR = {2019},
    NUMBER = {1},
     PAGES = {145--164},
      ISSN = {1340-6116,1883-2032},
   MRCLASS = {14G32 (20N02 57K12)},
  REVIEW = {\MR{4028091}},
MRREVIEWER = {Timothy\ J.\ Ford},
       DOI = {10.2206/kyushujm.73.145},
       URL = {https://doi-org.pitt.idm.oclc.org/10.2206/kyushujm.73.145},
}

\bib{takahashi2}{article}{
    AUTHOR = {Takahashi, Nobuyoshi},
     TITLE = {Modules over geometric quandles and representations of
              {L}ie--{Y}amaguti algebras},
   JOURNAL = {J. Lie Theory},
  FJOURNAL = {Journal of Lie Theory},
    VOLUME = {31},
      YEAR = {2021},
    NUMBER = {4},
     PAGES = {897--932},
      ISSN = {0949-5932},
   MRCLASS = {22A30 (17A30 22F30)},
  REVIEW = {\MR{4327618}},
MRREVIEWER = {\'Agota\ Figula},
}

\bib{takasaki}{article}{
    AUTHOR = {Takasaki, Mituhisa},
     TITLE = {Abstraction of symmetric transformations},
   JOURNAL = {T\^ohoku Math. J.},
  FJOURNAL = {The T\^ohoku Mathematical Journal},
    VOLUME = {49},
      YEAR = {1943},
     PAGES = {145--207},
      ISSN = {0040-8735,1881-2015},
   MRCLASS = {20.0X},
  review = {\MR{21002}},
MRREVIEWER = {S.\ Kakutani},
}

\bib{uslu}{article}{
    AUTHOR = {Uslu, Enver \"Onder},
    author = {Aslan, Ahmet Faruk},
    author = {Odaba\c s, Alper},
     TITLE = {On groups with action on itself},
   JOURNAL = {Georgian Math. J.},
  FJOURNAL = {Georgian Mathematical Journal},
    VOLUME = {26},
      YEAR = {2019},
    NUMBER = {3},
     PAGES = {459--470},
      ISSN = {1072-947X,1572-9176},
   MRCLASS = {17A32 (16W99 18D35)},
  review = {\MR{4000612}},
       DOI = {10.1515/gmj-2017-0036},
       URL = {https://doi-org.pitt.idm.oclc.org/10.1515/gmj-2017-0036},
}

\bib{waterhouse}{book}{
    AUTHOR = {Waterhouse, William C.},
     TITLE = {Introduction to affine group schemes},
    SERIES = {Graduate Texts in Mathematics},
    VOLUME = {66},
 PUBLISHER = {Springer-Verlag, New York-Berlin},
      YEAR = {1979},
     PAGES = {xi+164},
      ISBN = {0-387-90421-2},
   MRCLASS = {14-01 (14Lxx 20G99)},
  review = {\MR{547117}},
MRREVIEWER = {M.\ Kh.\ Gizatullin},
}

\bib{weinstein}{article}{
    AUTHOR = {Weinstein, Alan},
     TITLE = {Omni-{L}ie algebras},
      NOTE = {Microlocal analysis of the Schr\"odinger equation and related
              topics (Japanese)},
   JOURNAL = {S\=urikaisekikenky\=usho K\={o}ky\=uroku},
  FJOURNAL = {S\=urikaisekikenky\=usho K\={o}ky\=uroku},
    NUMBER = {1176},
      YEAR = {2000},
     PAGES = {95--102},
   MRCLASS = {58H05 (17B99 53D17)},
  review = {\MR{1839613}},
}

\bib{woro}{article}{
    AUTHOR = {Woronowicz, S. L.},
     TITLE = {Solutions of the braid equation related to a {H}opf algebra},
   JOURNAL = {Lett. Math. Phys.},
  FJOURNAL = {Letters in Mathematical Physics},
    VOLUME = {23},
      YEAR = {1991},
    NUMBER = {2},
     PAGES = {143--145},
      ISSN = {0377-9017,1573-0530},
   MRCLASS = {16W30 (17B37)},
  review = {\MR{1148506}},
MRREVIEWER = {J.\ S.\ Joel},
       DOI = {10.1007/BF00703727},
       URL = {https://doi-org.pitt.idm.oclc.org/10.1007/BF00703727},
}

\bib{yang}{article}{
    AUTHOR = {Yang, C. N.},
     TITLE = {Some exact results for the many-body problem in one dimension
              with repulsive delta-function interaction},
   JOURNAL = {Phys. Rev. Lett.},
  FJOURNAL = {Physical Review Letters},
    VOLUME = {19},
      YEAR = {1967},
     PAGES = {1312--1315},
      ISSN = {0031-9007},
   MRCLASS = {81.20},
  review = {\MR{261870}},
MRREVIEWER = {S.\ Deser},
       DOI = {10.1103/PhysRevLett.19.1312},
       URL = {https://doi-org.pitt.idm.oclc.org/10.1103/PhysRevLett.19.1312},
}

\end{biblist}
\end{bibdiv}

\end{document}